\pgfplotsset{compat=1.15}
\tikzset{
    >=stealth,
    every picture/.style={thick},
    graphs/every graph/.style={empty nodes},
}
\tikzstyle{vertex}=[
\tikzstyle{printersafe}=[decoration={snake,amplitude=0pt}]
\newcommand{\supp}{\operatorname{supp}}
\newcommand{\vol}{\operatorname{vol}}
\newcommand{\pp}{\mathbb{P}}
\newcommand{\qq}{\mathbb{Q}}
\newcommand{\zz}{\mathbb{Z}}
\newcommand{\kk}{\mathbb{K}}
\DeclareMathOperator{\coeff}{coeff}
\definecolor{uuuuuu}{rgb}{0.26666666666666666,0.26666666666666666,0.26666666666666666}
  \newtheorem{theorem}{Theorem}[section]
  \newtheorem{lemma}[theorem]{Lemma}
  \newtheorem{proposition}[theorem]{Proposition}
  \newtheorem{definition}[theorem]{Definition}
  \newtheorem{example}[theorem]{Example}
  \newtheorem{question}[theorem]{Question}
\theoremstyle{remark}
\numberwithin{equation}{section}
\keywords{Calabi--Yau pairs, complexity, toric geometry, surfaces, fibrations.}
\subjclass[2020]{Primary: 14E30, 14D10; Secondary: 14M25, 14J17.}
\begin{document}

\title[Calabi--Yau pairs of complexity two]{Calabi--Yau pairs of complexity two}

\author[J.~Moraga]{Joaqu\'in Moraga}
\address{UCLA Mathematics Department, Box 951555, Los Angeles, CA 90095-1555, USA
}
\email{jmoraga@math.ucla.edu}

\author[J.I.~Y\'a\~nez]{Jose Ignacio Y\'a\~nez}
\address{UCLA Mathematics Department, Box 951555, Los Angeles, CA 90095-1555, USA
}
\email{yanez@math.ucla.edu}

\thanks{The second author was partially supported by NSF research grant DMS-2443425.}

\begin{abstract}
A Calabi--Yau pair of index one and complexity zero is toric. Furthermore, a Calabi--Yau pair of index one and complexity one is of cluster type.
In this article, we study Calabi--Yau pairs of index one and complexity two.
We develop machinery to decide whether a Calabi--Yau of complexity two is of cluster type.
This approach reduces the problem to studying del Pezzo fibrations over toric varieties. 
We apply this to the setting of Gorenstein del Pezzo surfaces of Picard rank one. We prove that such a surface $X$ is cluster type if and only if $X$ has only $A$-type singularities
and either $\vol(X)>1$ or $|X^{\rm sing}|\leq 3$.
\end{abstract}
\maketitle

\setcounter{tocdepth}{1}
\tableofcontents

\section{Introduction}

Calabi--Yau varieties are one of the three building blocks of algebraic varieties.
Calabi--Yau pairs, as their name indicates, are a logarithmic version of Calabi--Yau varieties. 
More precisely, a Calabi--Yau pair is a couple $(X,B)$ with good singularities (log canonical) and trivial curvature, i.e., $K_X+B\equiv 0$.
Calabi--Yau pairs appear naturally when deforming Calabi--Yau varieties~\cite{Gro05}.
Moreover, every Fano variety $X$ admits several Calabi--Yau pair $(X,B)$ structures.
Toric Calabi--Yau pairs\footnote{This means that $X$ is a toric variety and $B$ is the reduced sum of all the torus invariant divisors.}
are the ones that we can understand the best due to their combinatorial nature.
For a toric Calabi--Yau pair $(X,B)$, the number 
\begin{equation}\label{eq:comp}
\dim(X)+\dim_\qq{\rm Cl}_\qq(X) -|B|
\end{equation}
is equal to zero, where $|B|$ stands for the sum of the coefficients of $B$ (see, e.g.,~\cite{CLS11}).
For a Calabi--Yau pair $(X,B)$ the value in~\eqref{eq:comp} is known as the {\em complexity}, and it is denoted by $c(X,B)$.
In~\cite{BMSZ18}, the authors show that the complexity of a Calabi--Yau pair is non-negative, and whenever $c(X,B)<1$ the underlying variety $X$ is toric, although $B$ may not be torus invariant.
In~\cite{EF24}, Enwright and Figueroa show that $c(X,B)=0$ if and only if $X$ is toric and $B$ is a weighted sum of different toric boundaries $B_1,\dots,B_k$ corresponding to different maximal tori of ${\rm Aut}(X)$. The previous theorem gives us a good understanding of Calabi--Yau pairs of complexity zero and arbitrary index.
In recent years, there has been some interest in understanding how the complexity reflects the geometry of the Calabi--Yau pair, beyond the case of complexity zero. Mauri and the first author have found connections between the complexity and dual complexes~\cite{MM24}.
In the works~\cite{Mor24a,Mor24b}, the first author established a link between the complexity and existence of non-trivial birational Mori fiber spaces.

In~\cite{ELY25}, Enwright, Li, and the second author prove that a Calabi--Yau pair of complexity one $(X,B)$ satisfies that $X$ is a toric hypersurface.
Furthermore, they prove that whenever such a pair $(X,B)$ has index one it is of cluster type. Cluster type pairs are closely related to toric pairs (see Definition~\ref{def:cluster-type}).
Indeed, when $X$ is a $\qq$-factorial Fano variety and $(X,B)$ is cluster type, the open Calabi--Yau variety $U=X\setminus B$ is covered by algebraic tori up to a codimension two subset (see~\cite[Theorem 1.3.(4)]{EFM24}). The main theorem of~\cite{ELY25} is the following.

\begin{theorem}\label{thm:comp1-ct}
Let $(X,B)$ be a Calabi--Yau pair of index one and complexity one. Then $(X,B)$ is of cluster type. 
\end{theorem}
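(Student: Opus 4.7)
The plan is to argue by induction on $\dim X$ and to reduce, via the minimal model program, to a Mori fiber space structure on a crepant birational model of $(X,B)$. Since $(X,B)$ is log Calabi--Yau of index one, a crepant log-resolution can be chosen so that $B$ has reduced coefficients on all components contributing to the complexity count. Choosing a component $E$ of $B$ and running the $(K_X+B-\epsilon E)$-MMP for $0<\epsilon\ll 1$ yields a Mori fiber space $f\colon X'\to Y'$ with $(X',B')$ still log Calabi--Yau of index one. Because the complexity is a birational invariant on crepant models with the appropriate boundary, the pair $(X',B')$ has complexity one as well, and it suffices to prove the statement for this model.

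Next, I would analyze how the complexity splits between the base and the fiber of $f$. Applying the complexity inequality of \cite{BMSZ18} to the general fiber pair $(X'_\eta,B'_\eta)$ and to the base pair obtained via the canonical bundle formula, the single unit of complexity must concentrate on one side: either the generic fiber is complexity-zero (hence toric by BMSZ) and the base has complexity one, or the fiber has complexity one and the base is toric. The inductive hypothesis then makes one of these pieces cluster type and the other outright toric, and one is reduced to the following assertion: a log Calabi--Yau fibration over a toric base whose general fiber is cluster type (respectively, a cluster type base with toric general fiber) produces a cluster type pair on the total space.

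The final step is to globalize the fiberwise cluster structure. Using the index one hypothesis, one exhibits a rational multisection of $f$ compatible with the toric structures on the fibers; this multisection provides the seed from which cluster charts on $X'\setminus B'$ are built by crossing toric charts on the base with cluster charts on the fiber. Pulling back along $X\drar X'$ and checking that the birational modifications of the MMP do not destroy the cluster property completes the proof.

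The main obstacle is this globalization. Tracking how cluster charts glue across a Mori fiber space is delicate because the general fiber is typically only birationally toric, and the exceptional loci of the MMP may meet $B$ in ways that obstruct the required mutations. The index one assumption is essential at precisely this point: it provides integrality of the boundary on every crepant model and of the gluing data for the cluster charts, and indeed the analogous statement is expected to fail in higher index, so any successful proof must genuinely use $K_X+B\sim 0$ rather than just $K_X+B\equiv 0$.
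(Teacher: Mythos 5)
First, a point of reference: the paper does not prove this statement at all --- Theorem~\ref{thm:comp1-ct} is imported verbatim from~\cite{ELY25}, and the paper only records the sharper Theorem~\ref{thm:complexity-one-implies-cluster-type} (fine complexity $\overline{c}(X,S)\leq 1$ plus index one implies cluster type), again with a citation rather than a proof. So your proposal can only be measured against the cited proof and against the adjacent machinery this paper does develop (strict conic fibrations, Lemma~\ref{lem:comp-under-p1-link}, Lemma~\ref{lem:conic-fib-ct}, and the tower of Mori fiber spaces from~\cite[Theorem 5.6]{Mor24a} used in the proof of Theorem~\ref{thm:comp2-structure}). Your high-level architecture --- reduce by a crepant MMP to a Mori fiber space, induct on dimension, and propagate toricity or the cluster property along the fibration --- is indeed the right shape of argument. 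But as written there are two genuine gaps, each at a load-bearing point.

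First, the dichotomy ``either the general fiber has complexity zero or the base does'' is not correct with the invariants you are using. The complexity of the general fiber pair $(F,B_F)$ is computed with $\dim_\qq{\rm Cl}_\qq(F)$, and for a Mori fiber space of relative Picard rank one this can be much larger than $1$ (a cubic surface fiber has $\rho(F)=7$ while $\rho(X'/Y')=1$), so $c(F,B_F)$ can be large even when $c(X',B')=1$; moreover a single horizontal component of $B'$ may restrict to several components of $B_F$ by monodromy (this is exactly the obstruction isolated in condition (2) of Theorems~\ref{thm:ct-standard-model-Pic-2} and~\ref{thm:ct-standard-model-Pic-1}). To make the splitting precise you must work with the \emph{fine} complexity of a decomposition whose rank is the span of the restricted components, i.e.\ the adjunction statement of~\cite{BMSZ18}, which you invoke only by name. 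Second, and more seriously, your induction does not close: to apply the inductive hypothesis to the base you need the base pair produced by the canonical bundle formula to again be a Calabi--Yau pair of \emph{index one}, and this fails for a general Mori fiber space (the discriminant and moduli parts need not be integral). The paper controls this only in the presence of two disjoint horizontal components of $\lfloor B\rfloor$, one of which maps birationally to the base --- this is precisely the ``strict conic fibration'' hypothesis of Definition~\ref{def:strict-cf} and the content of Lemma~\ref{lem:comp-under-p1-link}(2) --- and arranging that structure on a crepant model is where the real work lies. Your final ``globalization via cluster charts over a multisection'' paragraph, which you yourself flag as the main obstacle, is not an argument: the known route (Lemma~\ref{lem:conic-fib-ct}, and the proof of Theorem~\ref{thm:ct-vs-toric-fib}) proceeds instead by an explicit relative MMP computing that the complexity of the total space of the tower drops to zero, not by gluing charts. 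Until these three points are supplied, the proposal is a plan rather than a proof.
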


Cluster type varieties play an important role among rational varieties.
On one hand, these varieties share combinatorial similarities to toric varieties. 
On the other hand, there are interesting families of
cluster type varieties.
Deciding whether a variety is of cluster type tends to be a challenging problem.
The main aim of this article is to introduce methods to decide whether a Calabi--Yau pair of complexity two is of cluster type. This method leads to new results even in classic cases. For instance, it allows us to decide when some singular surfaces are of cluster type (see Theorem~\ref{thm:dP-Gor}).

\subsection{Projective surfaces}

For a cluster type surface $X$
there is an anti-canonical cycle $B$
of rational curves.
On the other hand, in the case of smooth projective surfaces,
the existence of such anti-canonical cycle of rational curves
is equivalent to the cluster type condition:

\begin{theorem}\label{thm:ct-surf}
Let $X$ be a smooth projective surface. 
Then,  $X$ is of cluster type if and only if
we can find a cycle of rational curves $B\in |-K_X|$. 
\end{theorem}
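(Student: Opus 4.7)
The forward direction is already indicated in the paragraph preceding the theorem statement: any cluster type surface admits an anti-canonical cycle of rational curves, so I would only need to justify the reverse implication. Assume therefore that $X$ is a smooth projective surface and $B\in |-K_X|$ is a cycle of rational curves. Since $-K_X$ is effective and $(X,B)$ is a log Calabi--Yau pair with reduced boundary of maximal intersection, $X$ is rational.

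The plan is to reduce $(X,B)$ to a toric model by iteratively contracting $(-1)$-curves and tracking the cycle $B$. If $E\subset X$ is a $(-1)$-curve not contained in $B$, then $E\cdot B = E\cdot (-K_X) = 1$, so $E$ meets a unique component of $B$ transversally at one point; contracting $E$ yields a smooth surface $X'$ with $B':=\pi_*B$ again a cycle of rational curves of the same combinatorial type, still in $|-K_{X'}|$ by crepancy. If instead $E\subset B$, then $E$ is one node of the cycle with two distinct neighbors, and contracting $E$ identifies these two points, producing a cycle of length one smaller (or collapsing a two-cycle to a nodal rational curve). Iterating, the process terminates at a minimal rational surface $X_{\min}\in\{\mathbb{P}^2\}\cup\{\mathbb{F}_n\}_{n\geq 0}$, equipped with an anti-canonical cycle $B_{\min}$; all such $X_{\min}$ are toric.

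The final step is to recognize $(X,B)$ as crepantly birational to a toric pair, which is the cluster type condition. When $B_{\min}$ coincides (after an automorphism of $X_{\min}$) with the toric boundary, this is immediate: $X$ is obtained from a toric surface by a sequence of blow-ups with centers on the (iterated) toric boundary, which exhibits $(X,B)$ as a cluster type pair in the sense of Definition~\ref{def:cluster-type}. When $B_{\min}$ is a degenerate anti-canonical cycle (for instance a nodal cubic on $\mathbb{P}^2$, or a smooth member plus an extra component on $\mathbb{F}_n$), I would pass to a small crepant modification of $(X,B)$ by extracting suitable divisors so that after further MMP the resulting minimal model has cycle equal to the standard toric boundary; equivalently, one can exhibit a toric model directly by constructing an open torus inside $X\setminus B$.

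The main obstacle is the last paragraph: verifying that the degenerate minimal configurations on $\mathbb{P}^2$ and $\mathbb{F}_n$ still yield cluster type structures on $X$. This requires a careful case analysis of anti-canonical cycles on minimal rational surfaces, supplemented by the flexibility of crepant birational modifications. The combinatorial MMP reduction itself is routine; the content lies in showing that the cluster type property is robust under the choice of minimal model reached.
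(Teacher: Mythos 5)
The paper does not actually prove Theorem~\ref{thm:ct-surf}: it attributes it to Gross--Hacking--Keel, \cite[Proposition 1.3]{GHK15}, so the relevant comparison is between your sketch and their toric-model construction. Your reduction step is essentially sound as far as it goes: contracting a $(-1)$-curve $E$ with $E\not\subset B$ (which meets $B$ transversally at one interior point since $E\cdot B=-E\cdot K_X=1$) or a $(-1)$-curve $E\subset B$ preserves the anticanonical-cycle structure, and the cluster type property lifts back along these contractions by Lemma~\ref{lem:ct-higher-model}, since the blow-down extracts a canonical place in the first case and a log canonical place in the second. The forward direction also needs slightly more care than "it is indicated in the introduction" (one should use index one to get $B\in|-K_X|$ reduced, coregularity zero plus equidimensionality of the dual complex to rule out trees attached to the cycle, and log canonicity to exclude cusps and tacnodes), but that is routine.

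The genuine gap is the one you flag yourself, and it is not a loose end but the entire content of the theorem. Once you reach a relatively minimal rational surface $X_{\min}$ there are no $(-1)$-curves left to contract, so your procedure terminates on pairs such as $(\pp^2,\text{nodal cubic})$, $(\pp^2,\text{line}+\text{conic})$, or a length-two cycle on $\mathbb{F}_n$, none of which is a toric boundary; and in dimension two there are no small modifications, so "passing to a small crepant modification" is not available. To proceed one must first perform \emph{corner} (toric) blow-ups at nodes of the cycle to create \emph{interior} $(-1)$-curves pointing toward a different minimal model, and the existence of such curves after suitable corner blow-ups is precisely the nontrivial assertion of \cite[Proposition 1.3]{GHK15}. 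Your proposal asserts this step ("one can exhibit a toric model directly by constructing an open torus inside $X\setminus B$") without argument, so the proof is incomplete exactly where the difficulty lies.
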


The previous theorem was proved by Gross, Hacking, and Keel 
when studying Looijenga pairs 
in the context of Mirror Symmetry for Calabi--Yau pairs (see \cite[Proposition 1.3]{GHK15}).
As an application, one deduces that all del Pezzo surfaces of degree
at least two
and general del Pezzo surfaces of degree one are cluster type (see \cite[Theorem 2.1 and Remark 2.2]{ALP23}).  
The situation is more complicated when we consider singular surfaces.

After the case of smooth surfaces, the next simpler case
is such of Gorenstein surfaces. 
In~\cite{EFM24}, the authors prove that a cluster type surface has only $A$-type singularities.
However, Gorenstein projective surfaces with the same 
$A$-type singularities may or may not be cluster type (see Example~\ref{A-ct-nct}).
In~\cite{MZ88}, Miyanishi and Zhang classified Gorenstein del Pezzo 
surfaces of Picard rank one in 31 families. 
These surfaces play a central role in the classification 
of Gorenstein surfaces as they often appear
as the terminal model of a Minimal Model Program.
As an application of our methods to this setting, we prove the following theorem.

\begin{theorem}\label{thm:dP-Gor}
Let $X$ be a Gorenstein del Pezzo surface of Picard rank one. 
Then $X$ is of cluster type
if and only if $X$ only has $A$-type singularities
and at least one of the two following conditions hold:
\begin{itemize}
\item we have $\vol(X)>1$, or
\item we have $|X^{\rm sing}|<4$. 
\end{itemize}
\end{theorem}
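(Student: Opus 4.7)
The plan is to prove the theorem by working through the Miyanishi--Zhang classification~\cite{MZ88} of Gorenstein del Pezzo surfaces of Picard rank one into $31$ families. The ``only $A$-type singularities'' necessity is already contained in the result of~\cite{EFM24} quoted in the excerpt, so I may assume throughout that $X$ carries only singularities $A_{n_1},\dots,A_{n_k}$. Since the minimal resolution $\mu\colon Y\to X$ is crepant, $Y$ is a smooth weak del Pezzo with $K_Y^2=K_X^2$ and $\rho(Y)=1+\sum_{i=1}^{k} n_i = 10-\vol(X)$; hence $\sum_{i=1}^{k} n_i = 9-\vol(X)$, and the two numerical hypotheses jointly fail precisely when $\vol(X)=1$ and $k\geq 4$.

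For the forward direction I would exhibit an explicit cluster-type structure family by family. Any irreducible curve $B\in|-K_X|$ yields a Calabi--Yau pair with
\[
c(X,B)=\dim X+\dim_\qq\Cl_\qq(X)-|B|=2+1-1=2,
\]
placing us squarely in the setting of this paper. I would then invoke the main machinery developed in the excerpt that reduces a complexity-two cluster question to analyzing a del Pezzo fibration over a toric base: concretely, one produces a pencil on $Y$ descending to a fibration $Y\dashrightarrow\pp^1$ whose general fiber, together with a transverse toric boundary, realizes the cluster structure. Equivalently, in view of Theorem~\ref{thm:ct-surf} applied via the resolution $\mu$, it suffices to produce a cycle of rational curves in $|-K_X|$. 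For $K_X^2\geq 2$ the anticanonical system is basepoint-free and rich enough to contain reducible nodal members forming such a cycle compatibly with the $A_{n_i}$-chains; when $\vol(X)=1$ and $k\leq 3$ I would work inside the anticanonical pencil and use the small number of prescribed $(-2)$-chains on $Y$ to locate a reducible cycle member.

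For the converse, assume $\vol(X)=1$ and $k\geq 4$, so $\sum n_i=8$. The possible dual graphs of the exceptional divisor of $\mu$ are then heavily constrained, and the corresponding Miyanishi--Zhang families can be enumerated (for instance $4A_2$, $2A_1+2A_3$, $4A_1+A_4$, $8A_1$, etc.). For each such family I would argue by contradiction via the resolution: a cluster-type structure on $X$ would pull back to a cycle of rational curves on $Y$ in the class $-K_Y=\mu^*(-K_X)$ disjoint from the prescribed $(-2)$-chains. Using that $|-K_X|$ is merely a pencil with a single base point, together with the explicit list of reducible members dictated by the $A_{n_i}$-configuration, one shows that no combinatorial cycle of the required type can be realized.

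The main obstacle is precisely this converse direction. The $(-2)$-configurations on $Y$ can be large and intricate while the anticanonical pencil on $X$ is only one-dimensional, so the obstruction must combine a careful analysis of the reducible members of $|-K_X|$ with the rigidity imposed by the chains of $A$-singularities. I expect this to require a case-by-case verification across the relevant borderline Miyanishi--Zhang families rather than a single uniform estimate, and the delicate point is to rule out cluster type without accidentally ruling out the ``good'' cases covered by the forward direction.
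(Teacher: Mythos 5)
Your reduction to the minimal resolution via Theorem~\ref{thm:ct-surf} is the step that fails, and it fails in both directions. That theorem is a statement about \emph{smooth} surfaces, and the paper is explicit (see Example~\ref{A-ct-nct}) that the existence of a cycle of rational curves in $|-K_X|$ does not characterize the cluster type property once $X$ is singular: the surface $X_2$ of that example carries an irreducible nodal anticanonical curve $B_2$ in its smooth locus, yet $(X_2,B_2)$ is not of cluster type. The resolution does not rescue the argument, because $\mu\colon Y\to X$ extracts \emph{canonical} places (the $(-2)$-curves appear with coefficient $0$ in the pulled-back boundary), not log canonical places, so neither Lemma~\ref{lem:ct-under-bir-comp2} nor Lemma~\ref{lem:ct-higher-model} lets you transfer the cluster type property from $(Y,B_Y)$ down to $(X,B)$. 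Concretely, $Y(4A_2)$ and $Y(2A_4)$ are both smooth weak del Pezzos of degree one containing nodal anticanonical cycles, so your criterion cannot separate $X(4A_2)$ (not cluster type) from $X(2A_4)$ (cluster type).

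What is actually needed --- and what the paper supplies through Theorem~\ref{thm:complete-Gor-dP}, which in turn rests on the standard-model criteria of Theorems~\ref{thm:ct-standard-model-Pic-1} and~\ref{thm:ct-standard-model-Pic-2} --- is a \emph{quantitative} pair-level criterion: an irreducible nodal $B$ contained in the smooth locus gives a cluster type pair only if $\vol(X)\geq 5$; a nodal $B$ with an $A_n$ point at its node and $\vol(X)=1$ gives one only if $n\geq 4$; and so on. Your converse also starts from a false necessary condition: a cluster type pair $(X,B)$ need not have $B$ disjoint from the singular locus --- cases (3)--(5) of Theorem~\ref{thm:complete-Gor-dP} all place the node of $B$ at a singular point, and this is precisely how the paper shows $X(A_8)$, $X(2A_4)$ and $X(A_1+A_2+A_5)$ are cluster type despite having volume one. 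The true obstruction for $X(4A_2)$ and $X(2A_1+2A_3)$ is not the combinatorial non-existence of anticanonical cycles but the volume estimate $1=(B_1+B_2)^2>2B_1\cdot B_2\geq 1$ forcing $B$ to be irreducible, combined with the threshold $n\geq 4$ at the node. A further minor point: your candidate list of borderline families includes $4A_1+A_4$ and $8A_1$, which do not occur among Gorenstein del Pezzo surfaces of Picard rank one; the only two relevant families are $4A_2$ and $2A_1+2A_3$.
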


As a consequence, there are $14$ isomorphisms classes of 
Gorenstein del Pezzo surfaces of Picard rank one
which are cluster type. Among Gorenstein del Pezzo surfaces of rank one and $A$-type singularities, there are precisely two with volume one and four singular points, namely 
$X(4A_2)$ and $X(2A_1+2A_3)$. These are the only Gorenstein del Pezzo surfaces of rank one and $A$-type singularities which are not cluster type.
We expect that our method also apply to more singular Fano surfaces, however, 
a more detailed analysis of the singularities of $X$ is necessary. In Theorem~\ref{thm:complete-Gor-dP}, 
we prove a more complete statement that allows us to detect whether 
any pair $(X,B)$, with $X$ as in Theorem~\ref{thm:dP-Gor}, is of cluster type or not.
Theorem~\ref{thm:complete-Gor-dP} is more intricate and depends 
on several factors; the number of components of $B$, the volume of $X$,
and the singularities of $X$ contained in $B$. 

The main tool to prove Theorem~\ref{thm:dP-Gor} and Theorem~\ref{thm:complete-Gor-dP} 
is Theorem~\ref{thm:ct-standard-model-Pic-1} below. Theorem~\ref{thm:ct-standard-model-Pic-1} allow us to understand the cluster type condition, for pairs of complexity two, in terms of volumes and resolutions of singularities.

\subsection{Higher-dimensional Calabi--Yau pairs}
Beyond the case of complexity one, Alves da Silva, Figueroa, and the first author studied quartic surfaces $B\subset \pp^3$ for which the pair $(\pp^3,B)$ is of cluster type, i.e., $\pp^3\setminus B$ contains an algebraic torus. In~\cite{AdSFM24}, the authors classify which pairs with $c(\pp^3,B)=2$ are of cluster type.
This motivates the study of Calabi--Yau pairs of index one and complexity two in a broader context.

In this article, we study Calabi--Yau pairs $(X,B)$ of index one and complexity two.
In Example~\ref{ex:comp2-not-rat}, we show that these pairs may not be cluster type, indeed, the underlying variety $X$ may not even be rational. Our main aim is to understand whether such pair $(X,B)$ is of cluster type or not. 
Our first result is a structural theorem
for Calabi--Yau pairs of index one and complexity two.

\begin{theorem}\label{thm:comp2-structure}
Let $X$ be a Fano type variety and let $(X,B)$ be a Calabi--Yau pair
of index one and complexity two.
There exists a crepant birational map $\phi\colon (X',B')\dashrightarrow (X,B)$
satisfying one of the following.
\begin{enumerate}
    \item[(i)] There is a finite crepant morphism $(Y,B_Y)\rightarrow (X',B')$ of degree at most $2$ for which $(Y,B_Y)$ is of cluster type; or
    \item[(ii)] there are two crepant fibrations
\[
\xymatrix{ 
(X',B') \ar[r]^-{f_1} & (X'',B'') 
\ar[r]^-{f_2} & (T,B_T) 
}
\]
where $f_1$ is a composition of strict conic fibrations
and 
$f_2$ is a fibration of relative dimension two over a toric Calabi--Yau pair $(T,B_T)$.
\end{enumerate}
Furthermore, in the second case, the crepant birational map $\phi$ only extracts log canonical places of $(X,B)$.
\end{theorem}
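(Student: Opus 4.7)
The plan is to build, on a suitable small $\mathbb{Q}$-factorial dlt modification of $(X,B)$, a tower of crepant Mori fiber contractions, and to distribute the complexity $c(X,B)=2$ among the fibers and bases of this tower. Because small $\mathbb{Q}$-factorial modifications and dlt modifications are crepant and extract only log canonical places, the final assertion of the theorem is preserved automatically along the way whenever we land in case (ii).

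First, I would replace $(X,B)$ by a $\mathbb{Q}$-factorial dlt model. Using the connection between complexity and the existence of crepant birational Mori fiber spaces established in~\cite{Mor24a,Mor24b}, after a further small $\mathbb{Q}$-factorial modification one obtains a crepant Mori fiber space $g\colon (X',B')\to (Z,B_Z)$, where $B_Z$ is the discriminant divisor produced by the canonical bundle formula. The complexity decomposes in a controlled way along such crepant fibrations, essentially as $c(X',B') = c(F,B_F) + c(Z,B_Z)$ for the general fiber $(F,B_F)$, so the pair $(c(F,B_F), c(Z,B_Z))$ takes one of the values $(0,2), (1,1), (2,0)$. In the case $(0,2)$ the general fiber is toric by~\cite{BMSZ18}; since $\rho(X'/Z)=1$ and $F$ is Fano, we have $F\simeq\mathbb{P}^1$ with its standard toric boundary, so $g$ is a strict conic fibration. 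One then iterates on $(Z,B_Z)$, which is again Fano type of complexity two and of strictly smaller dimension. Iterating this step produces the composition of strict conic fibrations $f_1\colon (X',B')\to (X'',B'')$ appearing in alternative (ii).

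The iteration terminates at $(X'',B'')$ either by reaching a Mori fiber structure of type $(2,0)$, in which case the base is a toric Calabi--Yau pair $(T,B_T)$ and the resulting $f_2\colon (X'',B'')\to (T,B_T)$ has relative dimension exactly two (the lower bound comes from the complexity-two general fiber, and the upper bound from the fact that further strict conic reductions have been exhausted together with the index-one constraint), completing alternative (ii); or by reaching a pair whose only Mori fiber structures have general fiber of complexity one. In this last situation, Theorem~\ref{thm:comp1-ct} equips the general fiber with a cluster type structure, and one combines the fiberwise cluster tori with a degree at most two cover absorbing the monodromy on the base to produce a finite crepant cover $(Y,B_Y)\to (X',B')$ of degree at most two with $(Y,B_Y)$ cluster type, yielding alternative (i).

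The main obstacle is twofold. The first is keeping the entire tower inside the crepant category, so that each Mori contraction and each small $\mathbb{Q}$-factorial modification extracts only log canonical places of $(X,B)$; this uses Fano type together with $K_{X'}+B'\equiv 0$ to prevent any extraction with negative discrepancy. The second is making the complexity decomposition along crepant Mori fibrations precise, and computing $B_Z$ via the canonical bundle formula so that the integrality of coefficients needed to iterate is preserved under the index-one hypothesis. A more technical but critical point is forcing the final relative dimension of $f_2$ to equal exactly two; this follows from having exhausted the complexity-zero fiber reductions in the construction of $f_1$, combined with the numerical constraints that index one imposes on the admissible complexity-two Fano fibers.
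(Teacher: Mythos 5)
Your overall strategy (produce a tower of crepant Mori fiber spaces, force most steps to be strict conic fibrations, and terminate either on a toric base of relative codimension two or on a degree-two cover) is the same as the paper's, but the engine you use to drive it has a genuine gap. The claimed additive decomposition $c(X',B') = c(F,B_F) + c(Z,B_Z)$ along a crepant Mori fiber space is false. For example, take $X=\pp^1\times\pp^1\rightarrow Z=\pp^1$ and $B=\Delta_1+\Delta_2$ the sum of two curves of bidegree $(1,1)$: then $c(X,B)=2$, while the general fiber is $(\pp^1,\{0\}+\{\infty\})$ with complexity $0$ and the canonical bundle formula gives $(Z,B_Z)=(\pp^1,z_1+z_2)$ with complexity $0$. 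The correct statements are only inequalities under extra hypotheses on the horizontal part of $\lfloor B\rfloor$ (compare Lemma~\ref{lem:comp-under-p1-link}, which needs two disjoint horizontal components), and establishing a usable tower is precisely the content of the structural result the paper invokes from~\cite{Mor24a} (Theorem 5.6 there), which guarantees a crepant birational model admitting a tower of Mori fiber spaces of which at least $n-2$ are strict conic fibrations. Without that input, your case split into $(0,2)$, $(1,1)$, $(2,0)$ does not get off the ground.

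Two further steps would fail as written. First, in your case $(0,2)$ you deduce $F\simeq\pp^1$ from $c(F,B_F)=0$, $\rho(X'/Z)=1$ and $F$ Fano; but a toric Fano of Picard rank one can be $\pp^2$, $\pp^3$, or a weighted projective space, so the fiber need not be a conic, and even when $F\simeq\pp^1$ the fibration is strict only if $\lfloor B\rfloor$ has \emph{two disjoint} horizontal components --- a single horizontal component mapping $2$-to-$1$ to $Z$ gives the same fiber data but is not a strict conic fibration. This monodromy phenomenon is exactly what produces alternative (i), and the paper handles it by passing to the degree-two Stein cover $Y_1\rightarrow X_1$ of the \emph{base}, showing $\overline{c}(Y_1,\lfloor B_{Y_1}\rfloor)\leq 1$ so that $(Y_1,B_{Y_1})$ is of cluster type by Theorem~\ref{thm:complexity-one-implies-cluster-type}, and then lifting via Lemma~\ref{lem:conic-fib-ct}. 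Your proposed alternative --- gluing ``fiberwise cluster tori'' on complexity-one fibers into a global cluster structure --- is not a step that can be carried out as stated; cluster type is a global crepant-birational condition and does not follow from the general fiber being of cluster type. If you replace the additive decomposition by the cited tower theorem and rework the terminal case along the lines of the base-cover argument, the rest of your outline (including the bookkeeping that keeps every modification crepant and extracting only log canonical places) aligns with the paper's proof.
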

In~\cite{ELY25}, the authors show that a variety of complexity one 
is either toric, cluster type, or it admits a finite cover of degree at most $2$ which is cluster type.
Hence, pairs that satisfy Theorem~\ref{thm:comp2-structure}.(i) behave as varieties of complexity one.
The pairs as in Theorem~\ref{thm:comp2-structure}.(i) have alteration complexity zero, i.e., they can be transformed birationally into toric pairs after possibly taking a finite cover (see, e.g.,~\cite[Definition 2.21 and Theorem 1.4]{Mor24a}).
Therefore, we focus on understanding the pairs of complexity two in which Theorem~\ref{thm:comp2-structure}.(ii) holds. 

In the case that we drop the Fano type condition in Theorem~\ref{thm:comp2-structure} a similar statement holds; however, one needs to deal with some extra cases involving elliptic fibrations. 
Thus, in this work, we focus
on Calabi--Yau pairs of index one and complexity two
with ambient space a Fano type variety.
Strict conic fibrations are a special type of conic fibrations that behaves well with respect to the Calabi--Yau pair (see Definition~\ref{def:strict-cf}).
Strict conic fibrations are closely related to the concept of standard $\pp^1$-linkings (see~\cite[Definition 2.21]{FS20}).
To understand the pairs as in Theorem~\ref{thm:comp2-structure}, 
we want to relate the cluster type property of $(X',B')$
to that of $(X'',B'')$.
This is achieved through the following theorem.

\begin{theorem}\label{thm:ct-vs-toric-fib}
Let $f\colon (X,B)\rightarrow (Z,B_Z)$ be a strict conic fibration between Calabi--Yau pairs. Then, the pair $(X,B)$ is of cluster type if and only if $(Z,B_Z)$ is of cluster type.
\end{theorem}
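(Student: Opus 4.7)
The plan is to prove the two implications separately, exploiting the defining feature of a strict conic fibration $f$: it has (or, after a crepant modification, acquires) two sections contained in $B$, so that the generic fiber of the restriction $X\setminus B \to Z\setminus B_Z$ is $\mathbb{G}_m$.

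For the implication $(Z,B_Z)$ cluster type $\Rightarrow (X,B)$ cluster type, I would start from a crepant birational map $(Z',B_{Z'})\dashrightarrow (Z,B_Z)$ extracting log canonical places and producing a dense algebraic torus $T\subset Z'\setminus B_{Z'}$. Lifting this modification to $X$ by extracting the log canonical places of $(X,B)$ that dominate those extracted on $Z$ yields a compatible crepant modification $(X',B')\dashrightarrow (X,B)$ together with an induced strict conic fibration $f'\colon (X',B')\to (Z',B_{Z'})$. The restriction of $f'$ over $T$, with the two horizontal components of $B'$ removed, is a $\mathbb{G}_m$-torsor over $T$; since $\Pic(T)=0$, this torsor is trivial. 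Hence $X'\setminus B'$ contains $T\times\mathbb{G}_m\simeq \mathbb{G}_m^{\dim X}$ as a dense open subset, proving $(X,B)$ is cluster type.

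For the converse, assume $(X,B)$ is cluster type with dense torus $T_X\subset \tilde{X}\setminus \tilde{B}$ for some crepant modification $(\tilde{X},\tilde{B})\dashrightarrow (X,B)$. The aim is to produce a dense algebraic torus on a crepant modification of $(Z,B_Z)$. The two horizontal components of $B$ provide a relative $\mathbb{G}_m$-action along the fibers of $f$; I would arrange, after further crepant modification and possibly enlarging the torus, that this $\mathbb{G}_m$ sits inside $T_X$ as a one-parameter subgroup acting fiberwise with respect to $f$. Then the quotient torus $T_Z:=T_X/\mathbb{G}_m$, of dimension $\dim Z$, embeds into the Stein factorization of the induced map $\tilde{X}/\mathbb{G}_m\to Z$, realizing $T_Z$ as a dense torus inside a crepant model $(Z',B_{Z'})\dashrightarrow (Z,B_Z)$ with complement the pushforward of the boundary.

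The main obstacle is the compatibility step in the converse direction: showing that the cluster torus $T_X$ can be adjusted to contain the relative $\mathbb{G}_m$ coming from the fibration. I expect this to follow from the strictness assumption on the conic fibration, which ensures that the two horizontal divisors arise from genuine sections after a crepant modification, so that the associated one-parameter subgroup is canonically defined and can be incorporated into a toric chart on $(\tilde{X},\tilde{B})$ by a further controlled extraction of log canonical places. The key ingredients I would invoke are the compatibility between crepant modifications of the total space and the base under a Mori-type fibration, together with the interpretation of strict conic fibrations as standard $\pp^1$-linkings, which gives local product coordinates trivializing $f$ over torus-invariant opens.
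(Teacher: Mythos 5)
Your forward implication is essentially the paper's argument (Lemma~\ref{lem:conic-fib-ct}): lift the toric modification of the base, restore a Mori fiber space structure via Lemma~\ref{lem:p1-fibrations}, and observe that the resulting total space is toric. The paper verifies this last point by a complexity count, $c(Y,B_Y)=c(T',B_{T'})+\dim(Y/T')+\rho(Y/T')-|{B_Y}_{\rm hor}|=0$, rather than by trivializing a $\mathbb{G}_m$-torsor, but your route is sound provided you also check that the composite map from $(\pp^n,\Sigma^n)$ only extracts log canonical places, since the definition of cluster type is a codimension condition on the exceptional locus, not merely the existence of a dense torus in the complement of the boundary.

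The converse direction has a genuine gap, and it sits exactly where you flag it. Given a crepant model $(\tilde X,\tilde B)$ with a dense torus $T_X\subset \tilde X\setminus\tilde B$, there is no reason the rational map $\tilde X\dashrightarrow Z$ restricted to $T_X$ is compatible with any subtorus: the witness of the cluster type condition is produced with no reference to $f$, and the fiberwise $\mathbb{G}_m$ determined by $S_0$ and $S_\infty$ need not be (conjugate into) a one-parameter subgroup of $T_X$. ``Arranging'' this compatibility \emph{is} the theorem; it is not a consequence of strictness or of the standard $\pp^1$-linking structure, and without it the quotient $T_X/\mathbb{G}_m$ is not defined. The paper circumvents this entirely by replacing the torus-based definition with the MMP characterization of cluster type (Lemma~\ref{lem:MMP-char-ct}): one takes the effective divisor $E_Y$ whose contraction by a $(K_Y+B_Y+\epsilon E_Y)$-MMP yields a toric pair, pushes it forward to $E_Z$ on the base, runs a $(K_{Z''}+B_{Z''}+\epsilon E_{Z''})$-MMP with scaling of an ample divisor downstairs, and lifts each step to the total space by a two-ray game over the intermediate Fano type contractions (Lemma~\ref{lem:FT-conic-fib}). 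A negativity/base-locus argument then shows the lifted MMP contracts all of $\supp(E)$ upstairs, so the terminal total space has complexity zero and is toric, forcing the base model to be toric. If you want to salvage your quotient strategy, you would need to first prove that some cluster-type witness can be chosen $f$-equivariantly in a suitable sense, which is a statement of comparable difficulty to the theorem itself.
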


Due to Lemma~\ref{lem:ct-under-bir-comp2}, Theorem~\ref{thm:comp2-structure}, and Theorem~\ref{thm:ct-vs-toric-fib}, 
to understand the cluster type property for pairs as in Theorem~\ref{thm:comp2-structure}.(ii) 
it suffices to study families of surface pairs over toric bases.

\subsection{Surface fibrations over toric varieties} 
In what follows, we study crepant fibrations $(X,B)\rightarrow (T,B_T)$ of relative dimension two where $(T,B_T)$ is a toric Calabi--Yau pair.
We focus on the case that $(X,B)$ has index one and complexity two. 
In this direction our first result is a technical one that let us simplify the structure of the general fiber by performing some suitable birational modifications over the base. 

\begin{theorem}\label{thm:reduction-to-standard-model}
Let $(X,B)$ be a Calabi--Yau pair of index one and complexity at most two. 
Let $f\colon (X,B)\rightarrow (T,B_T)$ be a crepant fibration of relative dimension two to a toric Calabi--Yau pair $(T,B_T)$.
Then, there exists a crepant birational map $\varphi\colon (X',B') \dashrightarrow (X,B)$, only extracting log canonical places of $(X,B)$, such that the commutative diagram:
\[
\xymatrix{
 (X,B) \ar[d]_{f}  & (X',B') \ar@{-->}[l]_{\varphi} \ar[d]^{f'}\\
 (T,B_T) & \ar[l]_{g} (T',B_{T'})
}
\]
satisfies the following conditions:
\begin{enumerate}
    \item[(i)] $g$ is a crepant birational morphism between toric Calabi--Yau pairs, \item[(ii)] we have ${f'}^{-1}(B_{T'})\subset B'$,
    \item[(iii)] the fibration $X'\rightarrow T'$ has $\rho(X'/T')\leq 2$ and  no degenerate divisors, and 
    \item[(iv)] for a general fiber $F'$ of $X'\rightarrow T'$ the divisor $B'|_{F'}$ is contained in the smooth locus of $F'$.
\end{enumerate}
\end{theorem}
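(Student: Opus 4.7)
The plan is to construct $\varphi\colon (X',B')\dashrightarrow (X,B)$ and $g\colon (T',B_{T'})\to (T,B_T)$ by a sequence of four compatible modifications, each enforcing one of the conditions, using only crepant birational modifications on the total space (which extract log canonical places of $(X,B)$) and only crepant toric refinements on the base. The complexity-at-most-two hypothesis on $(X,B)$ combined with the toricity of $(T,B_T)$ supplies the control needed at each step, since the generic fiber is then a surface Calabi--Yau pair of bounded complexity.

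\emph{Step 1, condition (ii).} Decompose $B = B^h + B^v$ into horizontal and vertical parts. Crepancy $K_X+B = f^*(K_T+B_T)$ forces every prime component $D$ of $f^{-1}(B_T)$ to be a log canonical place of $(X,B)$. Extract exactly these places via a partial dlt crepant modification $(X_1,B_1)\to (X,B)$; this achieves $f_1^{-1}(B_T)\subseteq B_1$ without disturbing the fibration structure.

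\emph{Step 2, conditions (i) and (iii).} Call a vertical prime divisor $E\subset X_1$ \emph{degenerate} if $f_1(E)$ is a divisor on $T$ not contained in $B_T$. For each such $E$, refine the fan of $T$ by a toric blow-up whose exceptional divisor dominates $f_1(E)$. This produces a crepant toric morphism $(T',B_{T'})\to(T,B_T)$ (securing (i)) for which the image of $E$ is now a toric boundary component. Passing to a suitable crepant model of the normalized main component of $X_1\times_T T'$ and iterating Step~1 eliminates all degenerate divisors. Finally, run a $(K+B)$-trivial relative MMP over $T'$; this MMP is crepant, preserves the coefficient-one part of $B'$ (in particular, condition (ii) and the absence of degenerate divisors), and terminates because the generic fibers are two-dimensional. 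A direct analysis of surface Calabi--Yau pairs of complexity at most two, applied to the generic fiber, caps its Picard number by two, forcing $\rho(X'/T')\leq 2$.

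\emph{Step 3 and the main obstacle.} For condition (iv), note that any singularity of a general fiber $F'$ lying on $B'|_{F'}$ yields a log canonical singularity of $(X',B')$ by adjunction along $F'$, and hence a log canonical place; extract all such places with one final crepant modification. Because these centers project to codimension at least three on $T'$, the extraction does not touch conditions (i)--(iii) nor the factorization through $T'$. The principal obstacle is the bookkeeping in Step~2: each toric refinement of $T$ may spawn new vertical prime divisors on $X_1\times_T T'$ whose degeneracy status must be re-examined, and each crepant MMP step or lc-place extraction must be arranged so as not to contract or extract components already incorporated in the toric boundary. Termination is controlled by stratifying vertical divisors according to the dimension of their image in $T$ and by invoking termination of $(K+B)$-trivial MMPs for lc Calabi--Yau pairs in the bounded-complexity regime imposed by the hypothesis.
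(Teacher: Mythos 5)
Your overall shape (first force $f^{-1}(B_T)\subseteq B$, then kill degenerate divisors and cap the relative Picard rank, then fix the general fiber) matches the paper, but each of your three mechanisms fails at a concrete point. In Step 1, crepancy does \emph{not} force every prime component of $f^{-1}(B_T)$ to be a log canonical place of $(X,B)$: since each component $P$ of $B_T$ has coefficient one, the canonical bundle formula only guarantees that \emph{some} lc place of $(X,B)$ lies over the generic point of $P$; other components of $f^{-1}(P)$ may well have coefficient $<1$ in $B$. Extraction alone therefore cannot achieve (ii) --- the offending vertical components must be \emph{contracted}. The paper does this by raising their coefficients to one and running the resulting $(K_Y+B_Y+D_Y)$-MMP over $T$, which contracts exactly those divisors by~\cite[Lemma 2.10]{Lai11}. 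In Step 2, a toric refinement of $T$ only introduces torus-invariant exceptional divisors, so it cannot produce an exceptional divisor dominating $f_1(E)$ unless the image of $E$ is torus-invariant, which there is no reason to expect. The paper proceeds in the opposite direction: it contracts the degenerate divisors on the total space by relative MMPs, obtains the new base as the target $Z$ of a $K$-MMP to a Mori fiber space, and only \emph{a posteriori} proves that $(Z,B_Z)$ is toric, by showing $g$ extracts only lc places of $(T,B_T)$, computing $c(Z,B_Z)=0$, and invoking~\cite[Theorem 1.2]{BMSZ18}. Your assertion that the complexity bound on the generic fiber "caps its Picard number by two" is also not the actual mechanism: $\rho(X'/T')\leq 2$ comes from the Mori fiber space structure $X_2\to Z\to T$ together with the count $\rho(X/T)=2+D_X$ and the contraction of all degenerate divisors.

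Step 3 is the most serious gap. A singular point of the general fiber $F'$ lying on $B'|_{F'}$ need not be a log canonical center of $(X',B')$ --- the total pair can be klt there (e.g.\ a Du Val point of $F'$ on a fractional component of $B'$) --- so there is in general nothing to extract, and extraction of lc places cannot resolve such fiber singularities. Moreover, these points do not "project to codimension at least three on $T'$": as the fiber varies they sweep out a horizontal codimension-two locus that \emph{dominates} $T'$, so extracting a divisor over it strictly increases $\rho(X'/T')$ and destroys condition (iii). This ordering problem is exactly why the paper secures (iv) \emph{before} reducing the Picard rank: it passes to a dlt modification $(X_1,B_1)$, applies~\cite[Lemma 2.12]{GLM23} to place $B_{F_1}$ in the smooth locus of the general fiber, and then checks that the subsequent $K_{X_1}$-MMP only contracts curves of the fiber to smooth points (since contracting a curve increases log discrepancies on a canonical surface), so (iv) survives to the final model. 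As written, your proposal does not yield a correct proof.
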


Note that the fibration $f'\colon (X',B')\rightarrow (T',B_{T'})$ produced by Theorem~\ref{thm:reduction-to-standard-model} improves the properties of $f$.
More precisely, the pair $(T',B_{T'})$ is toric, the fibration $f'$ has relative dimension two, the pair $(X',B')$ has index one, and complexity at most two.
Theorem~\ref{thm:reduction-to-standard-model} introduces some new properties: Theorem~\ref{thm:reduction-to-standard-model}.(ii) gives a fibration of open Calabi--Yau varieties, Theorem~\ref{thm:reduction-to-standard-model}.(iii) let us avoid the existence of many birational contractions for $X'/T'$, and Theorem~\ref{thm:reduction-to-standard-model}.(iv) implies that $B'|_{F'}$ is a simple normal crossing divisor.
Fibrations that satisfy the properties mentioned above are called \emph{standard models over a toric base}
(see Definition~\ref{def:standard-model-over-toric-bcase}). 
In Lemma~\ref{lem:ct-under-bir-comp2}, we will show that $(X,B)$ is of cluster type if and only if $(X',B')$ is of cluster type.
Henceforth, Theorem~\ref{thm:reduction-to-standard-model} reduces the problem of deciding whether a surface fibration over a toric variety is of cluster type to the case of standard models over toric bases.
The main theorems of this article are the following two criteria to decide whether a standard model of relative dimension two over a toric base is of cluster type.
The criteria are slightly different depending on the relative Picard rank of the standard model. 
First, we present the theorem in the case of relative Picard rank two.

\begin{theorem}\label{thm:ct-standard-model-Pic-2}
Let $(X,B)$ be a Calabi-Yau pair of index one and complexity two. 
Let $f\colon (X,B)\rightarrow (T,B_T)$ be a standard model of relative dimension two and relative Picard rank two.
Let $(F,B_F)$ be the general fiber of $f$.
Then, the pair $(X,B)$ is of cluster type if and only if the following three conditions are satisfied:
\begin{enumerate}
    \item $B_F$ has a nodal point,
    \item each irreducible component of $B$ restricts to an irreducible component of $B_F $, and 
    \item a component of $B_F$ has positive self-intersection.
\end{enumerate}
\end{theorem}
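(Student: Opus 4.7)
The plan is to analyze the general fiber $(F, B_F)$ of $f$ and to globalize its cluster-type structure using the toric base $(T, B_T)$. A preliminary complexity computation---using $c(T, B_T) = 0$, $\dim X = \dim T + 2$, $\rho(X/T) = 2$, and the defining properties of a standard model over a toric base---gives $|B^{\mathrm{hor}}| = 2$ for the horizontal part of $B$. Since $(X,B)$ has index one, $B^{\mathrm{hor}} = D_1 + D_2$ is a sum of two reduced prime divisors, so $B_F = D_1|_F + D_2|_F$ is an anticanonical divisor on the surface $F$ of divisor sum $2$. Condition (2) is then the statement that each $D_i|_F$ is irreducible.

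For the forward direction, assume $(X,B)$ is cluster type. The induced crepant birational map to a toric model restricts to a general fiber, so $(F, B_F)$ is a cluster-type log Calabi--Yau surface pair. By Theorem~\ref{thm:ct-surf} applied to a smooth crepant model of $F$, the divisor $B_F$ corresponds to an anticanonical cycle of rational curves, which forces a nodal point in $B_F$ and gives (1). If some $D_i|_F$ were reducible, then the monodromy of $f$ over $T\setminus B_T$ would permute its components, obstructing a globally defined algebraic torus on $X\setminus B$, so we obtain (2). Since $B_F$ has only two irreducible components and pulls back to an anticanonical cycle on a smooth del Pezzo crepant model of $F$ (on which $(-K)^2>0$), at least one component must have positive self-intersection, yielding (3).

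For the backward direction, assume (1), (2), (3). From (1) and (2), the pair $(F, B_F)$ consists of a surface with two irreducible boundary components meeting at a node; blowing up any extra intersection points produces a smooth crepant model of $F$ carrying an anticanonical cycle of rational curves, so Theorem~\ref{thm:ct-surf} shows that $(F, B_F)$ is of cluster type. Condition (3) provides a component of positive self-intersection on $F$, which determines a Hirzebruch-type ruling. I then globalize: combining the toric structure of $(T, B_T)$ with the absence of monodromy provided by (2), the fiberwise toric completion of $(F, B_F)$---obtained by adjoining two further boundary components to produce a $4$-component toric cycle on a Hirzebruch-type surface---extends to a crepant birational map $(X,B) \dashrightarrow (X^{\mathrm{tor}}, B^{\mathrm{tor}})$ to a toric Calabi--Yau pair, exhibiting $(X,B)$ as cluster type.

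The main obstacle lies in the global extension of the fiberwise ruling: one must show that the Hirzebruch-type ruling on $F$ from (3) extends to a relative toric fibration $X \to \tilde T$ over some toric variety $\tilde T$ dominating $T$. Condition (2) is essential here, since it rules out the monodromy that would otherwise obstruct a global choice of section; condition (3) is essential for identifying the ruling in the first place. Once this relative ruling is constructed, the toric structure of $T$ combined with the fiberwise toric structure yields the desired global toric model of $(X, B)$.
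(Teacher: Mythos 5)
Your proposal has two genuine gaps, one of which is a false step at the crux of the theorem. In the forward direction you derive condition (3) from the claim that an anticanonical cycle with two components on a surface with $(-K)^2>0$ must contain a component of positive self-intersection. This is not true: on a smooth cubic surface one can have $B_F=C_1+C_2$ with $C_1^2=0$, $C_2^2=-1$ and $C_1\cdot C_2=2$, so that $(C_1+C_2)^2=3>0$ while neither component has positive self-intersection. This is precisely the situation of Example~\ref{ex:non-ct} in the paper, where the total space is \emph{not} of cluster type even though every general fiber pair is a Looijenga pair (and hence cluster type as a surface pair). The necessity of (3) is exactly where the content of the theorem lies, and it cannot be read off from the fiber alone: the paper first uses \cite[Theorem 3.4]{AdSFM24} to upgrade the cluster type hypothesis to a toric model \emph{over} $T$, extracts from this the fiberwise structure of Proposition~\ref{prop:toric-blow-up-criterion} (a toric blow-up of $F$ carrying an effective divisor of non-negative self-intersection supported on the strict transform of $B_F$, with a node outside it), and then rules out $C_1^2\leq 0$, $C_2^2\leq 0$ by the explicit intersection computation $(m_1\tilde C_1+m_2\tilde C_2)^2\leq -\tfrac{1}{\alpha\beta}(\alpha m_1-\beta m_2)^2\leq 0$. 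None of this is recoverable from Theorem~\ref{thm:ct-surf} applied to the fiber.

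The second gap is in the converse: you reduce to showing that a fiberwise Hirzebruch-type ruling extends to a relative toric fibration over a toric variety dominating $T$, and you explicitly flag this globalization as "the main obstacle" without carrying it out. The paper sidesteps this entirely. Writing $B_{\mathrm{hor}}=B_1+B_2$ with $B_1|_F=C_1$, $C_1^2>0$, it runs a $(K_X+B-\epsilon B_2)$-MMP over $T$; the outcome is either a divisorial contraction (which must contract a coefficient-zero divisor, since $C_1$ has positive self-intersection and contracting $C_2$ would violate negativity of the MMP) or a Mori fiber space over some $Z$ with both $C_1$ and $C_2$ horizontal. In either case the (fine) complexity drops to at most one, and Theorem~\ref{thm:comp1-ct} together with Lemma~\ref{lem:ct-under-bir-comp2} concludes. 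If you want to salvage your approach, you would need to actually construct the relative toric model, which amounts to redoing this MMP argument in disguise; as written, the converse is a sketch rather than a proof. Your monodromy argument for (2) and the idea behind (1) are essentially the paper's (though the paper derives (1) more directly from Theorem~\ref{thm:invariance-coreg}, since a cluster type pair has coregularity zero).
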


Secondly, we present the theorem in the case of relative Picard rank one, which can be deduced by the previous theorem by performing a blow-up at a node on the general fiber. 

\begin{theorem}\label{thm:ct-standard-model-Pic-1}
Let $(X,B)$ be a Calabi--Yau pair of index one and complexity two.
Let $f\colon (X,B)\rightarrow (T,B_T)$ be a standard model of relative dimension two and relative Picard rank one.
Let $(F,B_F)$ be the log general fiber of $f$.
Then, the pair $(X,B)$ is of cluster type if and only if the following three conditions are satisfied:
\begin{enumerate}
    \item $B_F$ has a nodal point, 
    \item each irreducible component of $B$ restricts to an irreducible component of $B_F $, and
    \item we have $\vol(F)\geq 5$.
\end{enumerate}
\end{theorem}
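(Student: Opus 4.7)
The plan is to reduce the statement to Theorem~\ref{thm:ct-standard-model-Pic-2} by extracting a divisorial log canonical place over a node of the log general fibre. First, I would set up the geometry of the fibration. Since $\rho(X/T)=1$, $\dim X=\dim T+2$, and $(T,B_T)$ is toric of complexity zero, a direct complexity count gives $|B|=|B_T|+1$; together with conditions $(\mathrm{ii})$ and $(\mathrm{iii})$ of the standard model this means $B$ has exactly one horizontal component $D$. Condition $(2)$ then forces $B_F=D|_F$ to be irreducible, and condition $(1)$ forces this irreducible anticanonical curve to have a node $p$, which lies in $F^{\mathrm{sm}}$ by property $(\mathrm{iv})$.

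Next I would construct the auxiliary fibration. Let $N\subset X$ be the closure of the nodal locus of $B$ above the generic point of $T$; then $N$ is a codimension two log canonical centre of $(X,B)$. Extracting its exceptional divisor yields a crepant birational morphism
\[
\pi\colon (X',B')\longrightarrow (X,B), \qquad B'=\tilde B+E,
\]
together with a factored fibration $f'\colon(X',B')\to(T,B_T)$ of relative dimension two and relative Picard rank two. A routine verification, using property $(\mathrm{iv})$ for $f$ to ensure that $N$ is contained in the smooth locus of $X$ along the general fibre, shows that $f'$ is itself a standard model. Moreover $(X,B)$ and $(X',B')$ are crepant birational, so by Lemma~\ref{lem:ct-under-bir-comp2} the former is of cluster type if and only if the latter is.

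I would then translate the three conditions of Theorem~\ref{thm:ct-standard-model-Pic-1} into the conditions of Theorem~\ref{thm:ct-standard-model-Pic-2} for $f'$. The log general fibre of $f'$ is $F'=\mathrm{Bl}_p F$ with $B_{F'}=\tilde B_F+E$. Since $B_F$ had a node at $p$, the exceptional curve meets $\tilde B_F$ transversally at two points, supplying the required nodes for condition $(1)$; condition $(2)$ is preserved tautologically, the new component $E$ of $B'$ restricting to the exceptional curve of $F'$. The key self-intersection computation is
\[
\tilde B_F^{\,2}=(\pi^*B_F-2E)^2=B_F^{\,2}-4=\vol(F)-4,
\]
while $E^2=-1$. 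Hence a component of $B_{F'}$ has positive self-intersection exactly when $\vol(F)>4$, which by integrality of the volume in the index one Gorenstein regime is equivalent to $\vol(F)\geq 5$. Putting the pieces together, Theorem~\ref{thm:ct-standard-model-Pic-2} gives both implications: if the three conditions hold then $(X',B')$ and therefore $(X,B)$ is of cluster type; conversely, if $(X,B)$ is of cluster type then conditions $(1)$ and $(2)$ are forced by restricting an anti-canonical cycle of rational curves on a cluster model to the log general fibre, and the construction above propagates the cluster type property to $(X',B')$, forcing $\vol(F)\geq 5$. The main obstacle I anticipate is the verification that $f'$ is genuinely a standard model in the sense of the paper: one must rule out degenerate divisors produced by the extraction of $E$, check that $B'|_{F'}$ remains in the smooth locus of $F'$, and confirm that the nodal locus $N$ extends well over the discriminant of $f$ rather than merely above the generic point of $T$.
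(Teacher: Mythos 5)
Your proposal is correct and follows essentially the same route as the paper's proof: blow up the node of $B_F$ on the general fiber to produce a relative Picard rank two standard model, compute that the strict transform of $B_F$ has self-intersection $\vol(F)-4$, and invoke Theorem~\ref{thm:ct-standard-model-Pic-2} together with Lemma~\ref{lem:ct-under-bir-comp2} in both directions. The technical points you flag at the end (that the extraction is a genuine standard model, and that conditions (1) and (2) in the converse direction are obtained as in the rank two case) are exactly the steps the paper also relies on.
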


It is a classic fact that, under some mild conditions, a del Pezzo fibration over $\pp^1$ is rational when the volume of a general fiber is at least $5$ (see, e.g.,~\cite{Pro18}).
Theorem~\ref{thm:ct-standard-model-Pic-1} provides a similar result within the theory of cluster type pairs. 
To prove Theorem~\ref{thm:ct-standard-model-Pic-2} and Theorem~\ref{thm:ct-standard-model-Pic-1}, we will use the following statement regarding the invariance of the coregularity for families of Calabi--Yau pairs.

\begin{theorem}\label{thm:invariance-coreg}
Let $f\colon (X,B)\rightarrow (T,B_T)$ be a crepant fibration to a toric Calabi--Yau pair. Then, there exists a non-trivial open subset $U\subseteq T$
for which ${\rm coreg}(X_t,B_t)={\rm coreg}(X,B)$ for all $t\in U$.
\end{theorem}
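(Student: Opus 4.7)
The plan is to reduce the statement to a local combinatorial count at a torus-fixed point of $T$, performed on a toroidal log smooth model of $(X,B)$ over $(T,B_T)$, and compare the counts on the total space and on a generic fiber.

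First I would replace $(X,B)$ by a dlt modification $\pi\colon (Y,B_Y)\to (X,B)$ so that $\mathcal{D}(X,B)=\mathcal{D}(Y,\lfloor B_Y\rfloor)$, and resolve the toric base by a smooth toric variety, which does not alter the coregularity of $(T,B_T)$. Using toroidalization (extracting only log canonical places, hence preserving the coregularity of $(X,B)$), I may assume that the induced fibration $g=f\circ\pi\colon Y\to T$ is toroidal with respect to $(\lfloor B_Y\rfloor, B_T)$. Then, at any point $y\in Y$ mapping to a torus-fixed point $t_\star=g(y)$, an étale-local model is
\[
x_1\cdots x_{k}\,y_1\cdots y_{\dim T}=0,
\]
where the $x_i$ cut out horizontal components and the $y_j$ cut out the vertical components equal to $g^{-1}(B_T)$ locally.

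Next, I choose an open subset $U\subseteq T$ contained in the big torus orbit over which $g$ is smooth and every stratum of $(Y,\lfloor B_Y\rfloor)$ is smooth and flat (after possibly shrinking). For $t\in U$, the pair $(Y_t,\lfloor B_Y\rfloor|_{Y_t})$ is snc and crepant birational to $(X_t,B_t)$, hence $\mathcal{D}(X_t,B_t)=\mathcal{D}(Y_t,\lfloor B_Y\rfloor|_{Y_t})$. Letting $k_h$ be the maximum number of horizontal components of $\lfloor B_Y\rfloor$ meeting at a single point, only horizontal components of $\lfloor B_Y\rfloor$ restrict non-trivially, so $\dim\mathcal{D}(Y_t,\lfloor B_Y\rfloor|_{Y_t})=k_h-1$ and consequently
\[
{\rm coreg}(X_t,B_t)=\dim X_t-k_h.
\]

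The crux is the equality $\dim\mathcal{D}(Y,\lfloor B_Y\rfloor)=k_h+\dim T-1$. The upper bound follows because the vertical components of $\lfloor B_Y\rfloor$ through $y\in Y$ lie in $g^{-1}(B_T)$ and, by toroidality, are in bijection with the components of $B_T$ through $g(y)$, whose count is at most $\dim T$. For the lower bound, start with a zero-dimensional stratum $y_0$ of $(Y_t,\lfloor B_Y\rfloor|_{Y_t})$ witnessing $k_h$ horizontal components, and degenerate $y_0$ along its $\dim T$-dimensional horizontal stratum to a point $y_\star$ lying over a torus-fixed point of $T$; the local toroidal model at $y_\star$ then exhibits exactly $k_h+\dim T$ components of $\lfloor B_Y\rfloor$ meeting there. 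Combining with $\dim X=\dim X_t+\dim T$ yields
\[
{\rm coreg}(X,B)=\dim X-1-(k_h+\dim T-1)=\dim X_t-k_h={\rm coreg}(X_t,B_t)
\]
for every $t\in U$. The main obstacle is the toroidalization step guaranteeing simultaneous maximality of the horizontal and vertical intersection numbers at a common point, which in turn relies crucially on the toric hypothesis on $(T,B_T)$; without it one could not ensure $\dim T$ distinct vertical reduced components of $\lfloor B_Y\rfloor$ meeting at a single point over a minimal log canonical center of the base.
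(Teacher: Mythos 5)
Your plan replaces the paper's argument (induction on $\dim T$ via adjunction to boundary divisors over toric strata, a toric $\pp^1$-fibration structure on the base, and the equidimensionality theorem for dual complexes of Calabi--Yau pairs) with a purely local toroidal count, and the count does not hold up. The decisive problem is your upper bound: the claim that, by toroidality, the vertical components of $\lfloor B_Y\rfloor$ through a point $y$ are in bijection with the components of $B_T$ through $g(y)$ is false. A toroidal morphism is only \'etale-locally an arbitrary toric morphism, not a projection; already the local model $(y_1,y_2)\mapsto y_1y_2$ over $\mathbb{A}^1$ has two vertical boundary components through the origin over a single boundary component of the base, and globally a crepant fibration can have reducible fibers over a component of $B_T$ whose pieces are all log canonical places meeting at a point. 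Ruling out the resulting ``too deep'' strata sitting over small toric strata of $T$ is exactly the hard direction of the theorem, and it is a global statement: your counting never invokes the Calabi--Yau hypothesis on $(X,B)$, whereas the paper's proof uses it essentially, through the equidimensionality of dual complexes of Calabi--Yau pairs (\cite[Theorem 1.5]{FS20}) and an analysis, via the canonical bundle formula, of a minimal log canonical center dominating the base (showing the induced pair on it is plt over $(\pp^1,\{0\}+\{\infty\})$). A local model of the stated product form with one vertical coordinate per base divisor would amount to semistable reduction, which in general requires a ramified base change and is not available here.

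There are two further gaps in the same vein. First, the toroidalization step is asserted to extract only log canonical places; but making $g$ toroidal with respect to $(\lfloor B_Y\rfloor,B_T)$ forces $g$ to be smooth on $Y\setminus\lfloor B_Y\rfloor$ over the big torus, which fails whenever the general fiber has singularities away from its boundary (a typical situation here, e.g.\ singular del Pezzo fibers), so one must enlarge the toroidal boundary beyond the log canonical places and the identification of coordinate hyperplanes with components of $\lfloor B_Y\rfloor$ breaks. Second, in the lower bound, degenerating a zero-dimensional horizontal stratum to a point $y_\star$ over a torus-fixed point only guarantees that $k_h+\dim T$ divisors \emph{contain} $y_\star$ if the chosen vertical components over distinct components of $B_T$ are distinct (degenerate divisors can be shared) and, even then, one must check these divisors cut out an actual stratum of codimension $k_h+\dim T$ at $y_\star$ rather than merely passing through it; this is where the paper instead works by adjunction to a component of $\lfloor B\rfloor$ over a toric divisor and inducts. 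I would recommend either supplying the equidimensionality input and a genuine equidimensional/semistable model, or following the paper's adjunction-and-induction route.
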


In Section~\ref{sec:ex-quest}, we show some explicit examples of 
Theorem~\ref{thm:ct-standard-model-Pic-1}
and 
Theorem~\ref{thm:ct-standard-model-Pic-2}.
Further, we show that the toricity of the base pair in
Theorem~\ref{thm:invariance-coreg} 
are indeed necessary. 

\subsection*{Acknowledgements}

The authors would like to thank Joshua Enwright and Fernando Figueroa for many useful discussions and comments.

\section{Preliminaries}

We work over an algebraically closed field $\kk$ of characteristic zero.
In this section, we recall some basic definitions and statements related to 
the complexity, cluster type pairs,
and strict conic fibrations.

\subsection{Calabi--Yau and Fano}
In this subsection, we recall some definitions regarding Calabi--Yau pairs and Fano type pairs.

\begin{definition}
{\em  
Let $X$ be a normal variety and let $X\rightarrow Z$ be a projective morphism.
We say that $X$ is of {\em Fano type} over $Z$ if there exists a boundary divisor $\Delta$ in $X$ for which $(X,\Delta)$ is klt and $-(K_X+\Delta)$ is nef and big over $Z$.
In the previous case, we also say that $X\rightarrow Z$ is a {\em Fano type morphism}.
}
\end{definition}

One of the important properties of Fano type morphisms is that they are relative Mori dream spaces. 

\begin{definition}
{\em 
Let $X$ be a normal variety and let $X\rightarrow Z$ be a projective morphism.
Let $B$ be a boundary divisor on $X$.
We say that $(X,B)$ is {\em Calabi--Yau over $Z$} if $(X,B)$ has log canonical singularities and $K_X+B\sim_{\qq,Z} 0$.
In the previous case, we may also say that $X$ is of {\em Calabi--Yau type} over $Z$.
In the case that $B$ is not a boundary divisor, we say that $(X,B)$ is a {\em Calabi--Yau sub-pair} if $(X,B)$ is a log canonical sub-pair and $K_X+B\sim_{\qq,Z} 0$.
}    
\end{definition}

\subsection{Crepant maps}
Throughout the article, we will often say that certain birational maps, finite morphisms, and fibrations are {\em crepant}.
Roughly speaking this means that the pull-back of the canonical divisor is the canonical divisor. We explain the details below in the following three definitions.

\begin{definition}
{\em
Let $\phi\colon Y\dashrightarrow X$ be a birational map between normal quasi-projective varieties. Let $B$ be a divisor on $X$. 
Let $p\colon Z\rightarrow Y$ and $q\colon Y\rightarrow X$ be a resolution of the indeterminancy of $\phi$. 
Let $B_Y$ be the unique divisor on $Y$ that satisfies:
\[
p^*(K_Y+B_Y)=q^*(K_X+B),
\]
where the previous holds as equality of divisors. 
We say that $(Y,B_Y)$ is the {\em pull-back} of the pair $(X,B)$ to $Y$.
In this case, we also say that $\phi\colon (Y,B_Y)\dashrightarrow (X,B)$ is a {\em crepant birational map}.
}
\end{definition}

\begin{definition}
{\em 
Let $f\colon Y\rightarrow X$ be a finite morphism between normal varieties.
Let $B_Y$ and $B$ be divisors on $Y$ and $X$, respectively.
We say that $f\colon (Y,B_Y)\rightarrow (X,B)$ is a {\em finite crepant morphism}
if $f^*(K_X+B)=K_Y+B_Y$ holds.
}    
\end{definition}

\begin{definition}
{\em 
Let $f\colon X\rightarrow Y$ be a fibration between normal varieties.
Let $(X,B)$ be a Calabi--Yau sub-pair over $Y$. Let $(Y,B_Y)$ be the pair induced by the canonical bundle formula on $Y$. 
Then, we say that $f\colon (X,B)\rightarrow (Y,B_Y)$ is a {\em crepant fibration}.
}
\end{definition}

\subsection{Cluster type pairs} 
In this subsection, we introduce the concept of cluster type pairs and recall some of its main properties.

\begin{definition}\label{def:cluster-type}
{\em 
Let $(X,B)$ be a Calabi--Yau pair.
We say that $(X,B)$ is of {\em cluster type} if there exists a crepant birational map
\[
\phi\colon (\pp^n,H_0+\dots+H_n) \dashrightarrow (X,B)
\]
such that 
\begin{equation}\label{eq:ct-equation} 
{\rm codim}_{\pp^n}\left(
\overline{\mathbb{G}_m^n\cap {\rm Ex}(\phi)}\right) \geq 2. 
\end{equation} 
To simplify the notation, for $n\geq 1$, we set $\Sigma^n:=H_0+\dots+H_n$ for the sum of the coordinate hyperplanes in the projective space of dimension $n$.
}
\end{definition}

For instance, any toric Calabi--Yau pair 
is of cluster type.
In the toric case, we have $X\setminus B=\mathbb{G}_m^n$.
In the case that $X$ is a $n$-dimensional $\qq$-factorial Fano variety and $(X,B)$ is of cluster type, the open subvariety $X\setminus B$ is covered by $n$-dimensional algebraic tori up to a subset of codimension at least two (see, e.g.,~\cite[Theorem 1.3.(4)]{EFM24}).
Below, we recall some properties of cluster type pairs under birational transformations.

\begin{lemma}\label{lem:ct-higher-model}
Let $(X,B)$ be a cluster type pair.
Let $\psi\colon (Y,B_Y)\rightarrow (X,B)$ be a crepant birational morphism that only extracts log canonical places and canonical places of $(X,B)$.
Then, the pair $(Y,B_Y)$ is also of cluster type.
\end{lemma}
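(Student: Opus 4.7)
The plan is to compose the given crepant birational map witnessing the cluster type structure on $(X,B)$ with the inverse of $\psi$, and verify that the resulting map still satisfies the cluster type criterion for $(Y,B_Y)$. First I would take the crepant birational map $\phi\colon(\pp^n,\Sigma^n)\dashrightarrow(X,B)$ provided by Definition~\ref{def:cluster-type}, and set $\phi':=\psi^{-1}\circ\phi\colon\pp^n\dashrightarrow Y$. Since $\psi$ is a birational morphism, $\psi^{-1}$ is a well-defined birational map and so $\phi'$ is birational as well.

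Next, I would verify that $\phi'$ defines a crepant birational map to $(Y,B_Y)$. On a common resolution $p\colon W\to\pp^n$, $q\colon W\to Y$, setting $r:=\psi\circ q\colon W\to X$, one has $p^*(K_{\pp^n}+\Sigma^n)=r^*(K_X+B)$ by the crepancy of $\phi$, while $r^*(K_X+B)=q^*\psi^*(K_X+B)=q^*(K_Y+B_Y)$ using $\psi^*(K_X+B)=K_Y+B_Y$. The hypothesis that $\psi$ extracts only log canonical and canonical places of $(X,B)$ ensures that the coefficients of $B_Y$ lie in $[0,1]$, so $(Y,B_Y)$ is indeed a Calabi--Yau pair.

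The essential step is the codimension condition
\[
\codim_{\pp^n}\bigl(\overline{\mathbb{G}_m^n\cap\mathrm{Ex}(\phi')}\bigr)\geq 2.
\]
For this, the key observation is that every $\phi'$-exceptional prime divisor of $\pp^n$ is already $\phi$-exceptional. Indeed, suppose $D\subset\pp^n$ is a prime divisor with $\phi(D)$ a prime divisor $D_X\subset X$. Because $\psi\colon Y\to X$ is a birational morphism, $D_X$ has a unique prime strict transform $D_Y\subset Y$, and on a common resolution the image of the strict transform of $D$ in $Y$ coincides with $D_Y$. Hence $D$ is not $\phi'$-exceptional, so $\mathrm{Ex}(\phi')\subseteq\mathrm{Ex}(\phi)$ as divisorial loci in $\pp^n$. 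Intersecting with $\mathbb{G}_m^n$ and taking closures preserves this inclusion, and the assumption on $\phi$ delivers the required codimension bound.

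The only point that requires care is the inclusion $\mathrm{Ex}(\phi')\subseteq\mathrm{Ex}(\phi)$; this is where it is crucial that $\psi$ is a morphism rather than a general birational map, since a birational morphism cannot convert a prime divisor on $X$ into a higher-codimension locus on $Y$. Everything else then reduces to applying Definition~\ref{def:cluster-type} to $\phi'\colon(\pp^n,\Sigma^n)\dashrightarrow(Y,B_Y)$.
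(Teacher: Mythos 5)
Your proposal is correct and follows essentially the same route as the paper: compose the cluster-type witness $\phi\colon(\pp^n,\Sigma^n)\dashrightarrow(X,B)$ with $\psi^{-1}$ and observe that, since $\psi$ is a birational morphism, no new divisorial exceptional locus meeting the torus is created, so the codimension condition persists. Your write-up merely makes explicit the crepancy check and the inclusion $\mathrm{Ex}(\phi')\subseteq\mathrm{Ex}(\phi)$ that the paper leaves implicit.
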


\begin{proof}
Assume that $(X,B)$ is of cluster type.
By the condition on the extraction of $(Y,B_Y)\rightarrow (X,B)$, we know that $(Y,B_Y)$ is a Calabi--Yau pair.
Since $(X,B)$ is of cluster type, there exists a crepant birational map $\phi\colon (\pp^n,\Sigma^n)\dashrightarrow (X,B)$
such that inequality~\eqref{eq:ct-equation} holds. 
Then, the crepant birational map
\[
\xymatrix{
(\pp^n,\Sigma^n)\ar@{-->}[r]^-{\phi} & 
(X,B)\ar@{-->}[r]^-{\psi^{-1}} & 
(Y,B_Y) 
}
\]
satisfies that
\[
{\rm codim}_{\pp^n}
\left(
\overline{  
\mathbb{G}_m^n
\cap
{\rm Ex}(\psi^{-1}\cap \phi) 
}
\right) 
\geq 2.
\]
This implies that $(Y,B_Y)$ is a cluster type pair. 
\end{proof}

The following lemma is a characterization of cluster type varieties
coming from the MMP. 
The proof of the lemma follows from~\cite[Lemma 2.29]{JM24}.

\begin{lemma}\label{lem:MMP-char-ct}
Let $(X,B)$ be a Calabi--Yau pair. 
Then, the pair $(X,B)$ is of cluster type if and only if there exists a dlt modification $(Y,B_Y)\rightarrow (X,B)$ and an effective divisor $E_Y$ on $Y$ satisfying the following. The pair $(Y,B_Y+\epsilon E_Y)$ is dlt for $\epsilon>0$ small enough and the $(K_Y+B_Y+\epsilon E_Y)$-MMP terminates in a toric Calabi--Yau pair after contracting all the components of $E_Y$. 
\end{lemma}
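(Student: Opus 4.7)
The plan is to establish the ``if and only if'' by transporting the geometric cluster type condition to and from the MMP characterization. Both directions rest on dlt modifications and the fact that crepant birational maps preserve log canonical places.

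For the forward direction, assume $(X,B)$ is of cluster type, witnessed by a crepant birational map $\phi\colon (\pp^n,\Sigma^n)\dashrightarrow (X,B)$ satisfying~\eqref{eq:ct-equation}. I would resolve the indeterminacy of $\phi$ and then take a dlt modification of $(X,B)$ on a model dominating $\pp^n$, obtaining $(Y,B_Y)$ together with a dlt modification morphism $\psi\colon Y\to X$ and a birational morphism $\pi\colon Y\to\pp^n$. Because $\phi$ is crepant and the dlt modification extracts only log canonical places, the $\pi$-exceptional prime divisors of $Y$ split into two classes: those appearing as components of $B_Y$ (log canonical places of $(\pp^n,\Sigma^n)$, whose centers lie in $\Sigma^n$) and those which do not (having positive log discrepancy over $(\pp^n,\Sigma^n)$). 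Set $E_Y$ to be the reduced sum of the latter. The codimension condition in~\eqref{eq:ct-equation} guarantees that no component of $E_Y$ shares support with $B_Y$, so $(Y,B_Y+\epsilon E_Y)$ remains dlt for $0<\epsilon\ll 1$. The $(K_Y+B_Y+\epsilon E_Y)$-MMP over $\pp^n$ then runs as a relative $E_Y$-MMP, contracts exactly the components of $E_Y$, and terminates at the toric Calabi--Yau pair $(\pp^n,\Sigma^n)$.

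For the reverse direction, suppose we are given a dlt modification $\psi\colon (Y,B_Y)\to (X,B)$, an effective divisor $E_Y$ with $(Y,B_Y+\epsilon E_Y)$ dlt, and an MMP $\mu\colon Y\dashrightarrow Z$ that contracts every component of $E_Y$ and terminates at a toric Calabi--Yau pair $(Z,B_Z)$. Since $Z$ is toric, there is a crepant birational map $\tau\colon (\pp^n,\Sigma^n)\dashrightarrow (Z,B_Z)$ inducing an isomorphism between the open tori $\mathbb{G}_m^n=\pp^n\setminus\Sigma^n$ and $Z\setminus B_Z$. Composing $\tau$, $\mu^{-1}$, and $\psi$ produces a crepant birational map
\[
\Phi\colon (\pp^n,\Sigma^n)\dashrightarrow (X,B).
\]
Any divisor in the exceptional locus of $\Phi$ that meets $\mathbb{G}_m^n$ would have to arise either from a $\mu$-contracted divisor whose image on $Z$ meets $Z\setminus B_Z$, which is impossible because each component of $E_Y$ is contracted over $Z$ and its image lies in $B_Z$, or from a $\psi$-exceptional divisor, which is a log canonical place of $(X,B)$ and therefore has center contained in $B$. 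Pulling back through $\tau^{-1}$, such centers lie over $\Sigma^n$. This verifies~\eqref{eq:ct-equation} for $\Phi$ and shows that $(X,B)$ is of cluster type.

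The main obstacle is the careful bookkeeping of which divisors the MMP contracts and how they correspond, via $\tau$, to divisors on $\pp^n$ meeting the open torus. In the forward direction this amounts to showing that the candidate $E_Y$ avoids $\Supp(B_Y)$, while in the reverse direction it requires confirming that the composite has no extracted divisor over $\mathbb{G}_m^n$. Both reductions are the precise content of~\cite[Lemma 2.29]{JM24}, which provides the translation between the geometric and MMP formulations of cluster type and from which the lemma follows.
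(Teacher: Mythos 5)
Your proposal is correct and follows essentially the same route as the paper, whose entire proof is the citation of \cite[Lemma 2.29]{JM24} --- the same reference your argument ultimately rests on. The only blemishes are harmless: in the forward direction the relative MMP need not terminate at $(\pp^n,\Sigma^n)$ itself but at some toric Calabi--Yau pair crepant over it (only log canonical places are extracted, so the complexity is zero), and in the reverse direction the claim that the images in $Z$ of the components of $E_Y$ lie in $B_Z$ is neither true in general nor needed, since divisors extracted by $\Phi$ contribute only codimension-two loci to ${\rm Ex}(\Phi)\subset\pp^n$ and therefore never threaten~\eqref{eq:ct-equation}; the substantive check is the one you do make, namely that every divisor of $\pp^n$ contracted by $\Phi$ is a log canonical place and hence a component of $\Sigma^n$.
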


\subsection{Complexity}
In this subsection, we recall the concept of complexity of a Calabi--Yau pair.
We state some results regarding the geometry of pairs of small complexity.  

\begin{definition}
{\em 
Let $(X,B)$ be a log pair.
The {\em complexity} of the pair, denoted by $c(X,B)$, is defined to be:
\[
\dim(X)+\dim_\qq {\rm Cl}_\qq(X)-|B|,
\]
where $|B|$ denotes the sum of the coefficients of $B$.
A {\em decomposition} $\Sigma$ of $B$ is an expression of the form $\sum_{i=1}^k b_iB_i$, where each $b_i$ is a non-negative real number and each $B_i$ is an effective Weil divisor with $\sum_{i=1}^k b_iB_i\leq B$.
The {\em rank} of $\Sigma$, denoted by $\rho(\Sigma)$ is the dimension of the $\qq$-vector subspace of ${\rm Cl}_\qq(X)$ generated by the divisors $B_i$.
The {\em norm} of $\Sigma$, denoted by 
$|\Sigma|$ is defined to be $\sum_{i=1}^k b_i$.

Let $\Sigma$ be a decomposition of $B$.
The {\em fine complexity} of $(X,B;\Sigma)$, denoted by $c(X,B;\Sigma)$, is defined to be:
\[
\dim(X)+\rho(\Sigma)-|\Sigma|.
\]
The {\em fine complexity} $\overline{c}(X,B)$ of the pair 
$(X,B)$ is the infimum among of $c(X,B;\Sigma)$ where the infimum runs over all the decompositions $\Sigma$ of $B$.
}
\end{definition}

The following theorem is a more technical version of Theorem~\ref{thm:comp1-ct} proved in~\cite{ELY25}.

\begin{theorem}\label{thm:complexity-one-implies-cluster-type}
Let $(X,B)$ be a Calabi--Yau pair of index one. 
Let $S\leq B$ be a reduced divisor.
Assume that $\overline{c}(X,S)\leq 1$.
Then, the pair $(X,B)$ is of cluster type.
\end{theorem}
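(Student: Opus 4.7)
The plan is to reduce the theorem to the main theorem of~\cite{ELY25} in a slightly stronger form. Although Theorem~\ref{thm:comp1-ct} is stated in the excerpt using the complexity $c(X,B)$, what is actually proved in~\cite{ELY25} is the fine-complexity statement: every Calabi--Yau pair $(X,B)$ of index one with $\overline{c}(X,B)\leq 1$ is of cluster type. Passing through the fine complexity is essential because the arguments in~\cite{BMSZ18, ELY25} manipulate an explicit decomposition of the boundary rather than the boundary itself.

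First, by the hypothesis $\overline{c}(X,S)\leq 1$ and the definition of the fine complexity, for every $\varepsilon>0$ I may pick a decomposition $\Sigma=\sum_{i=1}^{k} b_i B_i$ of $S$ satisfying
\[
c(X,S;\Sigma)=\dim(X)+\rho(\Sigma)-|\Sigma|\leq 1+\varepsilon,
\]
and by the non-negativity and attainment results of~\cite{BMSZ18} one may in fact arrange $\varepsilon=0$.

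Second, since $\Sigma\leq S\leq B$, the inequality $\sum b_i B_i\leq B$ is automatic, so the same expression $\Sigma$ is a decomposition of $B$. The quantity $\dim(X)+\rho(\Sigma)-|\Sigma|$ depends only on $\Sigma$ and not on the larger divisor under consideration, so
\[
\overline{c}(X,B)\leq c(X,B;\Sigma)=c(X,S;\Sigma)\leq 1.
\]
Applying the fine-complexity version of Theorem~\ref{thm:comp1-ct} to the Calabi--Yau pair $(X,B)$ of index one then produces a crepant birational map $(\pp^n,\Sigma^n)\dashrightarrow (X,B)$ whose exceptional locus meets $\mathbb{G}_m^n$ in codimension at least two, which is exactly the cluster type conclusion.

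The only real obstacle is verifying that the argument in~\cite{ELY25} does yield the fine-complexity statement invoked above, rather than only the naive-complexity one displayed in Theorem~\ref{thm:comp1-ct}. This is the expected behaviour: the construction of the crepant birational model from projective space relies on a decomposition of the boundary whose components span a subspace of $\Cl_\qq(X)$ of near-maximal dimension, and such a decomposition is precisely what a bound on $\overline{c}$ furnishes; the components of $B$ outside $\Sigma$ play no role in this construction and are harmless for the cluster type conclusion.
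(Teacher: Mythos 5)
The paper gives no proof of this statement at all---it is imported directly from \cite{ELY25} (the text introduces it as ``a more technical version of Theorem~\ref{thm:comp1-ct} proved in \cite{ELY25}'')---so there is nothing to compare beyond the citation. Your argument is correct and amounts to the same appeal to \cite{ELY25}: the observation that any decomposition of $S$ is a decomposition of $B$, hence $\overline{c}(X,B)\le c(X,B;\Sigma)=c(X,S;\Sigma)$ and so $\overline{c}(X,B)\le\overline{c}(X,S)\le 1$, is a valid one-line monotonicity, and since $B$ is reduced for an index-one Calabi--Yau pair the $\overline{c}(X,B)$-form and the $\overline{c}(X,S)$-form of the cited theorem are in fact equivalent (take $S=B$), so no real gap is introduced by your choice of which form to invoke.
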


We finish this subsection, showing that 
for Calabi--Yau pairs of index one and complexity two, the cluster type condition behaves well under birational transformations.

\begin{lemma}\label{lem:ct-under-bir-comp2}
Let $(X,B)$ be a Calabi--Yau pair of index one and complexity two. 
Let $\phi\colon (X',B')\dashrightarrow (X,B)$ be a crepant birational map that only extracts log canonical places of $(X,B)$.
Then, the pair $(X,B)$ is of cluster type if and only if $(X',B')$ is of cluster type.
\end{lemma}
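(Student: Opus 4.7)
The plan is to classify the prime divisors on $X$ and $X'$ according to the behavior of $\phi$, pass to $\qq$-factorial dlt modifications of both pairs, and connect the two dlt models by either flops or an additional canonical place extraction, depending on a discrete parameter controlled by the complexity. I would first use that $(X,B)$ has index one to conclude that $B$ is reduced, so coefficients lie in $\{0,1\}$; the same holds for $B'$ by crepantness. Since $\phi$ is crepant, log discrepancies are preserved, so the $\phi$-exceptional divisors on $X'$ all have coefficient $1$ in $B'$ (by hypothesis), whereas the $\phi^{-1}$-exceptional divisors on $X$ have coefficient either $0$ (canonical places, log discrepancy one) or $1$ (lc places). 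Let $k$ denote the number of $\phi^{-1}$-exceptional divisors on $X$ with coefficient $0$; a direct count of prime divisors using the definition of complexity yields $c(X,B)=c(X',B')+k$, and hence $c(X',B')=2-k$.

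Next, I would take $\qq$-factorial dlt modifications $\pi\colon (Y,B_Y)\to (X,B)$ and $\pi'\colon (Y',B_{Y'})\to (X',B')$. Lemma~\ref{lem:ct-higher-model} gives one direction of the cluster type equivalence between each base and its dlt modification, and the converse follows by composing any cluster type witness $\sigma\colon (\pp^n,\Sigma^n)\dashrightarrow (Y,B_Y)$ with $\pi$: the new divisors contracted by the composition are strict transforms of $\pi$-exceptional divisors, which are lc places of $(X,B)$ and, by crepancy of $\sigma$, correspond to components of $\Sigma^n$, disjoint from $\mathbb{G}_m^n$. By construction, $Y$ carries all the divisors of $Y'$ together with the $k$ canonical coefficient-zero $\phi^{-1}$-exceptional divisors.

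If $k=0$, then $Y$ and $Y'$ are $\qq$-factorial dlt crepant Calabi--Yau models with the same set of prime divisors, hence connected by a sequence of flops. Since flops are isomorphisms in codimension one they leave the exceptional locus of $\sigma$ on $\pp^n$ unchanged under composition, so the cluster type property transfers between $Y$ and $Y'$, and hence between $(X,B)$ and $(X',B')$. If $k\geq 1$, then $c(X',B')\leq 1$, and Theorem~\ref{thm:complexity-one-implies-cluster-type} forces $(X',B')$ to be of cluster type; to establish the reverse implication I would extract the $k$ canonical places from $Y'$, obtaining a $\qq$-factorial dlt model $\tilde Y$ with the same prime divisors as $Y$, apply Lemma~\ref{lem:ct-higher-model} to transfer cluster type from $(Y',B_{Y'})$ to $(\tilde Y,B_{\tilde Y})$, and then use the flop equivalence between $\tilde Y$ and $Y$ to deduce that $(X,B)$ is also of cluster type.

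The main obstacle is the invariance of cluster type under flops between $\qq$-factorial dlt Calabi--Yau models with the same set of divisors, together with the construction of a $\qq$-factorial dlt model extracting prescribed canonical places. Both hypotheses enter essentially: index one makes the dichotomy between lc and canonical places exhaustive for $\phi^{-1}$-exceptional divisors, and complexity two ensures that whenever a nontrivial canonical place extraction is required ($k\geq 1$), the complexity of the smaller pair $(X',B')$ drops to at most one, putting it in the range of Theorem~\ref{thm:complexity-one-implies-cluster-type} and bypassing the general failure of cluster type descent through canonical place contractions.
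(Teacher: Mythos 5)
Your argument is correct in substance and covers both directions, but it takes a considerably longer route than the paper. The paper's proof is a direct composition argument: given a cluster-type witness $\psi\colon(\pp^n,\Sigma^n)\dashrightarrow(X,B)$ (resp.\ for $(X',B')$), one composes with $\phi^{-1}$ (resp.\ $\phi$) and observes that any newly contracted divisor of $\pp^n$ is, by crepancy, a divisorial log canonical place of $(\pp^n,\Sigma^n)$, hence a coordinate hyperplane disjoint from the torus; the only case this does not handle is when $\phi^{-1}$ extracts a canonical place, and there the paper argues exactly as you do that $c(X',B')<2$ forces $c(X',B')\leq 1$, so Theorem~\ref{thm:complexity-one-implies-cluster-type} applies. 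Your two key mechanisms are thus the same as the paper's, but you wrap them in machinery that is not needed: the passage through $\qq$-factorial dlt modifications chosen to extract matching sets of lc places, the extraction of the $k$ prescribed canonical places, and the flop-connectedness of crepant models are all asserted rather than justified, and of these only ``isomorphic in codimension one'' (which is automatic once the two models carry the same prime divisors) is actually used in your transfer of the cluster-type condition. These auxiliary facts are standard and can be supplied (prescribed extractions via an MMP on a log resolution, in the style of \cite[Cor.~1.4.3]{BCHM}-type statements), so there is no genuine gap, but the direct composition argument makes the detour unnecessary; the one place where your bookkeeping adds value is the explicit identity $c(X,B)=c(X',B')+k$, which the paper only states implicitly as $c(X',B')<c(X,B)$ when a canonical place is extracted.
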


\begin{proof}
First, assume that $(X,B)$ is of cluster type.
Since $\phi$ only extracts log canonical places, we have $c(X',B')\leq 2$.
If $\phi^{-1}$ extacts a canonical place of $(X',B')$, then we get 
$c(X',B')<c(X,B)$. 
As $(X',B')$ has index one, we conclude that $c(X',B')\leq 1$ and so $(X',B')$ is of cluster type.
Thus, we may assume that $\phi$ only extracts or contracts log canonical places. 
Therefore, if $\psi\colon (\pp^n,\Sigma^n)\dashrightarrow (X,B)$ is a crepant birational map satisfying the inequality~\eqref{eq:ct-equation}, then  so does $\psi\circ \phi^{-1}$.

Now, we assume that $(X',B')$ is of cluster type.
Let $\psi\colon (\pp^n,\Sigma^n)\dashrightarrow (X',B')$ be a crepant birational map satisfying the inequality~\eqref{eq:ct-equation}.
As $\phi$ does not contract any divisor of $X'\setminus B'$, we conclude that
$\phi\colon \psi \colon \pp^n\dashrightarrow X$ does not contract any divisor of the algebraic torus.
Therefore, the pair $(X,B)$ is of cluster type.
\end{proof}

\subsection{Strict conic fibrations}
In this subsection, we recall the concept of strict conic fibrations.

\begin{definition}\label{def:strict-cf}
{\em 
Let $f\colon X\rightarrow Z$ be a fibration. Let $(X,B)$ be a log pair.
We say that $f\colon (X,B)\rightarrow Z$ is a {\em strict conic fibration} if the following conditions are satisfied:
\begin{enumerate}
    \item[(i)] The morphism $f$ is a conic fibration of relative Picard rank one, 
    \item[(ii)] the pair $(X,B)$ is Calabi--Yau over $Z$, 
    \item[(iii)] the divisor $\lfloor B\rfloor$ has two irreducible components $S_0$ and $S_\infty$ that are horizontal over $Z$, and
    \item[(iv)] the divisors $S_0$ and $S_\infty$ are disjoint.
\end{enumerate}
}
\end{definition}

\begin{lemma}\label{lem:comp-under-p1-link}
Let $f\colon (X,B) \rightarrow (Z,B_Z)$ be a crepant fibration.
Assume that $f$ is a strict conic fibration between Calabi--Yau pairs. Then, the following statements hold:
\begin{enumerate}
    \item ${\rm coreg}(X,B)={\rm coreg}(Z,B_Z)$, 
    \item ${\rm index}(X,B)\geq {\rm index}(Z,B_Z)$, and 
    \item $c(X,B)\geq c(Z,B_Z)$.
\end{enumerate}
\end{lemma}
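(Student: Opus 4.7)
The proof exploits the special structure of strict conic fibrations: the generic fiber is the twice-marked rational curve $(\pp^1, 0 + \infty)$, which is rigid as a log Calabi--Yau pair. This rigidity forces the moduli part of the canonical bundle formula to vanish, yielding
\[
K_X + B \sim_\qq f^*(K_Z + B_Z).
\]
In addition, the horizontal component $S_0 \leq \lfloor B \rfloor$ is a log canonical center of $(X,B)$ whose structure morphism $f|_{S_0}\colon S_0 \to Z$ is birational: the degree computation $\deg(B|_F) = 2$ on a general fiber $F \cong \pp^1$ shows that $S_0 \cdot F = 1$. Adjunction along $S_0$ therefore yields a crepant birational morphism
\[
g := f|_{S_0}\colon (S_0, B_{S_0}) \longrightarrow (Z, B_Z),
\]
and this will be the workhorse of the proof.

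For (1), the coregularity is preserved both under adjunction to a log canonical place and under crepant birational equivalence. Applying these two facts in turn along the chain
\[
(X, B) \;\rightsquigarrow\; (S_0, B_{S_0}) \;\xrightarrow{g}\; (Z, B_Z)
\]
gives ${\rm coreg}(X, B) = {\rm coreg}(S_0, B_{S_0}) = {\rm coreg}(Z, B_Z)$. For (2), suppose $m(K_X + B) \sim 0$. Restricting to $S_0$ and using adjunction yields $m(K_{S_0} + B_{S_0}) \sim 0$, and since the index is invariant under crepant birational transformations the relation descends through $g$ to give $m(K_Z + B_Z) \sim 0$. Hence ${\rm index}(Z, B_Z) \leq {\rm index}(X, B)$.

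For (3), I would compute $c(X, B) - c(Z, B_Z)$ directly, term by term. The relative dimension contributes $\dim X - \dim Z = 1$, and the assumption $\rho(X/Z) = 1$ combined with the existence of the section $S_0$ gives $\dim_\qq \Cl_\qq(X) - \dim_\qq \Cl_\qq(Z) = 1$. For the boundary, write $B = B^h + B^v$; the degree computation above forces $B^h = S_0 + S_\infty$, so $|B^h| = 2$. It remains to show $|B^v| \leq |B_Z|$. For each prime divisor $D \subset Z$ with $f^*D = \sum_i m_i D_i$ and coefficient $\beta_i$ of $D_i$ in $B$, the canonical bundle formula gives the discriminant coefficient $b_D = 1 - \min_i (1-\beta_i)/m_i$; combining this with the relative Picard rank $1$ hypothesis, which constrains the number and multiplicities of the components $D_i$, yields $\sum_i \beta_i \leq b_D$, and summing over $D$ gives $|B^v| \leq |B_Z|$. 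Putting everything together,
\[
c(X,B) - c(Z,B_Z) \;=\; 1 + 1 - (|B| - |B_Z|) \;=\; |B_Z| - |B^v| \;\geq\; 0.
\]

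The main technical obstacle is the vertical inequality $|B^v| \leq |B_Z|$ in (3). While the canonical bundle formula provides the pointwise discriminant coefficient, controlling the \emph{sum} $\sum_i \beta_i$ against $b_D$ is delicate and uses the relative Picard rank condition in an essential way: a strict conic fibration with two disjoint horizontal sections cannot afford arbitrarily many or arbitrarily multiplied components in a degenerate fiber. One can also argue this step via the crepant birational morphism $g$ and an analysis of how each vertical prime restricts to $S_0$, in which case the contribution of the different ${\rm Diff}_{S_0}(0)$ provides the needed slack when $S_0$ meets the singular locus of $X$ or a degenerate fiber.
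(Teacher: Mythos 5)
Your proof is correct and follows essentially the same route as the paper: items (1) and (2) via adjunction to the horizontal section $S_0$ (using equidimensionality of dual complexes, resp.\ descent of $m(K_{S_0}+B_{S_0})\sim 0$ through the birational morphism $S_0\to Z$), and item (3) via the term-by-term comparison $\dim X=\dim Z+1$, $\dim_\qq\Cl_\qq(X)=\dim_\qq\Cl_\qq(Z)+1$, $|B^{\rm hor}|=2$, together with the coefficient inequality for vertical divisors coming from the canonical bundle formula and the relative Picard rank one hypothesis. Your write-up of the key vertical inequality $|B^{v}|\leq |B_Z|$ is in fact slightly more explicit than the paper's, which states the pointwise coefficient bound ${\rm coeff}_P(B)\leq {\rm coeff}_{P_Z}(B_Z)$ and leaves the uniqueness of the vertical component over each prime of $Z$ implicit.
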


\begin{proof}
Let $S_0\leq \lfloor B\rfloor$ be as in Definition~\ref{def:strict-cf}.
Let $(S_0,B_{S_0})$ be the pair obtained by adjunction of $(X,B)$ to $S_0$.
Then, we have
\[
{\rm coreg}(X,B)={\rm coreg}(S_0,B_{S_0})
\]
due to the equidimensionality of dual complexes of Calabi--Yau pairs (see, e.g.,~\cite[Theorem 1.5]{FS20}).
On the other hand, the morphism $S_0\rightarrow Z$ is birational and so
\[
{\rm coreg}(S_0,B_{S_0})={\rm coreg}(Z,B_Z).
\]
This proves the first item.

If $m(K_X+B)\sim 0$, then $m(K_{S_0}+B_{S_0})\sim 0$ by adjunction.
Since $S_0\rightarrow Z$ is birational, we conclude that $m(K_Z+B_Z)\sim 0$. 
This shows the second item.

Finally, note that every component of $B$ is either $S_0$, $S_\infty$, or it is vertical over $Z$.
For every $P\in X$ which is vertical over $Z$, we have ${\rm coeff}_P(B) \leq 
{\rm coeff}_{P_Z}(B_Z)$ where $P_Z$ is the image of $P$ in $Z$.
Note that $\dim(Z)=\dim(X)-1$ and
$\dim_\qq {\rm Cl}(Z)_\qq =\dim_\qq {\rm Cl}_\qq (X)-1$. Thus, we can compute:
\[ 
c(X,B) = 
\dim X  + \dim_\qq {\rm Cl}_\qq (X) - |B| 
\geq \dim Z +1 + \dim_\qq{\rm Cl}_\qq(Z) - (|B_Z|-2)  = c(Z,B_Z).
\]
This finishes the proof of the third item.
\end{proof}

\subsection{Standard models} 
In this subsection, we introduce the concept of standard models over toric bases. 

\begin{definition}
{\em 
Let $f\colon (X,B)\rightarrow (T,B_T)$ be a crepant fibration from a Calabi--Yau pair $(X,B)$.
We say that $f$ is a {\em standard model over a toric base} if the following conditions are satisfied: 
\begin{enumerate}
\item the Calabi--Yau pair $(T,B_T)$ is toric, 
\item we have $f^{-1}(B_T)\subseteq B$, 
\item the fibration $f$ has no degenerate divisors and we have $\rho(X/T)\leq \dim(X/T)$, and
\item for a general fiber $F$ of $X \to T$ the divisor $B|_F$ is contained in the smooth locus of $F$.
\end{enumerate}
}
\label{def:standard-model-over-toric-bcase}
\end{definition}

\section{Reduction to surfaces over toric varieties}

In this section, we study Calabi--Yau pairs of index one
and complexity two. We prove a structural theorem relating 
surface fibrations over toric varieties to such pairs.
We will need the following lemma regarding conic fibrations.

\begin{lemma}\label{lem:p1-fibrations}
Let $G$ be a finite group.
Let $(X,B)$ be a $G$-equivariant Calabi--Yau sub-pair. 
Let $\phi\colon (X,B)\rightarrow (Z,B_Z)$ be a $G$-equivariant
crepant fibration and let $G_Z$ be the quotient of $G$ acting on $Z$.
Assume that the general fiber of $\phi$ is isomorphic to $\pp^1$, 
the divisor $\lfloor B\rfloor$ has two irreducible components that dominate $Z$, 
the divisor $B^{<0}$ is vertical over $Z$, and $B_Z$ is effective. 
Then, there is a commutative diagram as follows:
\[
\xymatrix{
(X,B) \ar[d]_-{\phi} & (X',B')\ar[d]^-{\phi'}\ar@{-->}[l]_-{\pi} \\
(Z,B_Z) & (Z',B_{Z'})\ar[l]_-{\pi_Z}
}
\]
satisfying the following conditions:
\begin{enumerate}
    \item $(X',B')$ is a Calabi--Yau pair, 
    \item $\pi$ is a $G$-equivariant crepant birational map only extracting log canonical places of $(X,B)$,
    \item $\pi_Z$ is a $G_Z$-equivariant crepant birational morphism only extracting log canonical places of $(Z,B_Z)$, 
    \item $\phi'$ is a $G$-equivariant crepant fibration of relative $G$-Picard rank one, 
    \item we have ${\phi'}^{-1}(\lfloor B_{Z'}\rfloor) \subset \lfloor B'\rfloor$, and
    \item $X'$ is of Fano type provided that $Z$ is of Fano type. 
\end{enumerate}
\end{lemma}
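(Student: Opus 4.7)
The plan is to construct the model $(X',B')\to (Z',B_{Z'})$ in two stages: first modify the base $(Z,B_Z)$ via a $G_Z$-equivariant birational morphism so as to tame its singularities and the discriminant of $\phi$, and then build a well-behaved model of $X$ over this new base via a $G$-equivariant relative MMP. The input is weaker than a strict conic fibration in two ways: the boundary $B$ may have negative coefficients (though only along vertical divisors) and the relative Picard rank may exceed one. These two defects are handled in the two stages respectively.

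For the first stage, I apply the canonical bundle formula to $(X,B)/Z$ to express $(Z,B_Z)$ with $B_Z$ effective and log canonical. Taking a $G_Z$-equivariant dlt modification (which exists for log canonical pairs by standard equivariant MMP) I obtain $\pi_Z\colon (Z',B_{Z'})\to (Z,B_Z)$ extracting only log canonical places of $(Z,B_Z)$; I may further arrange by equivariant blow-ups over $Z$ that the preimage in $Z'$ of the discriminant locus of $\phi$ is contained in $\lfloor B_{Z'}\rfloor$. This is the step that realizes conclusions (3) and, combined with the base change below, (5).

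For the second stage, I form the normalized base change $\tilde X=(X\times_Z Z')^\nu$, take an equivariant log resolution, and write $(\tilde X,B_{\tilde X})$ for the pullback sub-pair, which is crepant to $(X,B)$ by construction. Since $B^{<0}$ is vertical over $Z$ and the vertical preimage of the new lc locus on $Z'$ now lies inside the pulled-back boundary, after possibly extracting further equivariant log canonical places the negative components can be absorbed into exceptional divisors whose coefficients become $\leq 1$. I then run a $G$-equivariant relative MMP for $(\tilde X,B_{\tilde X}^{\geq 0})$ over $Z'$; because the general fiber is $\pp^1$, this MMP terminates in a model $X'\to Z'$ with relative $G$-Picard rank one, which produces a $G$-equivariant Mori fiber structure $\phi'$. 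The two horizontal components of $\lfloor B\rfloor$ dominate $Z$ and give two disjoint sections over the generic point; they are not contracted by any step of the relative MMP and survive into $\lfloor B'\rfloor$. Crepancy of the output with $(X,B)$ gives (1), (2) and (4); Fano type is preserved because the MMP of a $G$-equivariant relatively Fano type morphism stays Fano type at each step, which gives (6).

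The main obstacle I expect is bookkeeping the sub-pair structure through the two modifications: one must guarantee that every divisor extracted on $X$ or on $\tilde X$ is a log canonical place of the original $(X,B)$, and that every divisor contracted by the relative MMP has non-positive log discrepancy with respect to $(X,B)$, so that no \emph{canonical} places are discarded and $B'$ is effective with $(X',B')$ an honest Calabi--Yau pair rather than only a sub-pair. This requires using that $B^{<0}$ is vertical (so negative components map onto divisors of $Z$ and are controlled by $B_Z$) together with the fact that $B_Z$ is effective, which forces the discrepancy defect on $X$ to be cancelled by log canonical places lying over $\lfloor B_{Z'}\rfloor$.
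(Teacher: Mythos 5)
Your overall architecture differs from the paper's in a way that creates real problems. You propose to fix the base first (dlt modification of $(Z,B_Z)$ plus further blow-ups) and then base-change and run a relative MMP over the new base; the paper instead runs two MMPs on the total space over the \emph{original} base $Z$ (first a $(K_Y+\Gamma_Y)$-MMP for an auxiliary boundary $\Gamma_Y$, then a $K_{X_0}$-MMP) and lets $Z'$ emerge as the base of the resulting Mori fiber space, verifying a posteriori that every divisor of $Z'$ exceptional over $Z$ is the image of a degenerate divisor with coefficient one, hence an lc place. The first concrete gap in your route is the step ``I may further arrange by equivariant blow-ups over $Z$ that the preimage in $Z'$ of the discriminant locus of $\phi$ is contained in $\lfloor B_{Z'}\rfloor$.'' Since $\pi_Z$ must be \emph{crepant}, the coefficient of any extracted divisor $E$ in $B_{Z'}$ is forced to be $-a(E;Z,B_Z)$, so it equals one only if $E$ is already a log canonical place of $(Z,B_Z)$; the discriminant of $\phi$ is in general not contained in the non-klt locus of $(Z,B_Z)$, so this arrangement is impossible without violating conclusion (3). (It is also not what conclusion (5) asks for: (5) is achieved on the $X$-side, by forcing every divisor of $X'$ lying over $\lfloor B_{Z'}\rfloor$ to have coefficient one, not by enlarging $\lfloor B_{Z'}\rfloor$.)

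The second and more serious gap is that the actual content of the lemma --- making $B'$ effective while extracting only lc places and securing ${\phi'}^{-1}(\lfloor B_{Z'}\rfloor)\subset\lfloor B'\rfloor$ --- is exactly the point you defer with ``after possibly extracting further equivariant log canonical places the negative components can be absorbed into exceptional divisors whose coefficients become $\leq 1$,'' and you yourself flag it as the main obstacle. Extracting lc places does not change the coefficients of the existing negative components; they must be \emph{contracted}, and one must show the contraction can be done without discarding canonical places. The paper's mechanism is precise: using a horizontal component of $\lfloor B\rfloor$ that is generically a section, one selects for each prime divisor $P\subset Z$ a divisor $P_Y$ upstairs with $\operatorname{coeff}_{P_Y}(B_Y)=\operatorname{coeff}_P(B_Z)\geq 0$, raises to one the coefficients of all other exceptional divisors to form $\Gamma_Y$, and applies Lai's termination lemma so that the $(K_Y+\Gamma_Y)$-MMP over $Z$ contracts exactly $\operatorname{supp}(\Gamma_Y-B_Y)$; this single construction simultaneously produces an effective boundary, the containment (iv), and the lc-place condition. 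Without an equivalent device your argument does not close. Finally, conclusion (6) is not a formal consequence of ``MMP preserves Fano type'': one must first exhibit a klt pair on $X'$ with relatively ample anticanonical class, which the paper does by applying the canonical bundle formula to $(X',\lfloor B'\rfloor_{\rm hor})$ and perturbing; your one-sentence justification omits this.
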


\begin{proof}
Let $r\colon Y\rightarrow (X,B)$ be a $G$-equivariant log resolution. 
Let $(Y,B_Y)$ be the pull-back of $(X,B)$ to $Y$.
For every $G_Z$-prime divisor $P$ of $Z$ with $0\leq b={\rm coeff}_P(B_Z)$  
we may find a $G$-prime divisor $P_Y$ of $Y$ that maps to $P$
and for which $b={\rm coeff}_{P_Y}(B_Y)$.
The previous follows as $\lfloor B\rfloor$ has a component that maps birationally to $Z$ and the fact that $B_Z$ is effective.
We set $D_Y:=\sum_{P\subset Z} {\rm coeff}_{P_Y}(B_Y)P_Y$. 
Let $\Gamma_Y$ be the divisor obtained from $B_Y$ by increasing to one the coefficients of all the $r$-exceptional divisors that are not contained in the support of $D_Y$. 
Then, the pair $(Y,\Gamma_Y)$ is a $G$-equivariant dlt pair
for which $K_Y+\Gamma_Y$ is $\qq$-linearly equivalent to a $G$-invariant effective divisor which is degenerate over $Z$.
Thus, we can run a $G$-equivariant $(K_Y+\Gamma_Y)$-MMP over $Z$ that terminates after contracting all the components of $\Gamma_Y-B_Y$ (see, e.g.,~\cite[Lemma 2.10]{Lai11}).
Let $X_0$ be the resulting model and $B_0$ be the push-forward of $B_Y$ to $X_0$.
Then, we have a commutative diagram:
\[
\xymatrix{
(X,B)\ar[rd]_-{\phi} \ar@{-->}[rr]^-{\pi_0} & & (X_0,B_0)\ar[ld]^-{\phi_0} \\
& (Z,B_Z) & 
}
\]
satisfying the following conditions:
\begin{enumerate}
    \item[(i)] the pair $(X_0,B_0)$ is a $G$-equivariant Calabi--Yau pair, 
    \item[(ii)] $\pi_0^{-1}$ is a $G$-equivariant crepant birational map only extracting log canonical places of $(X,B)$, 
    \item[(iii)] $\phi_0$ is a $G$-equivariant crepant fibration, 
    \item [(iv)] we have $\phi_0^{-1}(\lfloor B_Z\rfloor)\subset \lfloor B_0\rfloor$, 
    \item[(v)] all the degenerate divisors of $\phi_0$ are contained in $\lfloor B_0\rfloor$, and 
    \item[(vi)] $\phi_0$ has multiplicity one at the generic point of divisors not contained in $\lfloor B_Z\rfloor$.
\end{enumerate}
Condition (iv) and (v) hold as in the $(K_Y+\Gamma_Y)$-MMP over $T$ we contract all the degenerate divisors of $Y$ over $T$ except possibly for those contained in $B_Y^{=1}$.
Condition (vi) holds as whenever $b={\rm coeff}_P(B_Z)$ 
we can find a $G$-prime divisor $P_0$ in $X_0$ for which $b={\rm coeff}_{P_0}(B_0)$. By construction, whenever $b<1$ the $G$-prime divisor $P_0$ equals the fiber over $P$.

Now, we run a $G$-equivariant $K_{X_0}$-MMP over $Z$.
This MMP terminates with a $G$-equivariant Mori fiber space 
$X' \rightarrow Z'$ where $Z'\rightarrow Z$ is a $G_Z$-equivariant projective birational morphism. 
We obtain a commutative diagram as follows:
\[
\xymatrix{
(X_0,B_0) \ar@{-->}[r]^-{\pi'}
\ar[d]_-{\phi_0} & (X',B')\ar[d]^-{\phi'} \\
(Z,B_{Z}) & (Z',B_{Z'})\ar[l]_-{\pi_Z}
}
\]
satisfying the following conditions:
\begin{enumerate}
    \item[(a)] the pair $(X',B')$ is a $G$-equivariant Calabi--Yau pair,
    \item[(b)] $\pi'$ is a $G$-equivariant birational contraction, 
    \item[(c)] $\pi_Z$ is a $G_Z$-equivariant crepant birational morphism only extracting log canonical places of $(Z,B_Z)$, 
    \item[(d)] $\phi'$ is a $G$-equivariant crepant fibration
    of relative $G$-Picard rank one, 
    \item[(e)] we have ${\phi'}^{-1}(\lfloor B_{Z'}\rfloor)\subset \lfloor B'\rfloor$, and 
    \item[(f)] $\phi'$ has multiplicity one at the generic point of divisors not contained in $\lfloor B_{Z'} \rfloor$. 
\end{enumerate}
Conditions $(a),(b),(d)$, and $(e)$
follow from the construction of the model $X'$.
Condition $(f)$ holds as the same holds for $\phi_0$ due to (vi).
We argue condition $(c)$. If $P'$ is a prime divisor on $Z'$ such that 
$\pi_Z(P')$ has codimension at least two, then $P_0:={{\pi'}^{-1}}_*({\phi'}^*P')$
is a degenerate divisor for $\phi_0$, therefore $P_0$ appears with coefficient one in $B_0$ and so $P'$ appears with coefficient one in $B_{Z'}$.

Finally, it suffices to argue that $X'$ is of Fano type provided that $Z$ is of Fano type. In this case, the variety $Z'$ is also of Fano type (see, e.g.,~\cite[Lemma 2.17.(3)]{Mor24a}).
Therefore, we can find $\Delta_{Z'}$ such that $(Z',\Delta_{Z'})$ is klt and $-(K_{Z'}+\Delta_{Z'})$ is ample.
Further, we may assume that the coefficients of $\Delta_{Z'}$ are arbitrarily close to one along the support of $\lfloor B_{Z'}\rfloor$.
The previous statement is obtained by possibly taking combinations of $B_{Z'}$ and $\Delta_{Z'}$.
Further, we may assume that $\Delta_{Z'}$ is $G$-invariant.
Let $(Z',D_{Z'})$ be the klt pair obtained by the canonical bundle formula applied to $(X',\lfloor B'\rfloor_{\rm hor})$ with respect to $\phi'$.
By condition (f), we know that the support of $D_{Z'}$ is contained in $\lfloor B_{Z'}\rfloor$, so we may assume that $\Delta_{Z'}\geq D_{Z'}$.
Let $F_{Z'}:=\Delta_{Z'}-D_{Z'}$.
Then, for $\epsilon>0$ small enough the $G$-equivariant divisor 
\[
-(K_{X'}+(1-\epsilon)\lfloor B'\rfloor_{\rm hor}+{\phi'}^*F_{Z'}) 
\equiv 
- {\phi'}^*(K_{Z'}+\Delta_{Z'})+\epsilon \lfloor B'\rfloor_{\rm hor}
\]
is ample and the pair $(X',(1-\epsilon)\lfloor B'\rfloor_{\rm hor}+{\phi'}^*F_{Z'})$ is klt.
This implies that $X'$ is of Fano type.
\end{proof}

Before proving the structural theorem for pairs of complexity two, we will need a lemma about the behavior of the cluster type property under conic fibrations.

\begin{lemma}\label{lem:conic-fib-ct}
Let $\phi\colon (X,B)\rightarrow (Z,B_Z)$ be a crepant fibration of relative dimension one.
Assume that the general fiber of $\phi$ is isomorphic to $\pp^1$
and that the divisor $\lfloor B\rfloor$ has two irreducible components that dominate $Z$.
If $(Z,B_Z)$ is of cluster type, then $(X,B)$ is of cluster type. 
\end{lemma}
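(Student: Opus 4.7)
The plan is to construct an explicit crepant birational map from the toric pair $(\mathbb{P}^1\times\mathbb{P}^{n-1},\Sigma_W)$ to $(X,B)$, where $n=\dim X$, by trivializing the generic fiber of $\phi$ via its two rational points coming from the horizontal components of $\lfloor B\rfloor$, and invoking the cluster-type realization on the base.

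First, I would reduce to the case of relative Picard rank one by applying Lemma~\ref{lem:p1-fibrations} with $G$ trivial; its hypotheses are satisfied because $B\geq 0$ (so $B^{<0}=0$ is vacuously vertical) and $B_Z$ is effective. This yields a crepant birational map $\pi\colon(X',B')\dashrightarrow(X,B)$, a crepant birational morphism $\pi_Z\colon(Z',B_{Z'})\to(Z,B_Z)$, and a crepant fibration $\phi'\colon(X',B')\to(Z',B_{Z'})$ of relative Picard rank one, satisfying ${\phi'}^{-1}(\lfloor B_{Z'}\rfloor)\subseteq\lfloor B'\rfloor$, with both $\pi$ and $\pi_Z$ extracting only log canonical places. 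Then $(Z',B_{Z'})$ is cluster type by Lemma~\ref{lem:ct-higher-model}, and cluster type descends from $(X',B')$ to $(X,B)$: composing a realization $(\mathbb{P}^n,\Sigma^n)\dashrightarrow(X',B')$ with $\pi$ preserves the codimension-two torus condition, since the divisors contracted by $\pi$ are lc places of $(X,B)$, hence components of $\lfloor B'\rfloor$, which $\mathbb{G}_m^n$ already misses up to codimension two. So we may assume $\phi$ itself has relative Picard rank one and $\phi^{-1}(\lfloor B_Z\rfloor)\subseteq\lfloor B\rfloor$.

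Now let $\psi\colon(\mathbb{P}^{n-1},\Sigma^{n-1})\dashrightarrow(Z,B_Z)$ realize the cluster type of $Z$, and let $S_0,S_\infty\leq\lfloor B\rfloor$ be the two horizontal components. On the generic fiber they cut out two distinct $K(Z)$-rational points of the conic $X_{K(Z)}$, so $X_{K(Z)}\cong\mathbb{P}^1_{K(Z)}$, and I fix such an isomorphism identifying $S_0,S_\infty$ with $\{0\},\{\infty\}$. Combined with $\psi$, this produces a birational map
\[
\Phi\colon W:=\mathbb{P}^1\times\mathbb{P}^{n-1}\dashrightarrow X,\qquad\phi\circ\Phi=\psi\circ p_2,
\]
where $p_2\colon W\to\mathbb{P}^{n-1}$ is the projection. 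Set $\Sigma_W:=(\{0\}+\{\infty\})\times\mathbb{P}^{n-1}+\mathbb{P}^1\times\Sigma^{n-1}$, the toric boundary of $W$. Using the canonical bundle formula $K_X+B=\phi^*(K_Z+B_Z)$ and crepancy of $\psi$, a direct computation yields
\[
\Phi^*(K_X+B)=p_2^*\psi^*(K_Z+B_Z)=p_2^*(K_{\mathbb{P}^{n-1}}+\Sigma^{n-1})=K_W+\Sigma_W,
\]
where the last step uses $-K_{\mathbb{P}^1}=\{0\}+\{\infty\}$. Hence $\Phi$ is a crepant birational map of Calabi--Yau pairs.

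Finally, I would verify the torus condition. Since $\Phi$ is crepant and $\Sigma_W$ is reduced, any prime divisor of $W$ contracted by $\Phi$ is a log canonical place of $(W,\Sigma_W)$, hence a component of $\Sigma_W$; so the divisorial part of $\mathrm{Ex}(\Phi)$ is disjoint from $\mathbb{G}_m^n\subset W$. The residual indeterminacy locus has codimension at least two in $W$ by normality of $W$, whence $\overline{\mathbb{G}_m^n\cap\mathrm{Ex}(\Phi)}$ has codimension at least two. Precomposing $\Phi$ with any crepant birational map $(\mathbb{P}^n,\Sigma^n)\dashrightarrow(W,\Sigma_W)$ that is the identity on the tori (these exist between any two toric Calabi--Yau pairs of the same dimension via a common refinement of fans) gives the required realization of $(X,B)$ as cluster type. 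The main obstacle is ensuring the crepant compatibility of $\Phi$ with the generic trivialization: one must argue that the two horizontal sections rigidify the generic fiber sufficiently for the construction to pull back $\Sigma_W$ to the full boundary $B$, and that no unexpected divisors on $W$ get contracted outside $\Sigma_W$, both of which follow from the crepancy identity above.
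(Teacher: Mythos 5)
Your strategy---trivializing the conic fibration over the cluster realization of the base to get an explicit map from $\pp^1\times\pp^{n-1}$---is genuinely different from the paper's proof, which instead resolves, applies Lemma~\ref{lem:p1-fibrations} to produce a Mori fiber space $(Y,B_Y)\to (T',B_{T'})$ over a toric base, and then concludes that $(Y,B_Y)$ is \emph{toric} by the complexity count $c(Y,B_Y)=c(T',B_{T'})+\dim(Y/T')+\rho(Y/T')-|{B_Y}_{\rm hor}|=0$, finally checking that the extracted divisors are lc places. Unfortunately, your route has two genuine gaps, both concentrated at the point where the paper's complexity argument does the real work. First, the identity $\Phi^*(K_X+B)=K_W+\Sigma_W$ is not a ``direct computation'': crepant pull-back is defined divisor-by-divisor on a common resolution, and it does not factor through the formal manipulation $\Phi^*\phi^*=p_2^*\psi^*$, because pull-back of Weil divisors along rational maps is not functorial and the canonical bundle formula is only a $\qq$-linear equivalence (indeed $K_X+B-\phi^*(K_Z+B_Z)$ is visibly nonzero as a divisor: it contains $S_0+S_\infty$). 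To show that the crepant transform $B_W$ of $B$ actually equals $\Sigma_W$ you must compute the coefficient of every vertical divisor $\pp^1\times H_i$ of $W$, including those lying over $\psi$-exceptional lc places of $(Z,B_Z)$; this requires knowing that every divisor over $X$ whose image in $Z$ is an lc center of $(Z,B_Z)$ is an lc place of $(X,B)$, which is exactly the content the paper extracts from the MMP construction (conditions (iv)--(vi) in the proof of Lemma~\ref{lem:p1-fibrations}) and is not automatic from $\phi^{-1}(\lfloor B_Z\rfloor)\subseteq\lfloor B\rfloor$ alone.

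Second, your verification of the torus condition is circular. You assert that ``since $\Phi$ is crepant and $\Sigma_W$ is reduced, any prime divisor of $W$ contracted by $\Phi$ is a log canonical place of $(W,\Sigma_W)$.'' Crepancy gives no such thing: if $E\subset W$ is contracted by $\Phi$ and $E\not\subseteq\Sigma_W$, crepancy only says $a(E;X,B)=1-\coeff_E(\Sigma_W)=1$, and divisors with log discrepancy exactly one and center of codimension $\geq 2$ certainly exist on birational models of a Calabi--Yau pair (e.g.\ the exceptional divisor of the blow-up of a codimension-two center contained in a single component of $\lfloor B\rfloor$). Whether such a divisor appears on $W=\pp^1\times\pp^{n-1}$ is precisely the statement that $(X,B)$ is of cluster type, so it cannot be assumed. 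The paper avoids both issues at once: after the MMP of Lemma~\ref{lem:p1-fibrations} it does not need to identify the model with a product, only to observe that its complexity is zero (hence it is toric by~\cite{BMSZ18}), and the lc-place condition on the extracted divisors is then read off from ${\phi'}^{-1}(\lfloor B_{T'}\rfloor)\subseteq\lfloor B_Y\rfloor$. If you want to salvage your approach, you would need to first pass to the model produced by Lemma~\ref{lem:p1-fibrations} (as you do), but then replace the formal pull-back identity by an honest discrepancy computation on that model---at which point you are essentially reproving the paper's complexity-zero step by hand.
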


\begin{proof}
Since $(Z,B_Z)$ is of cluster type, there exists a toric Calabi--Yau pair
$(T,B_T)$ and a crepant birational map
$\pi_T \colon (Z,B_Z)\dashrightarrow (Z,B_Z)$ that only extracts log canonical places of $(Z,B_Z)$.
Let $X'\rightarrow X$ be a log resolution of $(X,B)$ for which $X'$ dominates $T$.
Let $(X',B')$ be the pull-back of $(X,B)$ to $X'$. Note that, a priori, the divisor $B'$ may have negative coefficients, however, the prime components with negative coefficients are vertical over $T$. Therefore, we can apply Lemma~\ref{lem:p1-fibrations}
to the crepant fibration $(X',B')\rightarrow (T,B_T)$.
We obtain a diagram as follows:
\[
\xymatrix{ 
(X,B)\ar[d]_-{\phi} & & (Y,B_Y)\ar@{-->}[ll]_-{\pi}\ar[d]^-{\phi_Y} \\
(Z,B_Z)\ar@{-->}[r]^-{\pi_T} & (T,B_T) &
(T',B_{T'})\ar[l]_-{\pi_{T'}}
}
\]
where $\phi_Y$ is a Mori fiber space, 
$\pi$ is a crepant birational map, and 
$\pi_{T'}$ is a toric birational morphism.
In particular, the pair $(T',B_{T'})$ is a toric Calabi--Yau pair.
First, we argue that $(Y,B_Y)$ is a toric pair. 
Indeed, we have $\phi^{-1}(B_{T'})\subset \lfloor B_Y\rfloor$
and $\lfloor B_Y \rfloor$ has two prime components that dominate $T'$.
Therefore, we can compute:
\[
c(Y,B_Y)=c(T',B_{T'})+\dim(Y/T')+\rho(Y/T')-|{B_{Y}}_{\rm hor}| = 0.
\]
Hence, the pair $(Y,B_Y)$ is toric.
Now, we argue that $\pi$ only extracts log canonical places of $(X,B)$.
Indeed, if $S$ is a prime divisor on $Y$ whose center on $X$ has higher codimension, then $\phi_Y(S)\subset B_{T'}$.
As $\phi_Y^{-1}(B_{T'})\subset \lfloor B_Y \rfloor$, we conclude that $S$ appears with coefficient one in $B_Y$.
Hence, the pair $(X,B)$ is of cluster type.
\end{proof}

We turn to proving a structural theorem for Calabi--Yau pairs of index one and complexity at most two.

\begin{proof}[Proof of Theorem~\ref{thm:comp2-structure}]
If the complexity of the pair $(X,B)$ is zero, then $(X,B)$ is a toric Calabi--Yau pair, so condition (i) holds for $(X,B)=(X',B')=(Y,B_Y)$.
If the complexity of the pair $(X,B)$ is one, then we know that $(X,B)$ is of cluster type due to Theorem~\ref{thm:comp1-ct}.
Therefore, condition (i) again holds by taking $(X,B)=(X',B')=(Y,B_Y)$.
Thus, from now on, we focus on Calabi--Yau pairs of index one and complexity two.
We assume that $n\geq 2$.

Let $(X,B)$ be a Calabi--Yau pair of
dimension $n$, index one, and complexity two. We will proceed by induction on $n$.
In the case that $n=1$, the variety $X$ is isomorphic to $\pp^1$ as $X$ is of Fano type. Moreover, the divisor $B$ equals $\{0\}+\{\infty\}$ as $(X,B)$ is log canonical of index one.
Therefore, it suffices to take
$(Y,B_Y)=(X',B')=(X,B)$ and condition (i) holds.

By~\cite[Theorem 5.6]{Mor24a}, there exists a crepant birational map
$(X_0,B_0)\dashrightarrow (X,B)$, only extracting log canonical places
of $(X,B)$, and a tower of Mori fiber spaces
\[
\xymatrix{
(X_0,B_0)\ar[r]^-{f_1} & (X_1,B_1)\ar[r]^-{f_2} & \dots  \ar[r]^-{f_k} &  (X_k,B_k) \ar[r]^-{f_{k+1}} & {\rm Spec}(\kk),
}
\]
of which at least $n-2$ of the $f_i$'s are strict conic fibrations.
Note that either $k=n$ or $k=n-1$.
Otherwise, we get $\dim X_0 \leq n-2$, leading to a contradiction. 
Assume that $k=n-1$. Then, by dimension reasons, one of the $f_i$'s, let's say $f_j$, must be a surface fibration. 
In this case, it suffices to set $(X',B'):=(X_0,B_0)$
and $(X'',B''):=(X_j,B_j)$.
Indeed, by~\cite[Theorem 1.5]{Mor24a}, the pair $(X_{j+1},B_{j+1})$
is a toric Calabi--Yau pair.
Thus, condition (ii) holds. 

From now on, we assume that $k=n$ and then all the $f_i$'s are conic fibrations.
Furthermore, at least $n-2$ of them are strict conic fibrations.
We prove by induction on the dimension that condition (i) holds.
We proceed in two cases depending on the number of irreducible components of $B_0$ that are horizontal over $X_1$.\\

\textit{Case 1:} Assume that $B_0$ has two components that are horizontal over $X_1$.\\ 

In this case, by the canonical bundle formula, we know that 
$(X_1,B_1)$ is a Calabi--Yau pair of index one and complexity two.
By induction on the dimension
there exists a crepant birational model $(X_1',B_1')$ of $(X_1,B_1)$
that admits a finite crepant morphism $(Y_1,B_{Y_1})\rightarrow (X'_1,B_1')$
of degree at most $2$ such that $(Y_1,B_{Y_1})$ is of cluster type.
By Lemma~\ref{lem:p1-fibrations}, there is a crepant birational model
$(X_0',B_0')$ of $(X_0,B_0)$, with $B_0'$ effective, such that $X_0'$
admits a Mori fiber space to $X_1'$.
In the previous line, we may need to replace both $(X'_1,B'_1)$ and $(Y,B_Y)$ by higher models only extracting log canonical places. 
Due to Lemma~\ref{lem:ct-higher-model}, the cluster type property of $(Y,B_Y)$ is preserved under this replacement.
Let $Y_0$ be the normalization of the main component of
$X_0' \times_{X_1'} Y_1$. 
Let $(Y_0,B_{Y_0})$ be the sub-pair induced by pull-back of $(X_0',B_0')$ to $Y_0$. 
Note that $(Y_0,B_{Y_0})$ is a $\zz_2$-equivariant Calabi--Yau sub-pair over $Y_1$ and $\lfloor B_{Y_0}\rfloor$ has two irreducible components that dominate $Y_1$.
By Lemma~\ref{lem:p1-fibrations}, possibly replacing $(Y_1,B_{Y_1})$ with a higher birational model that only extracts log canonical places,
we may assume that $(Y_0,B_{Y_0})$ is a $\zz_2$-equivariant Calabi--Yau pair.
By Lemma~\ref{lem:ct-higher-model}, the cluster type condition of $(Y,B_Y)$ is preserved in the previous replacement.
Thus, we get a commutative diagram as follows:
\[
\xymatrix{
(X_0,B_0) \ar[d]^-{f_1} & (X_0',B_0')\ar[d]^-{f'_1} \ar@{-->}[l]_-{\phi} & (Y_0,B_{Y_0})\ar[l]_-{/\zz_2}\ar[d]^-{f_Y}\\
(X_1,B_1) & (X_1',B_1')\ar@{-->}[l]_-{\phi_1} & (Y_1,B_{Y_1})\ar[l]_-{/\zz_2}
}
\]
where all the vertical morphisms are conic fibrations.
Furthermore, the pair $(X_0',B_0')$ is a crepant birational model of $(X_0,B_0)$
and $(Y_0,B_{Y_0})\rightarrow (X_0',B_0')$ is a crepant finite morphism
of degree at most $2$. 
If suffices to show that $(Y_0,B_{Y_0})$ is of cluster type.
By construction, we know that $B_{Y_0}$ has two irreducible components that dominate $Y_1$.
Therefore, the pair $(Y_0,B_{Y_0})$ is of cluster type by Lemma~\ref{lem:conic-fib-ct} and so condition (ii) holds in this case.\\ 

\textit{Case 2:} Assume that $B_0$ has a unique irreducible component that dominates $X_1$.\\ 

The horizontal component of $B_0$ 
admits a generically $2$-to-$1$ morphism to $X_1$.
We let $Y_1\rightarrow X_1$ be the finite morphism of degree at most two obtained by applying Stein factorization to the morphism to $X_1$ from the normalization of the horizontal component of $B_0$.
Let $X_1$ be the normalization of the main component of $X_0\times_{X_1} Y_1$. Then, we have a commutative diagram:
\[
\xymatrix{
(X_0,B_0)\ar[d]^-{f_1} & (Y_0,B_{Y_0})\ar[d]^-{f_Y} \ar[l]_-{/\zz_2}\\ 
(X_1,B_1) & (Y_1,B_{Y_1}). \ar[l]_-{/\zz_2}
}
\]
Note that $|B_0|\geq \dim X_0+\rho(X_0)-2$
and so 
\[
|\lfloor B_1\rfloor| \geq \dim X_0 +\rho(X_0)-3 
= \dim X_1 + \rho(X_1) -1.
\]
Hence we get $c(X_1,\lfloor B_1\rfloor)\leq 1$.
Thus, we have $\overline{c}(Y_1,\lfloor B_{Y_1}\rfloor)\leq 1$.
A priori the divisor $B_{Y_0}$ may not be effective, 
however, applying Lemma~\ref{lem:p1-fibrations}, we may assume that $B_{Y_0}$ is effective after possibly replacing $(Y_1,B_{Y_1})$ with a higher model that only extracts log canonical places. 
This replacement does not make the fine complexity larger (see, e.g.,~\cite[Lemma 3.32]{MS21}).
Note that $(Y_1,B_{Y_1})$ is a Calabi--Yau pair of index one. 
By Theorem~\ref{thm:complexity-one-implies-cluster-type},
we know that $(Y_1,B_{Y_1})$ is of cluster type.
Furthermore, the divisor $B_{Y_0}$ has two components that dominate $Y_1$.
By Lemma~\ref{lem:comp-under-p1-link}, we conclude that $(Y_0,B_{Y_0})$ is of cluster type and so condition (ii) holds in this case.
\end{proof}

Next, we explain how the complexity behaves
under strict conic fibrations. 
We will use the following lemma 
that allows us to deduce that a suitable conic fibration over a Fano type variety
is also a Fano type variety.

\begin{lemma}\label{lem:FT-conic-fib}
Let $\phi\colon (X,B)\rightarrow (Z,B_Z)$ be a crepant Calabi--Yau fibration of relative dimension one. Assume that the following conditions hold:
\begin{enumerate}
    \item the divisor $\lfloor B\rfloor$ has two irreducible components over $Z$, 
    \item every degenerate divisor of $\phi$ is contained in $\lfloor B\rfloor$, and 
    \item $\phi^{-1}(\lfloor B_Z\rfloor)\subset \lfloor B\rfloor$. 
\end{enumerate}
If $Z\rightarrow W$ is of Fano type, then $X\rightarrow W$ is of Fano type.
\end{lemma}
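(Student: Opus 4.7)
The plan is to mimic the final paragraph of the proof of Lemma~\ref{lem:p1-fibrations}, adapting it to work relatively over $W$ rather than absolutely. First, I would invoke that $Z\to W$ is of Fano type to produce a $\qq$-boundary $\Delta_Z$ on $Z$ such that $(Z,\Delta_Z)$ is klt and $-(K_Z+\Delta_Z)$ is nef and big over $W$. Using that nef-and-bigness is an open condition and taking suitable convex combinations of $\Delta_Z$ with small effective perturbations supported on $\lfloor B_Z\rfloor$, I can arrange that the coefficients of $\Delta_Z$ along $\lfloor B_Z\rfloor$ are arbitrarily close to $1$ while preserving both klt and the relative nef-and-bigness over $W$.

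Next, I would apply the canonical bundle formula to the sub-pair $(X,\lfloor B\rfloor_{\rm hor})$ with respect to $\phi$. On the general fiber $F\simeq \pp^1$, the divisor $\lfloor B\rfloor_{\rm hor}|_F$ is the sum of two reduced points (because the two horizontal components of $\lfloor B\rfloor$ must be sections of $\phi$), so $K_F+\lfloor B\rfloor_{\rm hor}|_F\sim 0$, and the formula produces a klt pair $(Z,D_Z)$ with
\[
K_X+\lfloor B\rfloor_{\rm hor}\sim_{\qq}\phi^*(K_Z+D_Z).
\]
Hypotheses (2) and (3) force $\mathrm{supp}(D_Z)\subseteq \lfloor B_Z\rfloor$: over a prime $P\not\subseteq \lfloor B_Z\rfloor$, condition (3) ensures $\phi^{-1}(P)\not\subset \lfloor B\rfloor$ and condition (2) rules out degenerate components above $P$, so $(X,\lfloor B\rfloor_{\rm hor})$ is log canonical over a neighborhood of the generic point of $P$ and hence contributes no discriminant. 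Since the coefficients of $\Delta_Z$ along $\lfloor B_Z\rfloor$ were chosen close to $1$ and $D_Z$ is klt (coefficients strictly less than $1$), I may arrange $\Delta_Z\geq D_Z$ and set $F_Z:=\Delta_Z-D_Z\geq 0$.

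Finally, for $\epsilon>0$ small I would set $\Delta:=(1-\epsilon)\lfloor B\rfloor_{\rm hor}+\phi^*F_Z$. A direct computation using the canonical bundle formula relation above gives
\[
-(K_X+\Delta)\sim_{\qq}-\phi^*(K_Z+\Delta_Z)+\epsilon\lfloor B\rfloor_{\rm hor}.
\]
Klt-ness of $(X,\Delta)$ follows from $\epsilon$ small, the klt-ness of $(Z,\Delta_Z)$, and the fact that $\Delta$ pulls back a klt boundary up to a small perturbation supported on the sections; for the positivity side, $-\phi^*(K_Z+\Delta_Z)$ is nef over $W$ (pullback of nef), while the horizontal term provides $\phi$-positivity on the $\pp^1$-fibers. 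Combining these yields nef and big over $W$, so $X\to W$ is of Fano type.

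The main obstacle is the nef-and-bigness verification in the last step. Since the pullback of a big divisor along a positive-dimensional fibration is never big, the missing bigness must come entirely from the horizontal term $\epsilon\lfloor B\rfloor_{\rm hor}$. Without a relative Picard rank one hypothesis, this horizontal divisor need not be globally $\phi$-ample, and some care is required to ensure that the combination with the pullback of the nef-and-big class from $Z$ gives a class that is nef and big over $W$; this is where assumptions (2) and (3) are indispensable, since they guarantee that every component of every fiber of $\phi$ either meets $\lfloor B\rfloor_{\rm hor}$ or is itself contained in $\lfloor B\rfloor$, allowing a fiberwise Kleiman-type argument to conclude.
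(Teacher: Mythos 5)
Your proposal diverges from the paper's proof in a way that leaves a genuine gap. The paper does \emph{not} run the positivity argument directly on $X$: it first invokes Lemma~\ref{lem:p1-fibrations} to replace $(X,B)$ by a crepant model $(X',B')$ admitting a Mori fiber space structure of relative Picard rank one over a modification $Z'$ of $Z$, proves Fano type there, and then uses hypotheses (2) and (3) to check that every divisor of $X$ contracted in $X'$ is a log canonical place of $(X,B)$, so that the Fano type property transfers back to $X$ via \cite[Lemma 2.17.(3)]{Mor24a}. The relative Picard rank one condition is exactly what makes the final computation of Lemma~\ref{lem:p1-fibrations} work: on a Mori fiber space every effective horizontal divisor is relatively ample, so $-\phi'^*(K_{Z'}+\Delta_{Z'})+\epsilon\lfloor B'\rfloor_{\rm hor}$ is ample (hence nef and big) for small $\epsilon$.

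On $X$ itself this fails, and the step you yourself flag as ``the main obstacle'' is not something that ``some care'' repairs. Without relative Picard rank one, $\lfloor B\rfloor_{\rm hor}$ need not even be $\phi$-nef: a vertical curve $C$ contained in a horizontal component $S_0$ of $\lfloor B\rfloor$ (such curves exist once $S_0\to Z$ contracts something over a locus of codimension at least two) can satisfy $\lfloor B\rfloor_{\rm hor}\cdot C<0$, and since $-(K_X+\Delta)\cdot C=\epsilon\,\lfloor B\rfloor_{\rm hor}\cdot C$ for vertical $C$ under your canonical bundle formula relation, nefness of $-(K_X+\Delta)$ over $W$ breaks. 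Your claim that (2) and (3) force every vertical curve either to meet $\lfloor B\rfloor_{\rm hor}$ non-negatively or to lie in $\lfloor B\rfloor$ does not address this case, and the ``fiberwise Kleiman-type argument'' is never supplied. The bigness over $W$ is likewise unresolved: the top self-intersection of $\phi^*N+\epsilon H$ on the generic fiber of $X\to W$ contains terms $(\phi^*N)^{n-k}H^k$ with $k\geq 2$ that can be arbitrarily negative when the sections have negative self-intersection. The missing idea, in short, is the reduction to the relative Picard rank one model of Lemma~\ref{lem:p1-fibrations} together with the observation that hypotheses (2) and (3) guarantee this reduction only contracts log canonical places, so that Fano type can be pulled back along it.
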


\begin{proof}
By Lemma~\ref{lem:p1-fibrations}, we know that there is a crepant birational map $\phi\colon (X',B')\dashrightarrow (X,B)$ that only extracts log canonical places of $(X,B)$ for which $X'\rightarrow W$ is of Fano type.
Let $E$ be a divisor on $X$ which is contracted on $X'$.
Assume that $E$ is not contained in the support of $\lfloor B\rfloor$, so it is not degenerate over $Z$.
As $\phi^{-1}(\lfloor B_Z\rfloor)\subset \lfloor B\rfloor$, we conclude that the image of $E_Z$ of $E$ in $Z$ is not a divisorial log canonical place of $(Z,B_Z)$. As $\phi$ only extracts log canonical places of $(X,B)$, we conclude that it does not extract a divisor that maps to $E_Z$. 
Thus, $E$ cannot be contracted in the model $X'$.
We conclude that all the divisors extracted by $\phi^{-1}$ are log canonical places of $(X,B)$. Therefore, $X$ is of Fano type over $W$ by~\cite[Lemma 2.17.(3)]{Mor24a}.
\end{proof}

Now, we turn to prove Theorem~\ref{thm:ct-vs-toric-fib} which explains how the cluster type property behaves under strict conic fibrations. 

\begin{proof}[Proof of Theorem~\ref{thm:ct-vs-toric-fib}]
If $(Z,B_Z)$ is of cluster type,
then $(X,B)$ is of cluster type by Lemma~\ref{lem:conic-fib-ct}. 

From now on, we assume that $(X,B)$ is of cluster type and we show that $(Z,B_Z)$ is of cluster type. 
Both Calabi--Yau pairs $(X,B)$ and $(Z,B_Z)$ have index one and coregularity zero by Lemma~\ref{lem:comp-under-p1-link}.
From the strict conic fibration condition, we know that 
$f^{-1}(B_Z) \subset B$
and that $f$ has no degenerate divisors.
By Lemma~\ref{lem:MMP-char-ct}, we know that there exists a dlt modification $(Y,B_Y)\rightarrow (X,B)$ and an effective divisor $E_Y$ on $Y$ satisfying the following.
The pair $(Y,B_Y+\epsilon E_Y)$ is dlt for $\epsilon>0$ small enough
and the $(K_Y+B_Y+\epsilon E_Y)$-MMP terminates in a toric Calabi--Yau pair $(T,B_T)$ after contracting all the components of $E_Y$.
We let $E_Z$ be the image of $E_Y$ in $Z$.
Since $(Y,B_Y+\epsilon E_Y)$ is dlt, the divisor $E_Y$ does not share any component with $B_Y$, therefore $E_Z$ does not share any component with $B_Z$.
Hence, we can find a dlt modification $(Z',B_{Z'})\rightarrow (Z,B_Z)$ such that the strict transform $E_{Z'}$ of $E_Z$ in $Z'$ 
does not contain any strata of $B_{Z'}$.
Thus, the pair $(Z',B_{Z'}+\epsilon E_{Z'})$ is dlt.
By Lemma~\ref{lem:FT-conic-fib}, possibly replacing 
$(Z',B_{Z'})$ with a qdlt modification $(Z'',B_{Z''})$,
we can find a Calabi--Yau pair $(Y',B_{Y'})$ crepant birational to $(Y,B_Y)$ that admits a Mori fiber space $f'\colon Y'\rightarrow Z''$.
The fibration $f'$ has no degenerate divisors and ${f'}^{-1}(B_{Z''})\subset B_{Y'}$.
Let $(Y'',B_{Y''})\rightarrow (Y',B_{Y'})$ be a higher birational model, only extracting log canonical places, such that $Y''\dashrightarrow Y$ is a small birational map.
Let $f''\colon (Y'',B_{Y''})\rightarrow (Z'',B_{Z''})$ be the corresponding crepant fibration.
We have a commutative diagram as follows:
\[
\xymatrix{
(X,B)\ar[d]_-{f} & (Y,B_Y)\ar[l] & (Y'',B_{Y''})\ar[d]^-{f''}\ar@{-->}[l]\\
(Z,B_Z) & (Z',B_{Z'})\ar[l] & (Z'',B_{Z''}).\ar[l]
}
\]
Furthermore, we know that $B_{Y''}$ has two irreducible components over $Z''$, every degenerate divisor of $f''$ is contained in $B_{Y''}$, and 
${f''}^{-1}(B_{Z''})\subset B_{Y''}$. 
Let $E_{Y''}$ be the strict transform of $E_Y$ in $Y''$. 
On the other hand, note that $F_{Y''}:={f''}^*(E_{Z''})$ has the same support as $E_{Y''}$.
Since $E_{Y''}$ is exceptional over $T$, by the negativity lemma, we have
\begin{equation}\label{eq:base-locus}
\supp(F_{Y''})=\supp(E_{Y''}) \subseteq 
{\rm Bs}_{-}(K_{Y''}+B_{Y''}+F_{Y''}).    
\end{equation}
Let $A_{Z''}$ be an ample divisor in $Z''$.
By equality~\eqref{eq:base-locus} and~\cite[Lemma 1.27]{Mor18a}, we get
\begin{equation}\label{eq:containmnet-bs}
\supp(F_{Y''})\subseteq {\rm Bs}(K_{Y''}+B_{Y''}+F_{Y''}+\epsilon {f''}^*A_{Z''})    
\end{equation}
for $\epsilon$ small enough.

We run a $(K_{Z''}+B_{Z''}+\epsilon E_{Z''})$-MMP with scaling of $A_{Z''}$.
Every contraction of this MMP is of Fano type, 
so we can apply Lemma~\ref{lem:FT-conic-fib} to perform the two-ray game as explained below.
Let $Z''\dashrightarrow Z''_1$ be the first birational contraction in this MMP. 
Assume that $Z''\dashrightarrow Z''_1$ is a flip for the divisor $K_{Z''}+B_{Z''}+\epsilon E_{Z''}$ and let $Z''\rightarrow W$ be the corresponding flipping contraction.
We know that $Z''\rightarrow W$ is a Fano type contraction so by Lemma~\ref{lem:FT-conic-fib} we conclude that $Y''\rightarrow W$ is of Fano type as well.
Therefore, we can run a $(K_{Y''}+B_{Y''}+F_{Y''})$-MMP over $W$ which terminates with a good minimal model whose ample model is $Z''_1$.
Thus, we obtain a commutative diagram as follows:
\[
\xymatrix{ 
(Y'',B_{Y''}+F_{Y''})\ar@{-->}[rr]\ar[d]^-{f''} & & (Y''_1,B_{Y''_1}+F_{Y''_1})\ar[d]^-{f''_1}\\  
(Z'',B_{Z''}+\epsilon E_{Z''})\ar[rd]\ar@{-->}[rr] & & 
(Z''_1,B_{Z''_1}+\epsilon E_{Z''_1})  \ar[ld] \\
& W. & 
}
\]
The case of divisorial contractions is analogous. By construction 
all the degenerate divisors 
of $f''_1$ are log canonical places of $(Y''_1,B_{Y''_1})$ and 
${f''_1}^{-1}(B_{Z''_1})\subset B_{Y''_1}$ holds.
Thus, we may proceed inductively and get a sequence of MMP's:
\[
\xymatrix{
(Y'',B_{Y''}+F_{Y''})\ar@{-->}[r]\ar[d]^-{f''} & (Y''_1,B_{Y''_1}+F_{Y''_1})\ar@{-->}[r]\ar[d]^-{f''_1} & (Y''_2,B_{Y''_2}+F_{Y''_2})\ar@{-->}[r]\ar[d]^-{f''_2} & \dots \\  
(Z'',B_{Z''}+\epsilon E_{Z''})\ar@{-->}[r] & 
(Z''_1,B_{Z''_1}+\epsilon E_{Z''_1})\ar@{-->}[r] &
(Z''_2,B_{Z''_2}+\epsilon E_{Z''_2})\ar@{-->}[r] & \dots \\
}
\]
where the vertical arrows are conic fibrations. 
As usual, the subscript means push-forward in the corresponding model.
By construction, we know that for each $i\in \zz_{\geq 1}$, the divisor
$K_{Z''_i}+B_{Z''_i}+\epsilon E_{Z''_i}+t_iA_{Z_i}$ is nef.
In particular, we get that
$K_{Y''_i}+B_{Y''_i}+F_{Y''_i}+t_i{f''_i}^*A_{Z''_i}$ is nef.
Since $(Z'',B_{Z''}+\epsilon E_{Z''})$ is a pseudo-effective pair, we know that $\lim_{i\rightarrow \infty}t_i=0$.
By the containment~\eqref{eq:containmnet-bs}, we conclude that for $j\gg 0$
the divisor $F_{Y''}$ is contracted in the model $Y''_j$.
Therefore, we get $c(Y''_j,B_{Y''_j})=0$ and so $(Y''_j,B_{Y''_j})$ is a toric Calabi--Yau pair.
Thus, we deduce that $(Z''_j,B_{Z''_j})$ is a toric Calabi--Yau pair. 
Hence, the pair $(Z,B_Z)$ has a dlt modification that admits a birational contraction to a toric Calabi--Yau pair
$(Z''_j,B_{Z''_j})$. We conclude that $(Z,B_Z)$ is of cluster type. 
\end{proof}

To conclude this section, we prove Theorem~\ref{thm:reduction-to-standard-model}.

\begin{proof}[Proof of Theorem~\ref{thm:reduction-to-standard-model}]
    Let $(Y,B_{Y}) \to (X,B)$ be a $\qq$-factorial dlt modification such that
    for each prime component $P$ of $B_T$, there exists a prime component $Q$ of $B_Y$ mapping onto $P$.
    Increase all the coefficients of the degenerate divisors of $Y$ over $T$ to one,
    to obtain the log canonical pair $(Y,B_Y + D_Y)$, where the components of
    $D_Y$ are the degenerate divisors of $Y$ over $T$ that are not log canonical places.
    Let $(Y,B_Y) \dashrightarrow (X_0,B_0)$ be the crepant birational transformation induced by a $(K_Y + B_Y + D_Y)$-MMP over $T$. 
    This MMP contracts all the component of $D_Y$ (see ~\cite[Lemma 2.10]{Lai11}),
    so $(X_0,B_0) \dashrightarrow (X,B)$ only extracts log canonical places of $(X,B)$,
    and therefore $c(X_0,B_0) \leq c(X,B)$. By~\cite[Lemma 3.2]{FMM22},
    we have $K_{X_0} + B_0 \sim 0$. Therefore, we have the diagram:
    \[
        \xymatrix{
        (X,B) \ar[d]_{f} & (X_0,B_0) \ar@{-->}[l] \ar[dl]^{f_0} \\
        (T,B_T)
        }
    \]
    where $(X_0,B_0) \xrightarrow{f_0} (T,B_T)$ satisfies condition (ii).
    
    Take a dlt modification $(X_1,B_1) \to (X_0,B_0)$. Then the general fiber $(F_1,B_{F_1})$
    is dlt, and by~\cite[Lemma 2.12]{GLM23}, the divisor $B_{F_1}$ is in the smooth locus of $F_1$.
    Run a $K_{X_1}$-MMP over $T$ to obtain a crepant birational map $(X_1,B_1) \dashrightarrow (X_2,B_2)$ and a Mori Fiber Space
    $\psi \colon X_2 \to Z$ over $T$. In particular, we have $\rho(X_2/Z) = 1$.
    The general fiber $F_1$ is a projective surface with canonical singularities, and because contracting a curve increases the log discrepancies,
    we get that any curve contracted by the restriction of the $K_{X_1}$-MMP to the fiber $F_1$ maps to a smooth point. 
    Therefore the general fiber $(F_2,B_{F_2})$ over $T$, and hence over $Z$, satisfies condition (iv).
    We have the following diagram:
    \[
        \xymatrix{
         & (X_1,B_1) \ar[d] \ar@{-->}[dr]_{\phi}\\
        (X,B) \ar[d]_{f} & (X_0,B_0) \ar@{-->}[l]_{\pi} \ar[dl]^{f_0} & (X_2,B_2) \ar[d]^{\psi} \\
        (T,B_T) & & (Z,B_Z) \ar[ll]^{g}
        }
    \]
    where $(Z,B_Z)$ is obtained by applying the canonical bundle formula to $\psi$ such that $g^*(K_T + B_T) = K_Z + B_Z$.
    We have two possible cases; either $\dim Z = \dim T$ or $\dim Z = \dim T + 1$.\\

    \noindent\textit{Case 1:} We assume that $\dim Z = \dim T$.\\

    Let $E\subseteq Z$ be a prime exceptional divisor over $T$. We show that $\coeff_E B_Z = 1$. Notice that, as $T$ is smooth outside $B_T$ and $g^*(K_T + B_T) = K_Z + B_Z$, with $B_Z \geq 0$, the center of $E$ in $T$ lies on $B_T$.
    Consider $\pi_*\phi^* \psi^* E$ in $X_0$. If $\pi_*\phi^* \psi^* E$ is a divisor on $X_0$, then by condition (ii) of $f_0$, the divisor
    $\pi_*\phi^* \psi^* E$ is on the boundary $B_0$, so it appears with coefficient 1. Therefore, $E$ appears in $(Z,B_Z)$ 
    with coefficient one.
    If $\pi_*\phi^* \psi^* E$ is not a divisor, then $\pi$ extracts $\phi^* \psi^* E$ with coeffcient one and the same argument follows.
    Therefore, $g$ only extracts log canonical places of $(T,B_T)$, so $c(Z,B_Z) = c(T,B_T) = 0$, and by~\cite[Theorem 1.2]{BMSZ18}, $(Z,B_Z)$ is a toric Calabi--Yau pair. Then, $\psi \colon (X_2,B_2) \to (Z,B_Z)$ satisfies conditions (i),(ii),(ii), and (iv).\\

    \noindent\textit{Case 2:} We assume that $\dim Z =\dim T +1$.\\
    
    We argue that $\rho(Z/T) = D_Z+1$ where $D_Z$ is the number of degenerate divisors on $Z$ over $T$ minus the number of codimension one components of the union of the image of the degenerate divisors.
    Indeed, assume that $g\colon Z\to T$ has no degenerate divisors. Let $\mu\colon Z \dashrightarrow Z'$ be a $K_Z$-MMP over $T$.
    Any divisor $P$ on $Z$ maps to a divisor $Q$ on $T$. Even more, because there are no degenerate divisors, $g^* Q = \lambda P$.
    Thus, $P$ is trivial over $T$, and so it is not contracted by $\mu$.
    Hence, $Z \dashrightarrow Z'$ is small, and $1 = \rho(Z'/T) = \rho(Z/T)$.
    Therefore, we conclude that $\rho(X/T)=2+D_X$ where $D_X$ is defined similarly to $D_Z$, indeed the number of degenerate divisors of $X$ over $T$ which agrees with the number of degenerate divisors of $Z$ over $T$.
    
    We now proceed to contract the degenerate divisors of $X$ over $T$
    in order to obtain a fibration of relative Picard rank $2$ and without degenerate divisors. 
    First, repeat the argument at the beginning of the proof to obtain 
    $(X_3,B_3) \to (X_2,B_2)$ that contracts all the degenerate divisors that do not map to $B_T$.
    For each component $P$ in $B_T$, there exists $Q$ in $B_3$ such that $B_3$ maps onto $P$.
    We then run a $(K_{X_3} + B_3 - Q)$-MMP, and in the resulting $(X_4,B_4)$, the prime divisor $Q$ is the unique component of $B_4$ that maps to $P$. 
    We repeat inductively this procedure for all the components of $B_T$, and we obtain $f'\colon (X',B') \to (T,B_T)$, 
    where $(X_2,B_2) \dashrightarrow (X',B')$ contracts all the degenerate divisors of $X_2$ over $T$.
    Therefore, $\rho(X'/T) = 2$, and so $f'$ satifies conditions (i),(ii),(iii), and (iv).
\end{proof}

\section{Invariance of coregularity}
In this section, we prove a statement regarding the invariance of coregularity for Calabi--Yau pairs over toric bases. First, we prove the following proposition.

\begin{proposition}\label{prop:coreg-over-p1}
Let $(X,B)$ be a Calabi--Yau pair.
Let $\pi\colon (X,B)\rightarrow (\pp^1,\{0\}+\{\infty\})$ be a crepant fibration of relative dimension $d$.
Assume that $\pi^{-1}(0)\subset \lfloor B\rfloor$ and $\pi^{-1}(\infty)\subset \lfloor B\rfloor$.
Let $V$ be an irreducible component of $\pi^{-1}(0)$ of dimension $d$.
Let $\nu\colon W\rightarrow V$ be the normalization and $(W,B_W)$ be the pair obtained from adjunction of $(X,B)$ to $W$.
Then, there exists a non-trivial open set $U\subseteq \pp^1$ for which 
\[
{\rm coreg}(X_t,B_t) = {\rm coreg}(W,B_W) = {\rm coreg}(X,B)
\]
holds for every $t\in U$.
\end{proposition}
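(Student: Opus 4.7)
The plan is to establish the two equalities ${\rm coreg}(W,B_W)={\rm coreg}(X,B)$ and ${\rm coreg}(X_t,B_t)={\rm coreg}(X,B)$ for general $t$ separately, both relying on the equidimensionality of the dual complex of a Calabi--Yau pair~\cite[Theorem 1.5]{FS20}.

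For the first equality, I would pass to a $\qq$-factorial dlt modification $(Y,B_Y)\to(X,B)$. Since $V\subseteq\pi^{-1}(0)\subseteq\lfloor B\rfloor$, the strict transform of $V$ is a divisorial log canonical place, and divisorial adjunction produces the Calabi--Yau pair $(W,B_W)$ with $K_W+B_W\sim 0$. The dual complex $\mathcal{D}(W,B_W)$ is identified with the link of the vertex $V$ in $\mathcal{D}(X,B)$. By equidimensionality, every vertex sits inside a maximal simplex, so $\dim\mathcal{D}(W,B_W)=\dim\mathcal{D}(X,B)-1$. Combining this with $\dim W=\dim X-1$ and the formula ${\rm coreg}=\dim-\dim\mathcal{D}-1$ immediately gives ${\rm coreg}(W,B_W)={\rm coreg}(X,B)$.

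For the second equality, I would take a simultaneous log resolution $\mu\colon Y\to X$ of $(X,B)$ compatible with $\pi$ so that $\mu_t\colon Y_t\to X_t$ is a log resolution for generic $t$. The log canonical places of $(Y_t,B_{Y_t})$ are exactly the restrictions to $Y_t$ of the horizontal (i.e., $\pp^1$-dominating) log canonical places of $(Y,B_Y)$, and consequently ${\rm coreg}(X_t,B_t)$ equals one less than the minimum dimension of a horizontal log canonical center of $(X,B)$. I would then show this minimum equals $c+1$, where $c={\rm coreg}(X,B)$. The lower bound is immediate: any horizontal log canonical center $Z$ meets the log canonical divisor $\pi^{-1}(0)\subseteq\lfloor B\rfloor$ in a non-empty proper union of smaller log canonical sub-centers, so $Z$ cannot be minimal; by equidimensionality every minimal log canonical center has dimension $c$, hence $\dim Z\geq c+1$. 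For the matching upper bound, I would take a minimal log canonical center $Z_0\subseteq V$ of dimension $c$ (which exists by the first equality via $(W,B_W)$), corresponding to a top-dimensional simplex $\sigma$ of $\mathcal{D}(X,B)$ containing the vertex $V$, and extract from $\sigma$ a horizontal sub-simplex whose associated log canonical center has dimension exactly $c+1$.

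The main obstacle I anticipate is precisely this last step: ensuring that after removing $V$ and every other vertical-over-$0$ vertex from $\sigma$, the resulting horizontal sub-simplex still has codimension equal to $\dim\mathcal{D}(X,B)$ in $X$. My plan is to handle it by induction on the number of vertical-over-$0$ vertices in $\sigma$, peeling them off by iterated adjunction in the style of the first step: each application reduces the ambient dimension by one while preserving coregularity, until only horizontal log canonical places remain. Combining the two equalities then produces the desired non-empty open subset $U\subseteq\pp^1$ on which ${\rm coreg}(X_t,B_t)={\rm coreg}(W,B_W)={\rm coreg}(X,B)$.
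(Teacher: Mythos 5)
Your first equality and your lower bound are essentially sound, and they track the paper's own use of equidimensionality: after passing to a $\qq$-factorial dlt modification, the dual complex of the adjunction to the strict transform of $V$ is the link of the corresponding vertex, and since every maximal simplex of $\mathcal{D}(X,B)$ has top dimension by \cite[Theorem 1.6]{FS20}, one gets ${\rm coreg}(W,B_W)={\rm coreg}(X,B)$ (the paper routes this through the sdlt fibre pairs and \cite[Lemma 7.4]{EJM23}, but the content is the same). Likewise, every horizontal log canonical center meets $\pi^{-1}(0)\subset\lfloor B\rfloor$ in a non-empty proper union of log canonical centers, so it has dimension at least $c+1$, where $c={\rm coreg}(X,B)$.

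The gap is in the upper bound, and your own diagnosis of the obstacle is accurate but your proposed fix does not close it. You must exhibit a horizontal log canonical center of dimension exactly $c+1$, equivalently a maximal simplex of the dual complex with exactly one vertical vertex. Starting from an arbitrary maximal simplex $\sigma$ through the vertex $[V]$ and deleting its vertical vertices produces a horizontal stratum of dimension $c+j$, where $j$ is the number of deleted vertices, and nothing in your argument forces $j=1$: a minimal log canonical center may lie in the mutual intersection of several components of $\pi^{-1}(0)$, in which case this particular $\sigma$ overshoots. The remedy you sketch --- peeling off the vertical vertices by iterated adjunction --- does preserve coregularity and drop the ambient dimension, but each of these divisors lies over the point $0$, so after the first such adjunction the fibration to $\pp^1$, and with it the horizontal/vertical dichotomy your induction needs, is gone. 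The paper closes exactly this gap with a global ingredient absent from your proposal: it takes $Z$ minimal among the log canonical centers dominating $\pp^1$, applies adjunction and the canonical bundle formula to $Z\to\pp^1$, identifies the Stein base with $(\pp^1,\{0\}+\{\infty\})$ via \cite[Proposition 3.32]{Mor20c}, and invokes the connectedness theorem \cite[Theorem 1.1]{FS20} to conclude that $(Z,B_Z)$ is plt with $\lfloor B_Z\rfloor=\pi_Z^{-1}(0)+\pi_Z^{-1}(\infty)$. The components of these fibers are then minimal log canonical centers of the ambient pair of dimension $\dim Z-1$, which pins down $\dim Z=c+1$ by equidimensionality. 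Some argument of this kind, controlling which maximal simplex you work with rather than taking an arbitrary one through $[V]$, is genuinely needed.
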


\begin{proof}
Let $(Y,B_Y)\rightarrow (X,B)$ be a $\qq$-factorial dlt modification.
Note that, by definition of coregularity, we have
\begin{equation}\label{eq1}
{\rm coreg}(X,B)={\rm coreg}(Y,B_Y).
\end{equation}
For $t\in \pp^1$ general we have 
\begin{equation}\label{eq2}
{\rm coreg}(X_t,B_t)={\rm coreg}(Y_t,B_{Y_t})
\end{equation}
as $(Y_t,B_{Y_t})\rightarrow (X_t,B_t)$ is a dlt modification. 
Let $S_0$ and $S_\infty$ be the reduced fibers of $\pi_Y\colon Y\rightarrow \pp^1$
over $\{0\}$ and $\{\infty\}$, respectively.
Observe that $S_0,S_\infty \subset \lfloor B \rfloor$. 
Let $(S_0,B_{S_0})$ and $(S_\infty,B_{S_\infty})$
be the pairs obtained by adjunction of $(Y,B_Y)$ to $S_0$ and $S_\infty$, respectively.
Then, each pair $(S_0,B_{S_0})$ and $(S_\infty,B_{S_\infty})$ is a sdlt Calabi--Yau pair.
Let $V_{0,1},\dots,V_{0,\ell_0}$ be the irreducible components of $S_0$
and let $V_{\infty,1},\dots,V_{\infty,\ell_\infty}$ be the irreducible components of $S_\infty$. For each $V_{i,j}$ as above, we let $(V_{i,j},B_{V_{i,j}})$ be the dlt Calabi--Yau pair obtained by adjunction of $(Y,B_Y)$ to $V_{i,j}$.
The dual complex of a Calabi--Yau pair is equidimensional (see, e.g.,~\cite[Theorem 1.6]{FS20}), so we get 
\begin{equation}\label{eq3}
{\rm coreg}(S_i,B_{S_i}) = {\rm coreg}(V_{i,j},B_{V_{i,j}}) = \dots 
= {\rm coreg}(V_{i,\ell_i},B_{V_{i,\ell_i}})
\end{equation}
for each $i\in \{0,\infty\}$.
For some $j\in \{1,\dots,\ell_0\}$, we have a projective morphism 
$V_{0,j}\rightarrow W$ that we can factor as $p\colon V_{0,j} \rightarrow W'$
and $q\colon W'\rightarrow W$ where $p$ is a birational morphism
and $q$ is finite.
By~\cite[Lemma 7.4]{EJM23} the coregularity does not change under finite crepant morphisms, so we conclude that 
\begin{equation}\label{eq4}
{\rm coreg}(V_{0,j},B_{V_{0,j}})={\rm coreg}(W,B_W).
\end{equation} 
Let $Z$ be a minimal log canonical center of $(Y,B_Y)$
among the log canonical centers of $(Y,B_Y)$ that dominate $\pp^1$.
Let $(Z,B_Z)$ be the Calabi--Yau pair obtained by adjunction
of $(Y,B_Y)$ to $Z$.
We have an induced projective morphism $\pi_Z\colon Z\rightarrow \pp^1$ 
and the dimension of the fiber over a general point $t\in \pp^1$ equals $c={\rm coreg}(Y_t,B_{Y_t})$.
Note that $\pi_Z^{-1}(0)\subset \lfloor B_Z\rfloor$
and $\pi_Z^{-1}(\infty)\subset \lfloor B_Z\rfloor$ follow from adjunction. 
Let $p_Z \colon Z \rightarrow C$ and $q_Z\colon C\rightarrow \pp^1$ 
be the induced Stein factorization.
Applying the canonical bundle formula for $(Z,B_Z)$ to $C$, we obtain
a pair $(C,B_C)$ such that $q_Z^*(K_{\pp^1}+\{0\}+\{\infty\})=K_C+B_C$. 
Therefore, the finite morphism $q_Z$ only ramifies along $\{0\}$ and $\{\infty\}$.
By~\cite[Proposition 3.32]{Mor20c}, we conclude that $(C,B_C)\simeq (\pp^1,\{0\}+\{\infty\})$.
Therefore, every log canonical center of $(Z,B_Z)$ maps to either $\{0\}$ or $\{\infty\}$ via $p_Z$. By~\cite[Theorem 1.1]{FS20}, we conclude that $(Z,B_Z)$ is a plt pair.
In particular, we get $\lfloor B_Z\rfloor=\pi_Z^{-1}(0)+\pi_Z^{-1}(\infty)$.
Henceforth, the subvariety $\pi^{-1}_Z(0)$ has dimension $c$ and it is a minimal log canonical center of $(V_{0,k},B_{V_{0,k}})$ for some suitable $k\in \{1,\dots,\ell_0\}$.
Note that a minimal log canonical center of the dlt pair $(V_{0,k},B_{V_{0,k}})$
is also a minimal log canonical center of $(Y,B_Y)$.
By equalities~\eqref{eq3} and~\eqref{eq4}, we have
\[
{\rm coreg}(V_{0,k},B_{V_{0,k}})=
{\rm coreg}(V_{0,j},B_{V_{0,j}})=
{\rm coreg}(W,B_W).
\]
On the other hand, by equalities~\eqref{eq1} and~\eqref{eq2}, we have
\[
{\rm coreg}(X_t,B_t)=
{\rm coreg}(Y_t,B_{Y_t})=
{\rm coreg}(V_{0,k},B_{V_{0,k}})=
{\rm coreg}(Y,B_Y)=
{\rm coreg}(X,B).
\]
Thus, the two claimed equalities hold.
\end{proof}

Now, we turn to prove Theorem~\ref{thm:invariance-coreg} in the case of higher-dimensional toric bases.

\begin{proof}[Proof of Theorem~\ref{thm:invariance-coreg}]
First note that the statement is known if $\dim T=1$ as it follows from
Proposition~\ref{prop:coreg-over-p1}.
Therefore, we can proceed by induction on the dimension of the base $T$.
As above, we denote by $d$ the relative dimension of the fibration $X\rightarrow T$.

By performing a crepant birational modification of $(X,B)$ over $T$, we may assume that $f^{-1}(B_T)\subset \lfloor B\rfloor$.
Note that this transformation does not affect the coregularity of $(X,B)$ nor such of the general fiber of $f$. 
Passing to a dlt modification, we may assume that $(X,B)$ is dlt.
Let $D_T$ be a prime torus invariant divisor of $T$
and let $S$ be an irreducible component of $\lfloor B\rfloor$ that maps to $D_T$.
Let $B_{D_T}$ be the reduced sum of the torus invariant divisors of $D_T$.
Let $(S,B_S)$ be the Calabi--Yau pair obtained by adjunction of $(Y,B_Y)$ to $S$.
We have an induced projective morphism $S\rightarrow D_T$. 
Let $p_S\colon S \rightarrow D'_T$ and $q\colon D'_T \rightarrow D_T$ be the induced Stein factorization.
By the canonical bundle formula applied to $(S,B_S)$ with respect to $p_S$
we obtain a pair $(D'_T,B_{D'_T})$ for which $q^*(K_{D_T}+B_{D_T})=K_{D'_T}+B_{D'_T}$.
In particular, we know that $D'_T\rightarrow D_T$ is unramified over the torus.
By~\cite[Proposition 3.32]{Mor20c}, we conclude that $(D'_T,B_{D'_T})$ is a toric Calabi--Yau pair. 
Hence, we have a crepant fibration $p_S\colon (S,B_S)\rightarrow (D'_T,B_{D'_T})$ to a toric Calabi--Yau pair of smaller dimension than such of $T$. 
Furthermore, by construction, we have $p_S^{-1}(B_{D'_t})\subset \lfloor B_S\rfloor$.
By induction on the dimension of the base of the fibration, we conclude that  
\begin{equation}\label{eq5}
{\rm coreg}(S_t,B_{S_t})={\rm coreg}(S,B_S)={\rm coreg}(Y,B_Y)={\rm coreg}(X,B)
\end{equation}
holds for $t\in D'_T$ general.
Since $D'_T\rightarrow D_T$ is a finite \'etale morphism on the algebraic torus, the same 
equalities as in~\eqref{eq5} hold for general fibers of $(S,B_S)$ over $D_T$.

Let $\psi\colon T'\rightarrow T$ be a projective birational toric morphism 
such that $T'$ admits a $\pp^1$-fibration for which $D_{T'}:=\psi^{-1}_*(D_T)$
is a section. 
We can choose $\psi$ such that it is an isomorphism at the generic point of $D_T$.
Let $r\colon Y\rightarrow X$ be a projective birational morphism from a normal variety
such that there is a morphism $Y\rightarrow T'$ making the triangular diagram commutative.
We can choose $Y\rightarrow X$ such that it is an isomorphism over the generic point of $D_T$.
Let $(Y,B_Y)$ be the log pull-back of $(X,B)$ to $Y$
and let $\Delta_Y$ be the sum of the components of $B_Y$ which are exceptional over $X$.
Let $\Gamma_Y:={\rm Ex}(r)_{\rm red} -\Delta_Y$.
By possibly replacing $(Y,B_Y+\Gamma_Y)$ with a higher dlt model, we may assume 
that $(Y,B_Y+\Gamma_Y)$ is a dlt pair. 
We run a $(K_Y+B_Y+\Gamma_Y)$-MMP over $T'$.
This MMP terminates in a model $X'$ after contracting all the components of $\Gamma_Y$ (see, e.g.,~\cite[Proposition 5.3]{Lai11}).
Let $X'$ be the resulting model and $B'$ be the push-forward of $B_Y$ to $X'$.
Then, $B'$ is an effective divisor, the map $(X',B')\dashrightarrow (X,B)$ is a crepant birational
map that only extracts log canonical places of $(X,B)$.
Furthermore, we have a crepant fibration $\pi'\colon (X',B')\rightarrow (T',B_{T'})$.
As usual, the divisor $B_{T'}$ is the reduced torus invariant boundary of $T'$.
Note that ${\pi'}^{-1}(B_{T'})\subset \lfloor B'\rfloor$.
Further, we have that $X'\dashrightarrow X$ is an isomorphism over
the generic point of $D_T$.
Therefore, we have a commutative diagram of
crepant maps and morphisms:
\[
\xymatrix{
(X,B)\ar[d]_-{\pi} & (X',B')\ar[d]^-{\pi'} \ar@{-->}[l]_-{\psi_X} \\
(T,B_T) & (T',B_{T'})\ar[l]^-{\psi}\ar[d]^-{\pi_0} \\ 
&  (T_0,B_{T_0}) 
}
\]
where $\pi_0$ is a $\pp^1$-fibration
and $\pi_0$ is a toric fibration onto the toric Calabi--Yau pair $(T_0,B_{T_0})$.
Let $t \in T'$ be a general point.
Let $F$ be the fiber of $\pi_0\circ \pi'$ over $\pi_0(t)$.
Then, the variety $F$ is normal and we can write
\[
K_F+B_F=(K_{X'}+B')|_F
\]
where $(F,B_F)$ is a Calabi--Yau pair. 
Furthermore, we have an induced crepant fibration
\begin{equation}\label{eq:crepant-fib-F}
    \pi_F:=\pi'|_{F}\colon 
(F,B_F) \rightarrow (\pp^1;\{0\}+\{\infty\}) 
\end{equation}
for which $\pi_F^{-1}(0)\subset \lfloor B_F\rfloor$
and $\pi_F^{-1}(\infty)\subset \lfloor B_F\rfloor$. 
By construction, we have an isomorphism
\[
\psi_F\colon \pi_F^{-1}(0) \rightarrow \pi^{-1}(t_0)
\]
for some $t_0\in D_T$ general.
The intersection $\pi^{-1}(t_0)\cap S \simeq \pi_S^{-1}(t_0)$ has an irreducible component $V_0$ of dimension $d$. Let $W_0$ be the normalization of such a component.
Then, by~\eqref{eq5} there exists a boundary $B_{W_0}$ on $W_0$ for which ${\rm coreg}(W_0,B_{W_0})={\rm coreg}(X,B)$.
On the other hand, there is an irreducible component $V$ of $\pi_F^{-1}(0)$, of dimension $d$, 
such that $\psi_F(V)\cap S=V_0$.
If $W\rightarrow V$ is the normalization and $(W,B_W)$ is the pair obtained by adjunction of $(F,B_F)$ to $W$, then $(W,B_W)$ and $(W_0,B_{W_0})$ are crepant birational equivalent. Therefore, we get
\begin{equation}\label{eq6}
{\rm coreg}(W,B_W)={\rm coreg}(W_0,B_{W_0})={\rm coreg}(X,B).
\end{equation}
Now, for $t\in \pp^1$ general, we have 
\[
(F_t,B_{F_t})\simeq (X_s,B_s)
\]
where $s\in T$ is a general point. 
By Proposition~\ref{prop:coreg-over-p1} applied to the crepant fibration~\eqref{eq:crepant-fib-F}, we conclude that
\[
{\rm coreg}(X_s,B_s) = {\rm coreg}(F_t,B_{F_t}) =
{\rm coreg}(W,B_W) = {\rm coreg}(X,B).
\]
This finishes the proof of the theorem. 
\end{proof}

\section{Surface fibrations over toric varieties}

In this section, we study the cluster type property for surface fibrations over toric varieties.

\begin{proposition}\label{prop:toric-blow-up-criterion}
Let $(X,B)$ be a Calabi--Yau pair of index one and complexity two.
Let $\pi\colon (X,B)\rightarrow (T,B_T)$ be a crepant fibration to a toric Calabi--Yau pair.
Assume that $\rho(X/T)=2$ and that $(X,B)$ is of cluster type over $T$. Let $(F,B_F)$ be the log general fiber 
of $\pi$. 
Then, there exists:
\begin{enumerate} 
\item a toric blow-up $(\widetilde{F},B_{\widetilde{F}}) \rightarrow (F,B_F)$, 
\item a non-trivial effective divisor $E$ on $\widetilde{F}$ supported on the strict transform of $B_F$ with non-negative self intersection, and 
\item a node of $B_{\widetilde{F}}$ which is not contained in the support of $E$.
\end{enumerate}
\end{proposition}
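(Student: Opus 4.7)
The strategy is to translate the cluster-type assumption into a toric Calabi--Yau model of the general fiber, and then extract the required data from the combinatorics of the toric boundary cycle.

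To produce the toric blow-up $(\widetilde F, B_{\widetilde F}) \to (F, B_F)$ in (1), I would apply the MMP characterization of cluster type (Lemma~\ref{lem:MMP-char-ct}) in the relative setting over $T$. The cluster-type assumption yields a $\qq$-factorial dlt modification $(Y, B_Y) \to (X, B)$ and an effective divisor $E_Y$ on $Y$ such that a relative $(K_Y + B_Y + \epsilon E_Y)$-MMP over $T$ terminates in a toric Calabi--Yau pair over $(T, B_T)$ after contracting all components of $E_Y$. Restricting to a general point $t \in T$ produces a toric Calabi--Yau surface $(\widetilde F, B_{\widetilde F})$ crepant birational to $(F, B_F)$. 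By Lemma~\ref{lem:ct-higher-model}, I may enlarge $Y$ to arrange $\widetilde F \to F$ as a birational morphism extracting only log canonical places of $(F, B_F)$, establishing (1). The boundary $B_{\widetilde F}$ then decomposes as the strict transform of $B_F$ together with the exceptional log canonical divisors of $\widetilde F \to F$.

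For (2) I would exploit the standard-model structure. Let $\pi \colon \widetilde F \to F$ denote the toric blow-up, and write $\pi^* C = E + D$ for an effective divisor $C$ on $F$ with $\supp(C) \subseteq \supp(B_F)$, where $E$ is the strict transform of $C$ and $D$ is exceptional. The projection formula gives $E \cdot D = 0$, so $E^2 = C^2 + D^2$, with $D^2 \leq 0$ by Mumford negativity. The hypotheses $\rho(X/T) = 2$, absence of degenerate divisors, index one, and complexity two constrain the horizontal components of $B$: a suitable such component restricts to a divisor $C$ on $F$ with $C^2 \geq -D^2$, yielding $E^2 \geq 0$. For (3), the boundary $B_{\widetilde F}$ is a cycle of rational curves and its nodes are torus-fixed points. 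A node disjoint from $\supp(E)$ is found by locating two consecutive toric divisors outside $\supp(E)$, either among the exceptional divisors of $\pi$ (when $\pi$ is non-trivial) or among the strict transform components of $B_F$ on which $E$ has coefficient zero.

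The main obstacle is the verification of $E^2 \geq 0$ in step (2): both $C^2$ and the exceptional correction $D^2$ must be controlled simultaneously, requiring a delicate choice of $C$ compatible with the set of log canonical places extracted by $\pi$. This relies on the combinatorial restrictions from index one, complexity two, and relative Picard rank two on $\widetilde F$ and on the singularities of $F$, and will likely require case analysis in the spirit of the surface classifications used in the proofs of Theorems~\ref{thm:ct-standard-model-Pic-2} and~\ref{thm:ct-standard-model-Pic-1}. Once (2) is in place, locating the node in (3) becomes a bookkeeping exercise in the cyclic toric boundary of $\widetilde F$.
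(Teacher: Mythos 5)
Your proposal defers precisely the step where the proposition has its content, and the route you sketch for that step would not close the gap. You propose to take $E$ to be the strict transform of a single boundary component $C$ of $B_F$ and to verify $E^2=C^2+D^2\geq 0$ by finding $C$ with $C^2\geq -D^2$. In the cases where the proposition is actually needed (the converse direction of Theorem~\ref{thm:ct-standard-model-Pic-2}), both components $C_1,C_2$ of $B_F$ satisfy $C_i^2\leq 0$, so any strict transform under a non-trivial blow-up has strictly negative self-intersection and no single component can serve as $E$. In the paper's proof $E$ is in general a non-trivial combination $m_1\widetilde{C}_1+m_2\widetilde{C}_2$, and its non-negativity is not obtained from an inequality of the form $C^2\geq -D^2$: it is automatic because $E$ is realized as a fiber of a toric fibration $F_0\to\pp^1$ on the toric model $F_0$, chosen so that the two boundary divisors $D_1,D_2$ of $F_0$ at which $\psi_F\colon Y_F\to F_0$ blows up are adapted to that fibration (either as the two horizontal sections, or both inside one fiber). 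The node away from $E$ then comes from the fact that $\psi_F$ only blows up non-nodal points lying on $D_1\cup D_2$, so the nodes of $B_{F_0}$ away from these two components survive. (Also, as a small point, the projection formula gives $\pi^*C\cdot D=0$, hence $E\cdot D=-D^2\geq 0$, not $E\cdot D=0$; your identity $E^2=C^2+D^2$ is nonetheless correct.)

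Your step (1) also misreads what $\widetilde{F}$ is. The proposition's $\widetilde{F}$ is a toric (weighted) blow-up of $F$ at a node of $B_F$, not a toric surface, and it cannot be arranged to be the toric model of the cluster structure: since $c(X,B)=2$, the birational contraction $\psi$ from the dlt modification to the toric fibration must contract exactly two prime divisors of coefficient zero, so the toric surface $F_0$ is reached from $F$ only through a roof $Y_F\to F,\ Y_F\to F_0$ in which non-log-canonical places are extracted and contracted; Lemma~\ref{lem:ct-higher-model} cannot convert this into a morphism $\widetilde F\to F$ extracting only log canonical places. The correct mechanism, which your write-up is missing, is the structural fact that $\psi_F$ is a sequence of blow-ups of non-nodal points of $B_{F_0}$ concentrated on at most two components $D_1,D_2$ (the images of the two coefficient-zero divisors), followed by the case analysis on the position of the rays of $D_1,D_2$ in the fan of $F_0$ and the contraction of the superfluous components of $B_{Y_F}$ down to $\widetilde F$.
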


\begin{proof}
First, we give a structural description of $(F,B_F)$ as a modification of a toric Calabi--Yau surface.
Since $(X,B)$ is cluster type over $T$, there exists a diagram 
\begin{equation}
    \xymatrix{
         & (Y,B_Y) \ar[ld]_{\phi} \ar@{-->}[rd]^{\psi} &  \\
        (X,B) \ar[d]_{\pi} & & (T_0,B_{T_0}) \ar[dll]^{\pi_0} \\
        (T,B_T)
    }
    \label{diagram_clustoverT}
\end{equation}
where $\pi_0$ is a toric fibration, $\phi$ is a dlt modification, 
and $\psi$ is a birational contraction that contracts components of $B_Y$, and precisely two prime divisors with coefficient 0.
The fact that $\psi$ contracts two divisors with coefficient $0$ is because $(X,B)$ has complexity two.
By passing to a higher birational toric model of $T_0$, we may assume that $\psi$ only contracts divisors with coefficient 0.

Restricting to general fibers over $T$ in the diagram~\eqref{diagram_clustoverT}, we get
\begin{equation}
    \xymatrix{
                & (Y_F,B_F) \ar[rd]^{\psi_F} \ar[ld]_{\phi_F} & \\
        (F,B_F) & & (F_0,B_{F_0})
    } 
    \label{diagram_clustFiber}
\end{equation}
where $F_0$ is a toric Calabi--Yau surface.
The morphism $\psi_F$ is a sequence of blow-ups of points of $B_{F_0}$ which are not nodes, 
and all those points lie in at most two prime components of $B_{F_0}$.
The last statement follows as $\psi_F$ is the restriction of $\psi$, 
which is the divisorial contraction of two prime divisors $E_1$ and $E_2$.
The mentioned components of $B_{F_0}$ are the restrictions to $F_0$ of the components of $B_{T_0}$
containing the centers of $E_1$ and $E_2$.
Since $B\to B|_F$ is a bijection in components, and $c(X,B) = 2$, we conclude that
$B_F$ has two components, say $C_1$ and $C_2$.
Thus, the diagram~\eqref{diagram_clustFiber} looks as in Figure~\ref{figure_fiberDiagram}.

    \begin{figure}[htb]
    \centering
\psscalebox{0.6 0.6} 
{
\begin{pspicture}(0,-4.379554)(20.10851,6.5277805)
\psline[linecolor=black, linewidth=0.04, arrowsize=0.013cm 5.0,arrowlength=2.0,arrowinset=0.6]{<-}(5.866201,0.59982085)(8.5662,2.799821)
\psline[linecolor=black, linewidth=0.04, arrowsize=0.013cm 5.0,arrowlength=2.0,arrowinset=0.6]{->}(13.450816,2.3075132)(15.850816,-0.09248685)
\psline[linecolor=black, linewidth=0.04](9.032867,2.5155718)(9.632868,4.715572)
\psline[linecolor=black, linewidth=0.04](9.232867,4.215572)(11.432867,5.115572)
\psline[linecolor=black, linewidth=0.04](11.032867,5.115572)(13.032867,4.215572)
\psline[linecolor=black, linewidth=0.04](12.832868,4.4155717)(12.532867,2.4155717)
\psline[linecolor=black, linewidth=0.04](10.632868,1.8155718)(12.832868,2.8155718)
\psline[linecolor=black, linewidth=0.04](8.932867,2.9155717)(11.132868,1.9155718)
\psbezier[linecolor=red, linewidth=0.04](11.432867,4.4155717)(11.932867,4.815572)(12.132868,5.615572)(12.232867,6.51557175409221)
\psbezier[linecolor=red, linewidth=0.04](11.868161,4.3332186)(12.368161,4.7332187)(12.568162,5.533219)(12.668161,6.433218812915741)
\psbezier[linecolor=red, linewidth=0.04](12.185808,4.133219)(12.685808,4.533219)(12.885809,5.3332186)(12.985808,6.233218812915741)
\psbezier[linecolor=red, linewidth=0.04](9.550514,3.050866)(9.650515,2.750866)(9.150515,2.0332189)(8.450515,1.7332188129157522)
\psbezier[linecolor=red, linewidth=0.04](9.868161,2.8508658)(9.968162,2.550866)(9.468162,1.8332188)(8.768162,1.53321881291576)
\psbezier[linecolor=red, linewidth=0.04](10.185808,2.768513)(10.285809,2.468513)(9.785809,1.7508658)(9.085809,1.4508658717392917)
\psbezier[linecolor=red, linewidth=0.04](10.385809,2.568513)(10.485808,2.268513)(9.985808,1.5508659)(9.285809,1.2508658717392813)
\psline[linecolor=black, linewidth=0.04](15.15142,-2.9513965)(15.751419,-0.7513966)
\psline[linecolor=black, linewidth=0.04](15.351419,-1.2513965)(17.55142,-0.35139656)
\psline[linecolor=black, linewidth=0.04](17.151419,-0.35139656)(19.151419,-1.2513965)
\psline[linecolor=black, linewidth=0.04](18.95142,-1.0513966)(18.651419,-3.0513966)
\psline[linecolor=black, linewidth=0.04](16.75142,-3.6513965)(18.95142,-2.6513965)
\psline[linecolor=black, linewidth=0.04](15.051419,-2.5513966)(17.25142,-3.5513966)
\psbezier[linecolor=red, linewidth=0.04](4.868661,-0.6208435)(5.5351243,-0.59436524)(5.847214,-0.12983486)(6.6051307,-0.05524982495244444)
\psbezier[linecolor=red, linewidth=0.04](4.8605914,-0.6187868)(5.3806405,-0.356054)(5.763149,0.42994633)(6.5210648,0.5045313717930918)
\psbezier[linecolor=red, linewidth=0.04](4.8804836,-0.6099651)(5.455214,-0.42963406)(5.6053576,0.093818046)(6.3632736,0.1684030771082382)
\psbezier[linecolor=red, linewidth=0.04](1.9372648,-2.9086607)(1.1598068,-2.9832206)(0.7232178,-2.7630098)(0.01751963,-3.0493489977601063)
\psbezier[linecolor=red, linewidth=0.04](1.9879223,-2.8961174)(1.2104644,-2.9706771)(0.9823866,-3.4217014)(0.27668843,-3.7080405886628443)
\psbezier[linecolor=red, linewidth=0.04](1.9673065,-2.887153)(1.4672536,-3.2229824)(1.4891566,-3.762649)(0.78345853,-4.04898827211287)
\psbezier[linecolor=red, linewidth=0.04](1.9315457,-2.8935397)(1.6064758,-3.2867362)(1.9228048,-3.9767966)(1.4255297,-4.3536133681622395)
\psbezier[linecolor=black, linewidth=0.04](5.6543236,-0.8342995)(3.0580711,0.22878632)(1.6661774,-2.5434892)(1.8949695,-4.011149912445278)
\psbezier[linecolor=black, linewidth=0.04](1.0235937,-2.6172075)(3.6336308,-3.747233)(5.13744,-1.0737511)(4.9504037,0.3739477321128106)
\rput[bl](13.543124,3.5921285){$(Y_F,B_F)$}
\rput[bl](18.658508,-3.846333){$(F_0,B_{F_0})$}
\rput[bl](4.3585086,-3.146333){$(F,B_F)$}
\rput[bl](6.7123547,2.1459746){$\phi_F$}
\rput[bl](14.858508,1.5075132){$\psi_F$}
\end{pspicture}
}
\caption{Example of Diagram~\ref{diagram_clustFiber}.}
\label{figure_fiberDiagram}
\end{figure}

We now divide the proof into two cases depending on the 
two components $D_1$ and $D_2$ of $B_{F_0}$ that we blow up in $\psi_F$.\\

\textit{Case 1.} The divisors $D_1$ and $D_2$ correspond to rays $\rho$ and $-\rho$ in the fan of $F_0$.\\ 

As $D_1$ and $D_2$ correspond to $\rho$ and $-\rho$, this gives us a toric fibration $f\colon F_0 \to \pp^1$

    \begin{figure}[htb]
    \centering
\psscalebox{0.6 0.6} 
{
\begin{pspicture}(0,-4.4351945)(14.381579,4.2944174)
\psline[linecolor=black, linewidth=0.04](2.2684212,2.864806)(5.068421,2.864806)
\psline[linecolor=black, linewidth=0.04](2.668421,3.064806)(2.0684211,1.5648059)
\psline[linecolor=black, linewidth=0.04](2.168421,2.364806)(2.668421,0.86480594)
\psline[linecolor=black, linewidth=0.04](2.668421,1.5648059)(2.025564,-0.020908378)(2.025564,-0.020908378)
\psline[linecolor=black, linewidth=0.04](2.0684211,0.6648059)(2.5684211,-0.8351941)
\psline[linecolor=black, linewidth=0.04](4.571565,-0.8351941)(5.171565,0.72017014)
\psline[linecolor=black, linewidth=0.04](5.071565,-0.07982987)(4.571565,1.4201702)
\psline[linecolor=black, linewidth=0.04](4.571565,0.72017014)(5.214422,2.3058844)(5.214422,2.3058844)
\psline[linecolor=black, linewidth=0.04](5.171565,1.6201701)(4.671565,3.064806)
\psline[linecolor=black, linewidth=0.04](2.2684212,-0.63519406)(5.068421,-0.63519406)
\psline[linecolor=black, linewidth=0.04](9.083806,2.8186522)(11.883805,2.8186522)
\psline[linecolor=black, linewidth=0.04](9.483806,3.018652)(8.883805,1.5186521)
\psline[linecolor=black, linewidth=0.04](8.983806,2.3186522)(9.483806,0.81865203)
\psline[linecolor=black, linewidth=0.04](9.483806,1.5186521)(8.840948,-0.06706222)(8.840948,-0.06706222)
\psline[linecolor=black, linewidth=0.04](8.883805,0.61865205)(9.383805,-0.88134795)
\psline[linecolor=black, linewidth=0.04](11.38695,-0.88134795)(11.98695,0.6740163)
\psline[linecolor=black, linewidth=0.04](11.88695,-0.12598372)(11.38695,1.3740163)
\psline[linecolor=black, linewidth=0.04](11.38695,0.6740163)(12.029807,2.2597306)(12.029807,2.2597306)
\psline[linecolor=black, linewidth=0.04](11.98695,1.5740163)(11.48695,3.018652)
\psline[linecolor=black, linewidth=0.04](9.083806,-0.68134797)(11.883805,-0.68134797)
\psline[linecolor=black, linewidth=0.04, arrowsize=0.013cm 5.0,arrowlength=2.0,arrowinset=0.6]{->}(5.768421,1.1648059)(8.368421,1.1648059)
\psline[linecolor=black, linewidth=0.04, arrowsize=0.013cm 5.0,arrowlength=2.0,arrowinset=0.6]{->}(10.368421,-1.4351941)(10.368421,-3.435194)
\psline[linecolor=black, linewidth=0.04](8.606883,-3.935194)(12.206882,-3.935194)
\psbezier[linecolor=red, linewidth=0.04](2.968421,2.4648058)(3.2684212,2.9648058)(2.7684212,4.164806)(3.2684212,4.264805908203125)
\psbezier[linecolor=red, linewidth=0.04](4.128421,2.344806)(4.428421,2.844806)(3.928421,4.044806)(4.428421,4.144805908203125)
\psbezier[linecolor=red, linewidth=0.04](3.5284212,2.384806)(3.828421,2.884806)(3.328421,4.084806)(3.828421,4.184805908203125)
\psbezier[linecolor=red, linewidth=0.04](2.868421,-0.1351941)(2.868421,-0.9351941)(3.368421,-1.2351941)(3.0684211,-1.735194091796875)
\psbezier[linecolor=red, linewidth=0.04](3.148421,-0.17519408)(3.148421,-0.9751941)(3.648421,-1.275194)(3.348421,-1.775194091796875)
\psbezier[linecolor=red, linewidth=0.04](3.428421,-0.21519409)(3.428421,-1.015194)(3.928421,-1.3151941)(3.628421,-1.815194091796875)
\psbezier[linecolor=red, linewidth=0.04](3.7884212,-0.2551941)(3.7884212,-1.0551941)(4.288421,-1.3551941)(3.988421,-1.855194091796875)
\rput[bl](10.247369,2.933227){$D_1$}
\rput[bl](10.184211,-1.1088783){$D_2$}
\rput[bl](10.636842,-2.335194){$f$}
\psdots[linecolor=black, dotsize=0.2](9.168421,-3.935194)
\psdots[linecolor=black, dotsize=0.2](11.668421,-3.935194)
\rput[bl](9.078947,-4.429931){$0$}
\rput[bl](11.473684,-4.435194){$\infty$}
\rput[bl](12.621053,-4.0246677){$\pp^1$}
\rput[bl](12.931579,0.9069112){$(F_0,B_{F_0})$}
\rput[bl](0.0,0.8279638){$(Y_F,B_{Y_F})$}
\end{pspicture}
}
\caption{Case 1.}
\label{figure_Case1}
\end{figure}

By construction, both $f^{-1}(0)$ and $f^{-1}(\infty)$ support nef effective divisors, 
so they cannot be contracted completely.
Thus, $f^{-1}(0)$ and $f^{-1}(\infty)$ contains the strict transform of $C_1$ and $C_2$ respectively. 
Let $\tilde{C}_1$ and $\tilde{C}_2$ the strict transform of $C_1$ and $C_2$. 
Then we can contract all components of $B_{Y_F}$ which are not $D_1$, $D_2$, $\tilde{C}_1$ and $\tilde{C}_2$. 
See Figure~\ref{figure_Case1contracted}.
In this model, $\tilde{C}_1^2 = \tilde{C}_2^2 = 0$, and $B_{\tilde{F}}$ has a node outside 
both $\tilde{C}_1$ and $\tilde{C}_2$.

    \begin{figure}[htb]
    \centering
        \psscalebox{0.6 0.6} 
{
\begin{pspicture}(0,-5.465865)(12.6335535,5.3250885)
\psline[linecolor=black, linewidth=0.04](2.2684212,3.8954766)(5.068421,3.8954766)
\psline[linecolor=black, linewidth=0.04](2.668421,4.0954766)(2.0684211,2.5954766)
\psline[linecolor=black, linewidth=0.04](2.668421,2.5954766)(2.025564,1.0097624)(2.025564,1.0097624)
\psline[linecolor=black, linewidth=0.04](2.0684211,1.6954767)(2.5684211,0.19547668)
\psline[linecolor=black, linewidth=0.04](4.571565,0.19547668)(5.171565,1.7508409)
\psline[linecolor=black, linewidth=0.04](5.071565,0.9508409)(4.571565,2.450841)
\psline[linecolor=black, linewidth=0.04](5.171565,2.650841)(4.671565,4.0954766)
\psline[linecolor=black, linewidth=0.04](2.2684212,0.3954767)(5.068421,0.3954767)
\psbezier[linecolor=red, linewidth=0.04](2.968421,3.4954767)(3.2684212,3.9954767)(2.7684212,5.1954765)(3.2684212,5.295476684570312)
\psbezier[linecolor=red, linewidth=0.04](4.128421,3.3754766)(4.428421,3.8754766)(3.928421,5.0754766)(4.428421,5.175476684570312)
\psbezier[linecolor=red, linewidth=0.04](3.5284212,3.4154768)(3.828421,3.9154768)(3.328421,5.1154766)(3.828421,5.215476684570312)
\psbezier[linecolor=red, linewidth=0.04](2.868421,0.8954767)(2.868421,0.09547669)(3.368421,-0.20452331)(3.0684211,-0.7045233154296875)
\psbezier[linecolor=red, linewidth=0.04](3.148421,0.8554767)(3.148421,0.055476684)(3.648421,-0.24452332)(3.348421,-0.7445233154296875)
\psbezier[linecolor=red, linewidth=0.04](3.428421,0.81547666)(3.428421,0.015476685)(3.928421,-0.2845233)(3.628421,-0.7845233154296875)
\psbezier[linecolor=red, linewidth=0.04](3.7884212,0.7754767)(3.7884212,-0.024523316)(4.288421,-0.32452333)(3.988421,-0.8245233154296875)
\rput[bl](0.0,1.8586346){$(Y_F,B_{Y_F})$}
\psbezier[linecolor=black, linewidth=0.04](10.905668,3.7530982)(12.290283,2.922329)(12.321053,0.3069443)(11.028745,-0.40074801178596997)
\psbezier[linecolor=black, linewidth=0.04](11.98601,3.6231837)(10.109087,2.7001066)(9.767206,0.43002123)(12.259514,-0.1853633964013568)
\rput[bl](1.6376518,3.273746){$\tilde{C}_1$}
\rput[bl](5.3333783,3.2498143){$\tilde{C}_2$}
\psline[linecolor=blue, linewidth=0.04](4.571565,1.7508409)(5.214422,3.3365552)(5.214422,3.3365552)
\psline[linecolor=blue, linewidth=0.04](2.168421,3.3954766)(2.668421,1.8954767)
\psline[linecolor=black, linewidth=0.04, arrowsize=0.013cm 5.0,arrowlength=2.0,arrowinset=0.6]{->}(6.368421,1.9954767)(9.668421,1.9954767)
\rput[bl](10.296053,2.8027136){$C_1$}
\rput[bl](12.183553,1.6777135){$C_2$}
\psline[linecolor=black, linewidth=0.04](6.3631577,-2.1992602)(8.868421,-2.2045233)
\psline[linecolor=black, linewidth=0.04](6.368421,-4.204523)(8.868421,-4.204523)
\psbezier[linecolor=red, linewidth=0.04](7.273684,-2.59926)(7.573684,-2.09926)(7.073684,-0.89926016)(7.573684,-0.7992601575349488)
\psbezier[linecolor=red, linewidth=0.04](7.9073687,-2.613997)(8.207369,-2.113997)(7.7073684,-0.913997)(8.207369,-0.8139969996402112)
\psbezier[linecolor=red, linewidth=0.04](7.623158,-2.6792603)(7.9231577,-2.1792603)(7.4231577,-0.97926015)(7.9231577,-0.8792601575349488)
\psbezier[linecolor=red, linewidth=0.04](6.963158,-3.725576)(6.963158,-4.525576)(7.463158,-4.825576)(7.163158,-5.325575947008636)
\psbezier[linecolor=red, linewidth=0.04](7.243158,-3.765576)(7.243158,-4.565576)(7.743158,-4.865576)(7.4431577,-5.365575947008636)
\psbezier[linecolor=red, linewidth=0.04](7.523158,-3.8055758)(7.523158,-4.605576)(8.023158,-4.9055758)(7.723158,-5.405575947008636)
\psbezier[linecolor=red, linewidth=0.04](7.8831577,-3.845576)(7.8831577,-4.645576)(8.383158,-4.9455757)(8.083158,-5.445575947008636)
\psline[linecolor=blue, linewidth=0.04](6.568421,-1.9045234)(6.568421,-4.604523)
\psline[linecolor=blue, linewidth=0.04](8.568421,-1.9045234)(8.568421,-4.5045233)
\psline[linecolor=black, linewidth=0.04, arrowsize=0.013cm 5.0,arrowlength=2.0,arrowinset=0.6]{->}(3.968421,-1.2045233)(5.968421,-3.0045233)
\psline[linecolor=black, linewidth=0.04, arrowsize=0.013cm 5.0,arrowlength=2.0,arrowinset=0.6]{->}(9.168421,-3.0045233)(11.168421,-1.4045234)
\rput[bl](13.252631,1.7638978){$(F,B_F)$}
\rput[bl](9.457894,-3.7097864){$(\tilde{F},B_{\tilde{F}})$}
\end{pspicture}
}   
    \caption{Contraction of Case 1. In blue, the components corresponding to the strict transform of $C_1$ and $C_2$}
    \label{figure_Case1contracted}
    \end{figure}
\mbox{}

\textit{Case 2.} The rays corresponding to $D_1$ and $D_2$ in the fan of $F_0$ are not in the same line.\\ 

Passing to a higher toric model of $F_0$, 
we may assume that there is a toric fibration $f\colon F_0 \to \pp^1$ 
such that $D_1$ and $D_2$ are supported on $f^{-1}(\infty)$. 
Indeed, let $\rho_1$ and $\rho_2$ be the generators of the rays of the fan of $F_0$ corresponding to $D_1$ and $D_2$, choose a linear form $L$ on $\qq^2$ such that $L(\rho_1)>0$ and $L(\rho_2)>0$. 
Then, we perform the toric blow-up on $F_0$ which corresponds to adding the ray given by $L^\perp$. Thus, we obtain a diagram as in Figure~\ref{figure_Case2}.

    \begin{figure}[htb]
    \centering
    \psscalebox{0.6 0.6} 
{
\begin{pspicture}(0,-5.041286)(14.381579,2.476142)
\psline[linecolor=black, linewidth=0.04](2.2684212,2.258714)(5.068421,2.258714)
\psline[linecolor=black, linewidth=0.04](2.668421,2.458714)(2.0684211,0.958714)
\psline[linecolor=black, linewidth=0.04](2.168421,1.758714)(2.668421,0.258714)
\psline[linecolor=black, linewidth=0.04](2.668421,0.958714)(2.025564,-0.6270003)(2.025564,-0.6270003)
\psline[linecolor=black, linewidth=0.04](2.0684211,0.058713987)(2.5684211,-1.441286)
\psline[linecolor=black, linewidth=0.04](4.571565,-1.441286)(5.171565,0.11407821)
\psline[linecolor=black, linewidth=0.04](5.071565,-0.6859218)(4.571565,0.8140782)
\psline[linecolor=black, linewidth=0.04](4.571565,0.11407821)(5.214422,1.6997925)(5.214422,1.6997925)
\psline[linecolor=black, linewidth=0.04](5.171565,1.0140783)(4.671565,2.458714)
\psline[linecolor=black, linewidth=0.04](2.2684212,-1.241286)(5.068421,-1.241286)
\psline[linecolor=black, linewidth=0.04](9.083806,2.2125602)(11.883805,2.2125602)
\psline[linecolor=black, linewidth=0.04](9.483806,2.4125602)(8.883805,0.91256016)
\psline[linecolor=black, linewidth=0.04](8.983806,1.7125602)(9.483806,0.21256015)
\psline[linecolor=black, linewidth=0.04](9.483806,0.91256016)(8.840948,-0.6731541)(8.840948,-0.6731541)
\psline[linecolor=black, linewidth=0.04](8.883805,0.012560143)(9.383805,-1.4874399)
\psline[linecolor=black, linewidth=0.04](11.38695,-1.4874399)(11.98695,0.067924365)
\psline[linecolor=black, linewidth=0.04](11.88695,-0.73207563)(11.38695,0.76792437)
\psline[linecolor=black, linewidth=0.04](11.38695,0.067924365)(12.029807,1.6536386)(12.029807,1.6536386)
\psline[linecolor=black, linewidth=0.04](11.98695,0.96792436)(11.48695,2.4125602)
\psline[linecolor=black, linewidth=0.04](9.083806,-1.2874398)(11.883805,-1.2874398)
\psline[linecolor=black, linewidth=0.04, arrowsize=0.013cm 5.0,arrowlength=2.0,arrowinset=0.6]{->}(5.768421,0.558714)(8.368421,0.558714)
\psline[linecolor=black, linewidth=0.04, arrowsize=0.013cm 5.0,arrowlength=2.0,arrowinset=0.6]{->}(10.368421,-2.041286)(10.368421,-4.041286)
\psline[linecolor=black, linewidth=0.04](8.606883,-4.541286)(12.206882,-4.541286)
\rput[bl](11.170445,1.5579042){$D_1$}
\rput[bl](12.030364,-0.17650868){$D_2$}
\rput[bl](10.636842,-2.941286){$f$}
\psdots[linecolor=black, dotsize=0.2](9.168421,-4.541286)
\psdots[linecolor=black, dotsize=0.2](11.668421,-4.541286)
\rput[bl](9.078947,-5.0360227){$0$}
\rput[bl](11.473684,-5.041286){$\infty$}
\rput[bl](12.621053,-4.6307597){$\pp^1$}
\rput[bl](12.931579,0.30081925){$(F_0,B_{F_0})$}
\rput[bl](0.0,0.22187188){$(Y_F,B_{Y_F})$}
\psbezier[linecolor=red, linewidth=0.04](4.542105,-0.9360229)(4.9421053,-0.83602285)(5.368421,-1.441286)(5.568421,-1.3412860107421876)
\psbezier[linecolor=red, linewidth=0.04](4.6578946,2.0008192)(5.0578947,2.1008193)(5.568421,2.058714)(5.768421,2.1587139892578127)
\psbezier[linecolor=red, linewidth=0.04](4.6473684,1.9060824)(5.0473685,2.0060823)(5.5578947,1.9639771)(5.7578945,2.0639771471525497)
\psbezier[linecolor=red, linewidth=0.04](4.6368423,1.8113456)(5.036842,1.9113456)(5.5473685,1.8692403)(5.7473683,1.9692403050472864)
\psbezier[linecolor=red, linewidth=0.04](4.731579,1.7166088)(5.131579,1.8166087)(5.642105,1.7745035)(5.8421054,1.8745034629420234)
\psbezier[linecolor=red, linewidth=0.04](4.6368423,-0.8202334)(5.036842,-0.7202334)(5.463158,-1.3254966)(5.663158,-1.2254965370579771)
\psbezier[linecolor=red, linewidth=0.04](4.6263156,-0.70444393)(5.0263157,-0.6044439)(5.4526315,-1.209707)(5.6526318,-1.1097070633737673)
\rput[bl](3.468421,2.483421){$C_0$}
\rput[bl](3.3473685,-1.6744737){$C_{\infty}$}
\end{pspicture}
}
    \caption{Case 2.}
    \label{figure_Case2}
    \end{figure}

As argued above, $\tilde{C}_1$ must lie in $f^{-1}(0)$, while $\tilde{C}_2$ could lie anywhere in $B_{Y_F}$.
We contract all components of $B_{Y_F}$ that are not in $\{\tilde{C}_1,\tilde{C}_2,C_0,C_{\infty}\}$.
Let $f^{-1}(0) = E$, which is contained in the strict transform of $B_F$. 
Then we have $E^2 = 0$, and there is a node outside of $E$.
In Figure~\ref{figure_Case2cases}, we list the possible combinatorial cases.

    \begin{figure}[hbt]
    \centering
        \hfill\begin{minipage}{.45\textwidth}
            \psscalebox{0.6 0.6} 
{
\begin{pspicture}(0,-3.2383022)(13.651969,1.256197)
\psbezier[linecolor=red, linewidth=0.04](12.880771,-2.6954577)(13.217372,-2.838268)(13.262064,-2.983099)(13.489188,-3.054340722467432)
\psbezier[linecolor=red, linewidth=0.04](12.895253,-2.6685398)(13.0129,-2.9999125)(13.264254,-3.1611333)(13.370135,-3.210152952892681)
\psbezier[linecolor=red, linewidth=0.04](12.869659,-2.6565688)(13.128483,-2.7604904)(13.472748,-2.8010476)(13.55329,-2.8910928592195795)
\psbezier[linecolor=red, linewidth=0.04](12.880186,0.27573258)(13.076264,0.6345561)(12.959339,0.8591175)(13.15934,0.9591175083282434)
\psbezier[linecolor=red, linewidth=0.04](12.84613,0.24374083)(13.199071,0.4849173)(13.113519,0.90555716)(13.337049,0.895753215242587)
\psbezier[linecolor=red, linewidth=0.04](12.85129,0.23543587)(13.25129,0.34013528)(13.134365,0.6884738)(13.514757,0.8010163731373285)
\psbezier[linecolor=red, linewidth=0.04](12.867596,0.24250244)(13.267595,0.34250244)(13.433023,0.55922073)(13.633023,0.659220707502651)
\psline[linecolor=black, linewidth=0.04](2.2684212,0.7472496)(5.068421,0.7472496)
\psline[linecolor=black, linewidth=0.04](2.668421,0.9472496)(2.0684211,-0.5527504)
\psline[linecolor=blue, linewidth=0.04](2.168421,0.2472496)(2.668421,-1.2527504)
\psline[linecolor=black, linewidth=0.04](2.668421,-0.5527504)(2.025564,-2.1384647)(2.025564,-2.1384647)
\psline[linecolor=black, linewidth=0.04](2.0684211,-1.4527504)(2.5684211,-2.9527504)
\psline[linecolor=black, linewidth=0.04](4.571565,-2.9527504)(5.171565,-1.3973862)
\psline[linecolor=black, linewidth=0.04](5.071565,-2.1973863)(4.571565,-0.6973862)
\psline[linecolor=blue, linewidth=0.04](4.571565,-1.3973862)(5.214422,0.18832812)(5.214422,0.18832812)
\psline[linecolor=black, linewidth=0.04](5.171565,-0.4973862)(4.671565,0.9472496)
\psline[linecolor=black, linewidth=0.04](2.2684212,-2.7527504)(5.068421,-2.7527504)
\psline[linecolor=black, linewidth=0.04, arrowsize=0.013cm 5.0,arrowlength=2.0,arrowinset=0.6]{->}(5.768421,-0.9527504)(8.368421,-0.9527504)
\rput[bl](0.0,-1.2895925){$(Y_F,B_{Y_F})$}
\psbezier[linecolor=red, linewidth=0.04](4.542105,-2.4474874)(4.9421053,-2.3474872)(5.368421,-2.9527504)(5.568421,-2.8527503967285157)
\psbezier[linecolor=red, linewidth=0.04](4.6578946,0.48935488)(5.0578947,0.5893549)(5.568421,0.5472496)(5.768421,0.6472496032714844)
\psbezier[linecolor=red, linewidth=0.04](4.6473684,0.39461803)(5.0473685,0.49461803)(5.5578947,0.45251277)(5.7578945,0.5525127611662214)
\psbezier[linecolor=red, linewidth=0.04](4.6368423,0.2998812)(5.036842,0.39988118)(5.5473685,0.35777593)(5.7473683,0.4577759190609584)
\psbezier[linecolor=red, linewidth=0.04](4.731579,0.20514435)(5.131579,0.30514434)(5.642105,0.26303908)(5.8421054,0.3630390769556954)
\psbezier[linecolor=red, linewidth=0.04](4.6368423,-2.3316977)(5.036842,-2.2316978)(5.463158,-2.836961)(5.663158,-2.7369609230443053)
\psbezier[linecolor=red, linewidth=0.04](4.6263156,-2.2159083)(5.0263157,-2.1159084)(5.4526315,-2.7211714)(5.6526318,-2.6211714493600953)
\rput[bl](3.468421,0.926197){$C_0$}
\rput[bl](3.3473685,-3.2316978){$C_{\infty}$}
\psline[linecolor=black, linewidth=0.04](9.668421,0.2472496)(13.168421,0.2472496)
\psline[linecolor=black, linewidth=0.04](9.668421,-2.6527505)(13.168421,-2.6527505)
\psline[linecolor=blue, linewidth=0.04](9.968421,0.5472496)(9.968421,-3.0527503)
\psline[linecolor=blue, linewidth=0.04](12.868421,0.5472496)(12.868421,-3.0527503)
\rput[bl](9.416516,-1.279179){$\tilde{C}_1$}
\rput[bl](12.997469,-1.2696551){$\tilde{C}_2$}
\rput[bl](11.235564,0.34939247){$C_0$}
\rput[bl](11.168898,-3.0887027){$C_{\infty}$}
\end{pspicture}
}
        \end{minipage} \hfill \hfill
        \begin{minipage}{.35\textwidth}
            In this case $\tilde{C}_1^2 = 0$ and $\tilde{C}_2$ could be negative. There is a node outside $\tilde{C}_1$.
        \end{minipage}\hfill
        \vskip 11pt
        \hfill \begin{minipage}{0.45\textwidth}
            \psscalebox{0.6 0.6} 
{
\begin{pspicture}(0,-3.235)(20.558445,1.2528946)
\psbezier[linecolor=red, linewidth=0.04](12.832617,0.241957)(13.128768,0.045213956)(13.164472,-0.17602132)(13.432045,-0.15288112483442887)
\psbezier[linecolor=red, linewidth=0.04](12.83811,0.23291972)(12.951262,-0.07148653)(13.20711,-0.25967374)(13.312993,-0.30869335525967867)
\psbezier[linecolor=red, linewidth=0.04](12.8080225,0.26736274)(13.066846,0.16344117)(13.379649,0.024007494)(13.496147,0.01935550245836623)
\psbezier[linecolor=red, linewidth=0.04](12.793966,0.30724302)(12.570223,0.6495)(12.317061,0.65244615)(12.323332,0.8759650214603645)
\psbezier[linecolor=red, linewidth=0.04](12.80609,0.2621178)(12.760114,0.6871117)(12.348914,0.81028587)(12.463089,1.002706415585343)
\psbezier[linecolor=red, linewidth=0.04](12.815847,0.2627456)(12.912398,0.6647903)(12.550123,0.72617376)(12.630503,1.1146362782555568)
\psbezier[linecolor=red, linewidth=0.04](12.817316,0.28045568)(12.91801,0.6802817)(12.805066,0.92842907)(12.811336,1.1519479662884045)
\psline[linecolor=blue, linewidth=0.04](2.2684212,0.74394727)(5.068421,0.74394727)
\psline[linecolor=black, linewidth=0.04](2.668421,0.9439473)(2.0684211,-0.5560527)
\psline[linecolor=blue, linewidth=0.04](2.168421,0.2439473)(2.668421,-1.2560527)
\psline[linecolor=black, linewidth=0.04](2.668421,-0.5560527)(2.025564,-2.141767)(2.025564,-2.141767)
\psline[linecolor=black, linewidth=0.04](2.0684211,-1.4560527)(2.5684211,-2.9560528)
\psline[linecolor=black, linewidth=0.04](4.571565,-2.9560528)(5.171565,-1.4006885)
\psline[linecolor=black, linewidth=0.04](5.071565,-2.2006886)(4.571565,-0.7006885)
\psline[linecolor=black, linewidth=0.04](4.571565,-1.4006885)(5.214422,0.1850258)(5.214422,0.1850258)
\psline[linecolor=black, linewidth=0.04](5.171565,-0.5006885)(4.671565,0.9439473)
\psline[linecolor=black, linewidth=0.04](2.2684212,-2.7560527)(5.068421,-2.7560527)
\psline[linecolor=black, linewidth=0.04, arrowsize=0.013cm 5.0,arrowlength=2.0,arrowinset=0.6]{->}(5.768421,-0.9560527)(8.368421,-0.9560527)
\rput[bl](0.0,-1.2928948){$(Y_F,B_{Y_F})$}
\psbezier[linecolor=red, linewidth=0.04](4.542105,-2.4507895)(4.9421053,-2.3507895)(5.368421,-2.9560528)(5.568421,-2.8560527038574217)
\psbezier[linecolor=red, linewidth=0.04](4.6578946,0.48605257)(5.0578947,0.58605254)(5.568421,0.5439473)(5.768421,0.6439472961425782)
\psbezier[linecolor=red, linewidth=0.04](4.6473684,0.39131573)(5.0473685,0.49131572)(5.5578947,0.44921046)(5.7578945,0.5492104540373152)
\psbezier[linecolor=red, linewidth=0.04](4.6368423,0.29657888)(5.036842,0.39657888)(5.5473685,0.35447362)(5.7473683,0.4544736119320521)
\psbezier[linecolor=red, linewidth=0.04](4.731579,0.20184204)(5.131579,0.30184203)(5.642105,0.25973678)(5.8421054,0.3597367698267891)
\psbezier[linecolor=red, linewidth=0.04](4.6368423,-2.335)(5.036842,-2.2350001)(5.463158,-2.8402631)(5.663158,-2.7402632301732113)
\psbezier[linecolor=red, linewidth=0.04](4.6263156,-2.2192106)(5.0263157,-2.1192105)(5.4526315,-2.7244737)(5.6526318,-2.6244737564890017)
\rput[bl](3.468421,0.92289466){$C_0$}
\rput[bl](3.3473685,-3.2350001){$C_{\infty}$}
\psline[linecolor=blue, linewidth=0.04](9.668421,0.2439473)(13.168421,0.2439473)
\psline[linecolor=black, linewidth=0.04](9.751755,-2.9227195)(13.070802,0.49156633)
\psline[linecolor=blue, linewidth=0.04](9.968421,0.5439473)(9.968421,-3.0560527)
\rput[bl](9.416516,-1.2824813){$\tilde{C}_1$}
\rput[bl](11.547469,-1.5991479){$C_{\infty}$}
\rput[bl](10.691324,0.37031826){$C_0 = \tilde{C}_2$}
\end{pspicture}
}
        \end{minipage} \hfill \hfill
        \begin{minipage}{0.35\textwidth}
            In this case, $\tilde{C}_1^2 =0$ and $\tilde{C}_2^2$ could be negative. There is a node outside $\tilde{C}_1$.
        \end{minipage}
        \vskip 11pt
        \hfill \begin{minipage}{0.45\textwidth}
            \psscalebox{0.6 0.6} 
{
\begin{pspicture}(0,-3.235)(24.768808,1.2528946)
\psbezier[linecolor=red, linewidth=0.04](13.387839,-1.1263092)(13.541886,-1.4467512)(13.459536,-1.6551694)(13.701367,-1.7719959552687987)
\psbezier[linecolor=red, linewidth=0.04](13.387943,-1.1368846)(13.329673,-1.4563704)(13.453446,-1.7488643)(13.519417,-1.8451030096714431)
\psbezier[linecolor=red, linewidth=0.04](13.379679,-1.0919036)(13.549067,-1.3134825)(13.746711,-1.5931695)(13.844476,-1.6566935968138177)
\psbezier[linecolor=red, linewidth=0.04](13.374612,-1.0267965)(13.150867,-0.68453944)(12.897707,-0.68159336)(12.903976,-0.458074458558894)
\psbezier[linecolor=red, linewidth=0.04](13.386735,-1.0719217)(13.340759,-0.64692783)(12.92956,-0.5237536)(13.043735,-0.33133306443391347)
\psbezier[linecolor=red, linewidth=0.04](13.396492,-1.071294)(13.493043,-0.66924924)(13.130768,-0.60786575)(13.211147,-0.2194032017637005)
\psbezier[linecolor=red, linewidth=0.04](13.397961,-1.0535837)(13.498654,-0.6537578)(13.385711,-0.40561038)(13.391981,-0.1820915137308532)
\psline[linecolor=black, linewidth=0.04](2.2684212,0.74394727)(5.068421,0.74394727)
\psline[linecolor=black, linewidth=0.04](2.668421,0.9439473)(2.0684211,-0.5560527)
\psline[linecolor=blue, linewidth=0.04](2.168421,0.2439473)(2.668421,-1.2560527)
\psline[linecolor=black, linewidth=0.04](2.668421,-0.5560527)(2.025564,-2.141767)(2.025564,-2.141767)
\psline[linecolor=blue, linewidth=0.04](2.0684211,-1.4560527)(2.5684211,-2.9560528)
\psline[linecolor=black, linewidth=0.04](4.571565,-2.9560528)(5.171565,-1.4006885)
\psline[linecolor=black, linewidth=0.04](5.071565,-2.2006886)(4.571565,-0.7006885)
\psline[linecolor=black, linewidth=0.04](4.571565,-1.4006885)(5.214422,0.1850258)(5.214422,0.1850258)
\psline[linecolor=black, linewidth=0.04](5.171565,-0.5006885)(4.671565,0.9439473)
\psline[linecolor=black, linewidth=0.04](2.2684212,-2.7560527)(5.068421,-2.7560527)
\psline[linecolor=black, linewidth=0.04, arrowsize=0.013cm 5.0,arrowlength=2.0,arrowinset=0.6]{->}(5.768421,-0.9560527)(8.368421,-0.9560527)
\rput[bl](0.0,-1.2928948){$(Y_F,B_{Y_F})$}
\psbezier[linecolor=red, linewidth=0.04](4.542105,-2.4507895)(4.9421053,-2.3507895)(5.368421,-2.9560528)(5.568421,-2.8560527038574217)
\psbezier[linecolor=red, linewidth=0.04](4.6578946,0.48605257)(5.0578947,0.58605254)(5.568421,0.5439473)(5.768421,0.6439472961425782)
\psbezier[linecolor=red, linewidth=0.04](4.6473684,0.39131573)(5.0473685,0.49131572)(5.5578947,0.44921046)(5.7578945,0.5492104540373152)
\psbezier[linecolor=red, linewidth=0.04](4.6368423,0.29657888)(5.036842,0.39657888)(5.5473685,0.35447362)(5.7473683,0.4544736119320521)
\psbezier[linecolor=red, linewidth=0.04](4.731579,0.20184204)(5.131579,0.30184203)(5.642105,0.25973678)(5.8421054,0.3597367698267891)
\psbezier[linecolor=red, linewidth=0.04](4.6368423,-2.335)(5.036842,-2.2350001)(5.463158,-2.8402631)(5.663158,-2.7402632301732113)
\psbezier[linecolor=red, linewidth=0.04](4.6263156,-2.2192106)(5.0263157,-2.1192105)(5.4526315,-2.7244737)(5.6526318,-2.6244737564890017)
\rput[bl](3.468421,0.92289466){$C_0$}
\rput[bl](3.3473685,-3.2350001){$C_{\infty}$}
\psline[linecolor=blue, linewidth=0.04](8.868421,-1.3560528)(10.329711,0.50201184)
\psline[linecolor=black, linewidth=0.04](9.751755,-2.9227195)(13.74177,-0.9148853)
\psline[linecolor=blue, linewidth=0.04](8.832937,-0.7979882)(9.968421,-3.0560527)
\rput[bl](8.85288,-2.2824812){$\tilde{C}_1$}
\rput[bl](11.729287,-2.4536934){$C_{\infty}$}
\psline[linecolor=black, linewidth=0.04](9.979975,0.42133972)(13.745807,-1.2524928)
\rput[bl](11.522966,-0.15605271){$C_0$}
\rput[bl](9.195694,-0.2560527){$\tilde{C}_2$}
\end{pspicture}
}
        \end{minipage} \hfill \hfill
        \begin{minipage}{0.35\textwidth}
            In this case, $f^{-1}(0) = m_1\tilde{C}_1 + m_2\tilde{C}_2$ is supported on the strict transform of $B_F$, has self-intersection equal to 0, and there is a node outside of it.
        \end{minipage}
    \caption{The three possible subcases in Case 2.}
    \label{figure_Case2cases}
    \end{figure}

\end{proof}

\begin{proof}[Proof of Theorem~\ref{thm:ct-standard-model-Pic-2}]
First assume that (1), (2), and (3) holds.
We know that $|B| = |B_T| + |B_{\text{hor}}|$, as there are no degenerate divisors.
We compute 
\begin{align*}
    2   &= c(X,B) \\
        &= \dim X + \rho(X) - |B|\\
        &= \dim T + 2 + \rho(T) + 2 - |B_T| + |B_{\text{hor}}|\\
        &= 4 - |B_{\text{hor}}|,
\end{align*}
so $B_{\text{hor}}$ has two components.
Property (2) implies that $B_{\text{hor}} \to B_F$ induces a bijection of the components.
Then $B_F$ consists of two components, $C_1$ and $C_2$. 
By (3), we can assume that $C_1$ and $C_2$ intersect.
Figure~\ref{figure_GenFiber} depicts how the general fiber $(F,B_F)$ looks like.

\begin{figure}[htb]
    \centering
    \hfill \begin{minipage}{.35\textwidth}
    \psscalebox{0.6 0.6} 
{
\begin{pspicture}(-0.5,-3.1611795)(5.0503063,1.1337886)
\psbezier[linecolor=black, linewidth=0.04](0.011699419,-2.4418018)(2.0497947,-2.2679923)(3.8986042,-1.9918017)(4.1021757,1.1224839782714844)
\psbezier[linecolor=black, linewidth=0.04](0.6116994,-3.150135)(0.76408035,-0.23584935)(2.8676517,0.4153411)(5.03908,0.5486744544619568)
\rput[bl](1.3616995,-0.10025412){$C_1$}
\rput[bl](3.4188423,-1.9478731){$C_2$}
\rput[bl](-0.5,-0.8812065){$(F,B_F)$}
\end{pspicture}
}
\end{minipage} 
\begin{minipage}{0.35\textwidth}
    with $C_1^2 >0$.
\end{minipage} \hfill
    \caption{General fiber assuming (1) and (2) and (3).}
    \label{figure_GenFiber}
\end{figure}

Now, write $B_{\text{hor}} = B_1 + B_2$, with $B_i|F = C_i$. 
We run a $(K_X + B - \epsilon B_2)$--MMP over $T$,
which induces a $(K_F + B_F - \epsilon C_2)$--MMP on the general fiber.
After finitely many small modifications, which induce the identity on $F$,
we have a divisorial contraction or a Mori Fiber Space.
We study both the divisorial contraction case and the one of a Mori Fiber Space.\\

\textit{Case 1: Divisorial contraction.} As we are assuming the non-existence of degenerate divisors, 
the divisorial contraction induces a divisorial contraction on $F$. 
On the one hand, $C_1$ cannot be contracted on $F$, as $C_1^2 > 0$. 
On the other hand, we cannot contract $C_2$. Indeed, if $C_2$ is contracted, then $C_2^2 < 0$,
but $(K_F + B_F - \epsilon C_2)\cdot C_2 = -\epsilon C_2^2 >0$, which contradicts the MMP.
Therefore, the divisor contracted has coefficient 0, 
and we obtain that $(X,B)$ has complexity 1.\\

\textit{Case 2: Mori Fiber Space.} This induces a fibration $F\xrightarrow{g} \pp^1$, 
coming from $X \to Z \to T$.
Note that $C_1$ is horizontal over $\pp^1$, as $C_1^2 > 0$.
Also, because $-(K_F + B_F - \epsilon C_2)$ is ample over $\pp^1$ 
and $K_F + B_F \equiv 0$, then $C_2$ is ample over $\pp^1$.
We conclude that $(X,B)$ has two horizontal components over $Z$, and
$(Z,B_Z)$ has complexity $\leq 1$, and therefore $(X,B)$ has complexity $\leq 1$.

As in both cases, $(X,B)$ has complexity $\leq 1$, the main result of~\cite{ELY25} implies that $(X,B)$ is cluster type.
\\

For the converse, notice that if (1) fails, 
then the pair is not cluster type by Theorem~\ref{thm:invariance-coreg}.
If (2) fails, then $(X,B)$ cannot be cluster type due to monodromy reasons.
Indeed, by~\cite[Theorem 3.4]{AdSFM24}, if the pair $(X,B)$ is of cluster type, then it is of cluster type over $(T,B_T)$ and hence there is a crepant birational map $\phi\colon (Y,B_Y)\dashrightarrow (X,B)$ where $(Y,B_Y)$ is a toric Calabi--Yau pair. 
Moreover, $\phi$ only extracts log canonical places so the strict transform to $Y$ of every component of $B$ is a component of $B_Y$. Since the torus invariant divisors on a toric morphism do not have monodromy, we conclude that the restriction of a component of $B$ to a general fiber of $X\rightarrow T$ must be irreducible. 

For the rest of the proof assume then that (1) and (2) hold, 
(3) does not hold, and $(X,B)$ is cluster type.
Then, the general fiber looks as in Figure~\ref{figure_GenFiberNot3}.

\begin{figure}[htb]
    \centering
    \hfill \begin{minipage}{.35\textwidth}
    \psscalebox{0.6 0.6} 
{
\begin{pspicture}(0,-3.1611795)(5.974321,1.1337886)
\psbezier[linecolor=black, linewidth=0.04](0.9357143,-2.4418018)(2.9738095,-2.2679923)(4.822619,-1.9918017)(5.0261903,1.1224839782714844)
\psbezier[linecolor=black, linewidth=0.04](1.5357143,-3.150135)(1.6880952,-0.23584935)(3.7916667,0.4153411)(5.963095,0.5486744544619568)
\rput[bl](2.2857144,-0.10025412){$C_1$}
\rput[bl](4.3428574,-1.9478731){$C_2$}
\rput[bl](0.0,-0.8812065){$(F,B_F)$}
\psdots[linecolor=black, dotsize=0.2](1.6380953,-2.3669207)
\psdots[linecolor=black, dotsize=0.2](4.958095,0.44107923)
\rput[bl](1.7353333,-2.749016){$p$}
\rput[bl](5.1113334,0.08298398){$q$}
\end{pspicture}
}
\end{minipage} 
\begin{minipage}{0.35\textwidth}
    with $C_1^2 \leq 0$ and $C_2^2 \leq 0$.
\end{minipage} \hfill
    \caption{General fiber assuming (1) and (2), but not (3).}
    \label{figure_GenFiberNot3}
\end{figure}

By Proposition~\ref{prop:toric-blow-up-criterion}, 
we can find a toric blow-up $\tilde{F} \to F$, 
a non-trivial effective divisor $m_1\tilde{C}_1 + m_2\tilde{C}_2$, 
supported on the strict transforms of $C_1$ and $C_2$, 
with $(m_1\tilde{C}_1 + m_2\tilde{C}_2)^2 = 0$, 
and a node outside $m_1\tilde{C}_1 + m_2\tilde{C}_2$.

First, notice that if $m_1 = 0$, 
then $m_2^2\tilde{C}_2 = 0$. But $\tilde{C}_2$ is the strict transform of $C_2$, 
and since $C_2^2 \leq 0$, $\tilde{C}_2^2 < 0$, which is a contradiction.

Now, assume that $m_1 > 0$ and $m_2 > 0$. 
Then, because there is a node outside $m_1\tilde{C}_1 + m_2\tilde{C}_2$, 
there must exist at least two curves in $B_{\tilde{F}} - \tilde{C}_1 - \tilde{C}_2$. 
We contract all the curves in $B_{\tilde{F}}$ except for $\tilde{C}_1$, $\tilde{C}_2$, 
and some other curve $E$ to obtain the diagram of Figure~\ref{figure_toricBlowup}

\begin{figure}[htb]
    \centering
    \psscalebox{0.6 0.6} 
{
\begin{pspicture}(0,-6.2191663)(20.822107,4.3774996)
\rput[bl](0.0,1.991748){$(F,B_F)$}
\psbezier[linecolor=black, linewidth=0.04](12.728772,3.3808331)(14.275438,4.6341662)(16.702105,2.6341665)(18.008772,4.207499796549477)
\psbezier[linecolor=black, linewidth=0.04](13.208772,-0.37916687)(14.7021055,-1.1525002)(17.555439,1.1141665)(18.408772,0.5008331298828052)
\psline[linecolor=black, linewidth=0.04](13.422105,3.9941664)(12.515439,2.2074997)
\psline[linecolor=black, linewidth=0.04](12.488772,2.8474998)(13.448771,1.2741665)
\psline[linecolor=black, linewidth=0.04](13.288772,2.0741665)(12.622106,0.39416647)
\psline[linecolor=black, linewidth=0.04](12.7021055,1.0341665)(13.662106,-0.7525002)
\psline[linecolor=black, linewidth=0.04](17.608772,4.1541667)(18.515438,2.9541664)
\psline[linecolor=black, linewidth=0.04](18.488771,3.2474997)(17.822105,1.9408331)
\psline[linecolor=black, linewidth=0.04](17.822105,2.4741664)(18.622105,1.0874997)
\psline[linecolor=black, linewidth=0.04](18.595438,1.7008331)(17.635439,0.10083313)
\rput[bl](15.368772,3.9674997){$\tilde{C}_1$}
\rput[bl](15.768772,-0.7525002){$\tilde{C}_2$}
\rput[bl](19.582106,2.2074997){$(\tilde{F},B_{\tilde{F}})$}
\psbezier[linecolor=black, linewidth=0.04](7.368772,-1.6058335)(10.648772,-2.0325003)(11.582106,-3.8725002)(12.008772,-5.019166870117192)
\psbezier[linecolor=black, linewidth=0.04](6.275439,-4.5925)(8.542106,-5.3925004)(10.782105,-5.3925004)(12.035439,-4.032500203450527)
\psline[linecolor=black, linewidth=0.04](6.542105,-5.2591667)(8.195438,-1.3125002)
\psdots[linecolor=black, dotsize=0.2](11.688772,-4.3525004)
\rput[bl](10.115438,-2.4058335){$\tilde{C}_1$}
\rput[bl](8.728772,-5.8191667){$\tilde{C}_2$}
\rput[bl](11.422105,-4.8858337){$q$}
\rput[bl](6.648772,-3.3658335){$E$}
\psline[linecolor=black, linewidth=0.04, arrowsize=0.013cm 5.0,arrowlength=2.0,arrowinset=0.6]{->}(11.395438,1.6208332)(7.662105,1.9674999)
\psline[linecolor=black, linewidth=0.04, arrowsize=0.013cm 5.0,arrowlength=2.0,arrowinset=0.6]{->}(6.0354385,-2.6458335)(4.7821054,0.3674998)
\psline[linecolor=black, linewidth=0.04, arrowsize=0.013cm 5.0,arrowlength=2.0,arrowinset=0.6]{->}(15.288772,-1.4191669)(12.355438,-3.259167)
\rput[bl](4.808772,-1.3925002){$\pi$}
\rput[bl](11.555439,-6.2191668){$(F',B_{F'})$}
\psbezier[linecolor=black, linewidth=0.04](2.2184346,2.2438087)(2.828961,1.61223)(5.6581106,1.0470477)(6.7739,2.857574020571066)
\psbezier[linecolor=black, linewidth=0.04](2.1909041,1.6591934)(3.180378,3.2381408)(5.48807,3.515064)(6.9617543,2.1045375833241047)
\psdots[linecolor=black, dotsize=0.2](2.4775438,2.03557)
\psdots[linecolor=black, dotsize=0.2](6.4985967,2.4566226)
\rput[bl](4.2670174,3.1513596){$C_1$}
\rput[bl](4.414386,1.172412){$C_2$}
\rput[bl](2.4564912,1.6355699){$p$}
\rput[bl](6.3933334,1.9724121){$q$}
\end{pspicture}
}
    \caption{Diagram of toric blow-ups when $m_1 > 0$ and $m_2 > 0$.}
    \label{figure_toricBlowup}
\end{figure}

As we contracted at least one curve intersecting 
$m_1\tilde{C}_1 + m_2\tilde{C}_2$, 
we have $(m_1\tilde{C}_1 + m_2\tilde{C}_2)^2 > 0$ in $F'$.

We claim that this cannot happen. 
Indeed, the map $F' \xrightarrow{\pi} F$ corresponds to a toric blow-up at $p$ 
of the ideal $(x^\alpha, y^\beta)$, 
where the local coordinates correspond to $C_1 = \{x = 0\}$ and $C_2 = \{y = 0\}$. 
A local computation shows that 
\[
    \tilde{C}_1\cdot E = \frac{1}{\beta}, 
    \qquad \tilde{C}_2\cdot E = \frac{1}{\alpha}, 
    \qquad \text{ and} 
    \qquad \tilde{C}_1\cdot \tilde{C}_2 = 1.
\]

Hence,
\[
    C_1^2   = \tilde{C}_1\cdot \pi^*(C_1) = \tilde{C}_1\cdot (\tilde{C}_1 + \alpha E) = \tilde{C}_1^2 + \frac{\alpha}{\beta},
\]
so $\tilde{C}_1^2 = C_1^2 - \frac{\alpha}{\beta} \leq -\frac{\alpha}{\beta}$.
Similarly, $\tilde{C}_2^2 \leq -\frac{\beta}{\alpha}$. Now we compute
\begin{align*}
    (m_1\tilde{C}_1 + m_2\tilde{C}_2)^2 &= m_1^2\tilde{C}_1^2 + 2m_1m_2 + m_2^2\tilde{C}_2^2\\
                                        &\leq -\frac{\alpha}{\beta}m_1^2 + 2m_1m_2 - \frac{\beta}{\alpha} m_2^2\\
                                        &= -\frac{1}{\alpha\beta}( \alpha m_1 - \beta m_2)^2 \leq 0.
\end{align*}
which is a contradiction.
\end{proof}

Finally we deduce Theorem~\ref{thm:ct-standard-model-Pic-1} from Theorem~\ref{thm:ct-standard-model-Pic-2}.

\begin{proof}[Proof of Theorem~\ref{thm:ct-standard-model-Pic-1}]
A similar computation as in the proof of 
Theorem~\ref{thm:ct-standard-model-Pic-2} yields that 
$B$ has one horizontal component. 

If (1), (2) and (3) hold, then on the general fiber $F$, $B_F$ 
is a nodal curve. Let $(X',B') \to (X,B)$ be the blow-up 
of the node on the general fiber. 
Then $(X',B')$ satisfies Conditions (1) and (2) of Theorem~\ref{thm:ct-standard-model-Pic-2}. 
As the volume of the fiber is $\geq 5$, the self-intersection of the strict transform of $B_F$ is $\geq 5 - 4 = 1$, 
so $(X',B')$ satisfies Condition (3) of 
Theorem~\ref{thm:ct-standard-model-Pic-2}. 
Therefore, the pair $(X',B')$ is of cluster type, so by Lemma~\ref{lem:ct-under-bir-comp2}, $(X,B)$ is also of cluster type.

Conversely, if $(X,B)$ is of cluster type, then the same argument as in the proof of Theorem~\ref{thm:ct-standard-model-Pic-2} holds to conclude that it satisfies Conditions (1) and (2). Let $(X',B') \to (X,B)$ be again the blow-up of the node in the general fiber. Then $(X',B')$ is of cluster type, and the strict transform of the nodal curve $B_F$ has positive self-intersection. Therefore, $\vol(F) = B_F^2 \geq 5$. 
\end{proof}

\section{Gorenstein del Pezzo surfaces}

In this section we prove Theorem \ref{thm:dP-Gor}.
In the proof, we follow the notation for singularities of Gorenstein del Pezzo surfaces (see, e.g.~\cite{MZ88}). First, we prove a theorem that allows us to detect whether any Calabi--Yau pair in such surfaces is of cluster type or not. 

\begin{theorem}\label{thm:complete-Gor-dP}
Let $X$ be a Gorenstein del Pezzo surface of Picard rank one. 
Asume that $(X,B)$ is a cluster type pair.
Then $X$ has only $A$-type singularities 
and precisely one of the following statements holds:
\begin{enumerate}
    \item $B$ has at least two components, 
    \item $B$ is an irreducible nodal curve, $B\subset X^{\rm sm}$, and $\vol(X)\geq 5$, 
    \item $B$ is an irreducible nodal curve with an $A_n$ singularity at the node
    and $\vol(X)\geq 3$, 
    \item $B$ is an irreducible nodal curve with an $A_n$ singularity at the node, 
    $n\geq 2$, and $\vol(X)=2$, or
    \item $B$ is an irreducible nodal curve with an $A_n$ singularity at the node, 
    $n\geq 4$, and $\vol(X)=1$. 
\end{enumerate}
\end{theorem}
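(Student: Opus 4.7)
The plan is to analyze the cluster-type property of $(X,B)$ through the trivial crepant fibration $f\colon X\to\pt$ over the toric base $(\pt,0)$, applying Theorem~\ref{thm:reduction-to-standard-model} and the cluster-type criteria of Theorems~\ref{thm:ct-standard-model-Pic-1} and~\ref{thm:ct-standard-model-Pic-2}. The fact that $X$ has only $A$-type singularities follows from the classification of singularities of cluster-type surfaces in~\cite{EFM24}. Since $X$ has Picard rank one, $c(X,B)=3-|B|$, so if $|B|\ge 2$ then $c(X,B)\le 1$ and case (1) is immediate; we therefore assume $B$ is reduced and irreducible, so $c(X,B)=2$. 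Cluster type forces coregularity zero, which combined with the irreducibility of $B$ on the surface $X$ forces the dual complex of $(X,B)$ to contain an edge; hence $B$ must be a nodal rational curve whose node lies either at a smooth point of $X$ or at an $A_n$ singular point $p\in X^{\rm sing}$.

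In the subcase $B\subset X^{\rm sm}$ the fibration $(X,B)\to(\pt,0)$ is already a standard model of relative Picard rank one with log general fiber $(X,B)$, so Theorem~\ref{thm:ct-standard-model-Pic-1} directly gives $\vol(X)\ge 5$, yielding case (2). In the remaining subcase let $\pi\colon X_{\rm min}\to X$ be the minimal resolution. Since $X$ is Du Val, $K_{X_{\rm min}}=\pi^*K_X$, and since the two local branches of $B$ at $p$ meet the ends $E_1,E_n$ of the exceptional $A_n$-chain transversely, a direct chain-intersection computation yields the log pull-back formula
\[
\pi^*(K_X+B)=K_{X_{\rm min}}+\widetilde B+E_1+\cdots+E_n,
\]
so each $E_i$ is a log canonical place of $(X,B)$ with coefficient one, $\widetilde B+\sum_i E_i$ is an anticanonical cycle of length $n+1$ on $X_{\rm min}$, and $\widetilde B^2=\vol(X)-2$.

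The analysis now splits on $n$. For $n=1$, extracting $E_1$ from $X$ produces a crepant birational model $(X_1,B_1)\to(X,B)$ of relative Picard rank two over $(\pt,0)$ with $B_1=\widetilde B+E_1\subset X_1^{\rm sm}$; cluster type is preserved by Lemma~\ref{lem:ct-under-bir-comp2}, and the positive self-intersection condition of Theorem~\ref{thm:ct-standard-model-Pic-2} applied to $\widetilde B$ gives $\vol(X)-2\ge 1$, i.e.\ case (3). For $n\ge 2$ a single extraction leaves the second branch of $\widetilde B$ passing through a residual $A_{n-1}$ singularity, so instead we exploit the cycle $\widetilde B+\sum E_i$ on $X_{\rm min}$ to construct a $\pp^1$-fibration $X_{\rm min}\dashrightarrow\pp^1$ with two non-adjacent cycle components as disjoint sections; this produces a strict conic fibration after contracting the unneeded exceptional curves. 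Applying Theorem~\ref{thm:ct-vs-toric-fib} reduces cluster type of $(X,B)$ to that of a one-dimensional toric pair, and then Theorem~\ref{thm:ct-standard-model-Pic-1} applied to the induced standard model over $(\pp^1,\{0\}+\{\infty\})$ translates the volume of its log general fiber, a function of $\vol(X)$ and the length $n+1$ of the cycle, into the sharp thresholds: case (3) when $\vol(X)\ge 3$, case (4) when $\vol(X)=2$ with $n\ge 2$, and case (5) when $\vol(X)=1$ with $n\ge 4$.

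The main obstacle is precisely this $n\ge 2$ analysis: one must verify that the cycle-induced pencil produces genuinely disjoint sections exactly when the stated numerical inequalities on $(\vol(X),n)$ hold, and that the excluded configurations $(\vol(X)=2,n=1)$ and $(\vol(X)=1,n\le 3)$ do not admit any cluster-type structure on $(X,B)$, which is established by running the MMP characterization of Lemma~\ref{lem:MMP-char-ct} on $(X_{\rm min},\widetilde B+\sum E_i+\epsilon E)$ for an auxiliary divisor $E$ and showing it cannot terminate on a toric Calabi--Yau pair in those cases.
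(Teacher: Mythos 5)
Your setup is sound and matches the paper's strategy: the $A$-type restriction from~\cite{EFM24}, the reduction $|B|\geq 2\Rightarrow c(X,B)\leq 1\Rightarrow$ case (1), the identification of the node with the unique singular point of $X$ on $B$, the direct application of Theorem~\ref{thm:ct-standard-model-Pic-1} when $B\subset X^{\rm sm}$, and the log pull-back $\pi^*(K_X+B)=K_{X_{\min}}+\widetilde B+\sum E_i$ with $\widetilde B^2=\vol(X)-2$ are all correct, and your $n=1$ analysis via Theorem~\ref{thm:ct-standard-model-Pic-2} is a clean (and correct) way to get the threshold $\vol(X)\geq 3$ in that subcase.

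The gap is the entire $n\geq 2$ analysis, which is where cases (3)--(5) and the exclusions actually live. First, the proposed mechanism is internally inconsistent: if you succeed in building a strict conic fibration from a crepant model of $(X,B)$ onto the toric pair $(\pp^1,\{0\}+\{\infty\})$, then Theorem~\ref{thm:ct-vs-toric-fib} (indeed already Lemma~\ref{lem:conic-fib-ct}) immediately gives that $(X,B)$ \emph{is} of cluster type, with no threshold left to extract; and Theorem~\ref{thm:ct-standard-model-Pic-1} cannot then be ``applied to the induced standard model over $(\pp^1,\{0\}+\{\infty\})$'' because that theorem concerns fibrations of relative dimension \emph{two}, whereas a conic fibration from a surface has relative dimension one and its general fiber carries no volume information. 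So the numerical thresholds must come from \emph{when the construction is possible}, which you never pin down. Second, the negative direction is not established: Lemma~\ref{lem:MMP-char-ct} is an existential criterion (cluster type iff \emph{some} dlt modification and \emph{some} auxiliary divisor $E_Y$ work), so running the MMP on one chosen $(X_{\min},\widetilde B+\sum E_i+\epsilon E)$ and seeing it fail proves nothing. For comparison, the paper obtains the thresholds as follows: for $\vol(X)\geq 3$ it blows up $B_Y\cap E_1$ until $\widetilde B^2=0$, so that $\widetilde B$ becomes a fiber of a conic fibration with two exceptional curves horizontal, and Lemma~\ref{lem:conic-fib-ct} applies; for $\vol(X)=1$ it blows \emph{down} the chain $B_Y,E_1,\dots,E_{n-1}$ inside the cycle to reach a Picard-rank-one standard model $(Z,E_{Z,n})$ with $E_{Z,n}^2=n+1$, so Theorem~\ref{thm:ct-standard-model-Pic-1} gives cluster type iff $n+1\geq 5$, i.e.\ $n\geq 4$ (this also settles the non-cluster-type direction); and for $\vol(X)=2$, $n=1$ it applies Theorem~\ref{thm:ct-standard-model-Pic-2} to $(Y,B_Y+E_1)$ with $B_Y^2=0$, $E_1^2=-2$. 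You would need to supply arguments of this kind to close cases (3) for $n\geq 2$, (4), and (5).
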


\begin{proof}
If $B$ has at least two components, then we have $c(X,B)\leq 1$.
Therefore, $(X,B)$ is a cluster type by Theorem~\ref{thm:comp1-ct}.
Thus, we are in Case (1). 
From now on, we assume that $B$ is an irreducible nodal curve. 
If $B$ is contained in the smooth locus, then 
$(X,B)$ is a standard model over a point.
Therefore, we may apply Theorem~\ref{thm:ct-standard-model-Pic-1} 
to conclude that $(X,B)$ is of cluster type if and only $\vol(X)\geq 5$. 

Now, we assume that $B$ is an irreducible nodal curve that passes through a singular point of $X$.
By performing adjunction to the normalization of $B$,  we see that 
the singular point of $X$ must be in the node of $B$.
Thus, $B$ is an irreducible nodal curve with an $A_n$ singularity at the node. 
First, we assume that $\vol(X)\geq 3$. 
Let $\pi\colon Y\rightarrow X$ be the minimal resolution of the $A_n$ singularity at the node of $B$. 
Write $\pi^*(K_X+B)=K_Y+B_Y+E_1+\dots+E_n$. 
Note that $B_Y^2=B^2-2$ and so $B_Y^2\geq 1$. 
Let $\psi\colon Z\rightarrow Y$ be a sequence of blow-ups at the intersection of $B_Y$ and $E_1$ such that the strict transform
$B_Z$ of $B_Y$ in $Z$ satisfies that $B_Z^2=0$. 
By abuse of notation, we let $E_1,\dots,E_k$ be the exceptional divisors of $Z$ over $X$. 
Therefore, we have 
\[
\psi^*(\pi^*(K_X+B))=
K_Z+B_Z+E_1+\dots+E_k 
\]
where $k\geq n$ and $k\geq 2$. 
Therefore, we have an induced fibration 
$Z\rightarrow \pp^1$ that contracts $B_Z$. 
The Calabi--Yau pair $(Z,B_Z+E_1+\dots+E_k)$ has two horizontal components over $\pp^1$, namely, $E_1$ and $E_k$. 
Therefore, by Lemma~\ref{lem:conic-fib-ct}, we conclude that $(Z,B_Z+E_1+\dots+E_k)$
and so $(X,B)$ are of cluster type.
Here, we are in Case (3). 

Assume that $B$ is an irreducible nodal curve that passes through a singular point of $X$ and $\vol(X)=2$. 
Let $A_n$ be the singular point of $X$ at the node of $B$.
Let $\pi\colon Y\rightarrow X$ be the minimal resolution of such $A_n$ singularity.
Let $K_Y+B_Y+E_1+\dots+E_n=\pi^*(K_X+B)$
If $n\geq 2$, then $B_Y^2=0$ and
there is a node of $E_1+\dots+E_n$ that is not contained in $B_Y$. 
Therefore, we conclude that $(Y,B_Y+E_1+\dots+E_n)$ 
is of cluster type and so $(X,B)$
is of cluster type.  
If $n=1$, then $B_Y^2=0$ and $E_1^2=-2$. Therefore, the pair $(Y,B_Y+E_1)$ is a standard model over  a point.
By Theorem~\ref{thm:ct-standard-model-Pic-2}, we conclude that $(Y,B_Y+E_1)$ is not of cluster type. 
This proves Case (4). 

Finally, assume that $B$ is an irreducible nodal curve with an $A_n$ singularity at the node
and $\vol(X)=1$.
We show that $(X,B)$ is of cluster type if and only if $n\geq 4$. 
Let $\pi\colon Y\rightarrow X$ be the minimal resolution of the $A_n$-singularity at the node. 
Write $\pi^*(K_X+B)=K_Y+B_Y+E_1+\dots+E_n$. 
Here, we have $B_Y^2=-1$. 
Therefore, we can inductively 
blow-down $B_Y,E_1,\dots,E_{n-1}$
to smooth points.
Let $(Z,E_{Z,n})$ be the Calabi--Yau pair 
obtained by this procedure,
where $E_{Z,n}$ is the push-forward of $E_{n}$ to $Z$. 
Therefore, $Z$ is smooth along $E_{Z,n}$, $\rho(Z)=1$, $E_{Z,n}$ is a nodal curve, 
and $E_{Z,n}^2=-2+(n-1)+4 = n+1$.
Therefore, the pair $(Z,E_{Z,n})$ 
is an standard model. 
We then can apply Theorem~\ref{thm:ct-standard-model-Pic-1}
to conclude that $(Z,E_{Z,n})$
is of cluster type if and only if 
$n\geq 4$. 
Since the crepant birational map
$(X,B)\dashrightarrow (Z,E_{Z,n})$ 
only extracts and contracts
log canonical places, 
we conclude that $(X,B)$ is
of cluster type if and only if $n\geq 4$.
\end{proof}

In the following proof, we will see 
that all the cases of Theorem~\ref{thm:complete-Gor-dP} do actually happen.

\begin{proof}[Proof of Theorem~\ref{thm:dP-Gor}]
Let $X$ be a Gorenstein del Pezzo surface of Picard rank one. 
If $X$ is cluster type, then $X$ only has $A$-type singularities \cite[Lemma 5.1]{EFM24}. By \cite{MZ88}, we know that if $X$ is toric, then either $X\simeq \pp^2$
or their singularity types are $3A_2$, $A_3 + 2A_1$, $A_2 + A_1$ or $A_1$.
All these toric surfaces satisfy that $\vol(X)>1$.
Therefore, we are left with checking the theorem for the following classes of singularities: \\
\[
\begin{array}{|c| c | c|} 
\hline  
{\rm vol}(X)\geq 2 &  {\rm vol}(X)=1 \text{ and } |X^{\rm sing}|\leq 3 & {\rm vol}(X)=1 \text{ and }
|X^{\rm sing}|=4 \\
\hline 
   X(A_4) &  X(A_8)  &  X(2A_1+2A_3) \\
   X(A_7) &  X(A_1+A_7) & X(4A_2) \\
   X(A_1+A_5) &  X(2A_4) &  \\
   X(A_2+A_5) &  X(A_1+A_2+A_5)  &  \\
   X(A_1+2A_3) & & \\ 
    \hline 
\end{array}
\]
\hspace{0.2cm}
Recall that the isomorphism class of these surfaces are determined by their singularities.
We will proceed as follows.
First, we will argue that $X(4A_2)$
and $X(2A_1+2A_3)$ are not of cluster type. 
Secondly, we want to prove that all other surfaces in the table are of cluster type. 

First, we argue that $X(2A_1+2A_3)$ is not of cluster type. Throughout this paragraph we set $X:=X(2A_1+2A_3)$.  
Assume that $(X,B)$ is a Calabi--Yau pair
of index one and coregularity zero. 
First, we argue that $B$ has one component. Note that $B$ cannot have three components as in such case $(X,B)$ would be toric, leading to a contradiction. 
Assume that $B=B_{1}+B_{2}$ is the prime decomposition. As $(X,B)$ is Calabi--Yau and $X$ has only $A$-type singularities, the curves $B_{1}$ and $B_{2}$ intersect at two points $p$ and $q$. Note that 
\begin{equation}\label{ineq}
1 = (-K_{X})^2 = (B_{1}+B_{2})^2 
> 2B_{1}B_{2} = 2\left( 
\frac{1}{n+1}+\frac{1}{m+1}
\right) 
\end{equation} 
where $A_n$ and $A_m$ are the singularities of $X$ at the intersection points of $B_{1}$ and $B_{2}$. As $n,m\in \{1,3\}$, we get that the right hand side of~\eqref{ineq} is at least one, leading to a contradiction. 
Thus, we conclude that $B$ must be a nodal irreducible curve. As $(X,B)$ is Calabi--Yau, the only singularities of $X$ can happen at the node of $B$.
Hence, we conclude that $(X,B)$ is not of cluster type by Theorem~\ref{thm:complete-Gor-dP}.(2) and (5). We conclude that the surface $X(2A_1+2A_3)$ is not of cluster type. 

Now, we turn to argue that $X(4A_2)$ is not of cluster type. The argument is analogous to the previous case. Throughout this paragraph, we set
$X:=X(4A_2)$. Assume that $(X,B)$ is a Calabi--Yau pair of index one and coregularity zero. 
First, we argue that $B$ is irreducible.
Otherwise, we can write $B=B_1+B_2$ for its irreducible components, and so 
$1=(-K_X)^2=(B_1+B_2)^2 > 2B_1B_2 \geq 4/3$, leading to a contradiction. 
Thus, $B$ is an irreducible nodal curve and
$X$ can only have $A$-type singularities at the node of $B$. 
By Theorem~\ref{thm:complete-Gor-dP}.(2) and (5), we conclude that $X$ is not of cluster type.

We turn to prove that all the other surfaces in the previous table are of cluster type. 
Throughout the rest of the proof we consider the minimal resolutions of Gorenstein
del Pezzo of Picard rank 1 and of volume 1 listed in \cite[Figure 1 \& Figure 1']{MZ88}.
First, consider $X(A_7)$ and its minimal resolution $Y(A_7)$.
The smooth surface $Y(A_7)$ admits a contraction to $Y(A_1+A_5)$, the minimal resolution
of $X(A_1+A_5)$, as shown in Figure \ref{fig:contractionA7toA1A5},
\begin{figure}[hbt]
    \centering
    \psscalebox{0.7 0.7} 
{
\begin{pspicture}(0,-4.46)(19.15,3.42)
\psline[linecolor=black, linewidth=0.04](1.55,2.94)(5.55,2.94)
\psline[linecolor=black, linewidth=0.04](0.55,1.94)(0.55,-2.06)
\psline[linecolor=red, linewidth=0.04](1.55,-3.06)(5.55,-3.06)
\psline[linecolor=black, linewidth=0.04](6.55,-2.06)(6.55,1.94)
\psline[linecolor=black, linewidth=0.04](4.55,3.34)(6.95,0.94)
\psline[linecolor=black, linewidth=0.04](2.55,3.34)(0.15,0.94)
\psline[linecolor=black, linewidth=0.04](2.55,-3.46)(0.15,-1.06)
\psline[linecolor=black, linewidth=0.04](4.55,-3.46)(6.95,-1.06)
\psline[linecolor=black, linewidth=0.04, linestyle=dashed, dash=0.17638889cm 0.10583334cm](0.15,-0.06)(2.55,-0.06)
\psline[linecolor=black, linewidth=0.04, linestyle=dashed, dash=0.17638889cm 0.10583334cm](4.55,-0.06)(7.05,-0.06)
\psline[linecolor=black,linewidth=0.04, arrowsize=0.013cm 5.0,arrowlength=2.0,arrowinset=0.6]{->}(8.15,-0.06)(11.15,-0.06)
\rput[bl](3.3921053,-3.5573685){$C$}
\rput[bl](0.75,-2.61){$E_1$}
\rput[bl](0.0,-0.76){$E_2$}
\rput[bl](0.9,2.19){$E_3$}
\rput[bl](3.3,3.09){$E_4$}
\rput[bl](5.75,2.24){$E_5$}
\rput[bl](6.7,0.34){$E_6$}
\rput[bl](6.45,-2.66){$E_7$}
\rput[bl](1.4,0.04){$F_1$}
\rput[bl](5.35,0.09){$F_2$}
\psline[linecolor=black, linewidth=0.04](13.55,2.94)(17.55,2.94)
\psline[linecolor=black, linewidth=0.04, linestyle=dashed, dash=0.17638889cm 0.10583334cm](12.55,1.94)(12.55,-2.06)
\psline[linecolor=red, linewidth=0.04](13.55,-3.06)(17.55,-3.06)
\psline[linecolor=black, linewidth=0.04](18.55,-2.06)(18.55,1.94)
\psline[linecolor=black, linewidth=0.04](16.55,3.34)(18.95,0.94)
\psline[linecolor=black, linewidth=0.04](14.55,3.34)(12.15,0.94)
\psline[linecolor=black, linewidth=0.04](14.55,-3.46)(12.15,-1.06)
\psline[linecolor=black, linewidth=0.04](16.55,-3.46)(18.95,-1.06)
\psline[linecolor=black, linewidth=0.04, linestyle=dashed, dash=0.17638889cm 0.10583334cm](16.55,-0.06)(19.05,-0.06)
\rput[bl](15.392105,-3.5573685){$C$}
\rput[bl](12.75,-2.61){$E_1$}
\rput[bl](11.9,-0.56){$E_2$}
\rput[bl](12.9,2.19){$E_3$}
\rput[bl](15.3,3.09){$E_4$}
\rput[bl](17.75,2.24){$E_5$}
\rput[bl](18.7,0.34){$E_6$}
\rput[bl](18.45,-2.66){$E_7$}
\rput[bl](17.35,0.09){$F_2$}
\rput[bl](3.05,-4.46){$Y(A_7)$}
\rput[bl](14.65,-4.46){$Y(A_1 + A_5)$}
\end{pspicture}
}
    \caption{Contraction from $Y(A_7)$ to $Y(A_1 + A_5)$}
    \label{fig:contractionA7toA1A5}
\end{figure}
where the solid lines are $(-2)$-curves
and the dotted lines are $(-1)$-curves.
The curve $C$ in the picture has self-intersection zero.
Consider the $1$-complement of $Y$ 
given by $B_{Y(A_7)}:=E_1+\dots+E_7+C$. 
The push-forward $B_{X(A_7)}$ of $B_{Y(A_7)}$
to $X(A_7)$ is a nodal curve with an $A_7$-singularity at the node. 
By Theorem~\ref{thm:complete-Gor-dP}.(4), we conclude that the pair $(X(A_7),B_{X(A_7)})$ is a cluster type pair.  
Let $B_{X(A_1+A_5)}$ be the push-forward of $B_{Y(A_7)}$ to $X(A_1+A_5)$.
Then, the boundary $B_{X(A_1+A_5)}$ has two components, and so $(X(A_1+A_5),B_{X(A_1+A_5)})$
is of cluster type by Theorem~\ref{thm:complete-Gor-dP}.(1).

Consider the surface $X(A_8)$
and its minimal resolution $Y(A_8)$. 
In the surface $Y(A_8)$, we can contract a
$(-1)$-curve to obtain $Y(A_2+A_5)$, the minimal resolution of $X(A_2+A_5)$, 
as shown in Figure \ref{fig:contractionA8toA2A5} 
\begin{figure}[htb]
    \centering
    \psscalebox{0.7 0.7} 
{
\begin{pspicture}(0,-4.46)(18.426641,3.42)
\psline[linecolor=black, linewidth=0.04](1.026641,2.94)(6.026641,2.94)
\psline[linecolor=black, linewidth=0.04](0.026641006,1.34)(2.026641,-1.46)
\psline[linecolor=black, linewidth=0.04](0.026641006,-3.06)(5.026641,-3.06)
\psline[linecolor=black, linewidth=0.04](5.826641,-1.86)(5.826641,0.74)
\psline[linecolor=black, linewidth=0.04](4.026641,1.94)(6.026641,0.04)
\psline[linecolor=black, linewidth=0.04](2.026641,3.34)(0.026641006,0.44)
\psline[linecolor=black, linewidth=0.04](0.026641006,-3.56)(2.026641,-0.56)
\psline[linecolor=black, linewidth=0.04](4.026641,-3.46)(6.126641,-1.06)
\psline[linecolor=black,linewidth=0.04, arrowsize=0.013cm 5.0,arrowlength=2.0,arrowinset=0.6]{->}(7.626641,-0.06)(10.626641,-0.06)
\rput[bl](2.2687464,-3.5573685){$E_4$}
\rput[bl](0.326641,-2.31){$E_5$}
\rput[bl](0.776641,-0.96){$E_6$}
\rput[bl](0.376641,1.89){$E_7$}
\rput[bl](3.276641,3.09){$E_8$}
\rput[bl](5.126641,1.14){$E_1$}
\rput[bl](5.976641,-0.66){$E_2$}
\rput[bl](4.926641,-2.86){$E_3$}
\rput[bl](2.526641,-4.46){$Y(A_8)$}
\psline[linecolor=black, linewidth=0.04, linestyle=dashed, dash=0.17638889cm 0.10583334cm](5.626641,3.24)(4.026641,0.94)
\psline[linecolor=black, linewidth=0.04, linestyle=dashed, dash=0.17638889cm 0.10583334cm](0.126641,-0.66)(1.926641,1.54)
\rput[bl](1.526641,0.54){$F_1$}
\psline[linecolor=black, linewidth=0.04, linestyle=dashed, dash=0.17638889cm 0.10583334cm](5.526641,-2.36)(4.026641,-0.96)
\rput[bl](5.226641,2.14){$F_2$}
\rput[bl](3.926641,-1.76){$F_3$}
\psline[linecolor=black, linewidth=0.04](13.026641,2.94)(18.026642,2.94)
\psline[linecolor=black, linewidth=0.04, linestyle=dashed, dash=0.17638889cm 0.10583334cm](12.026641,1.34)(14.026641,-1.46)
\psline[linecolor=black, linewidth=0.04](12.026641,-3.06)(17.026642,-3.06)
\psline[linecolor=black, linewidth=0.04](17.826641,-1.86)(17.826641,0.74)
\psline[linecolor=black, linewidth=0.04](16.026642,1.94)(18.026642,0.04)
\psline[linecolor=black, linewidth=0.04](14.026641,3.34)(12.026641,0.44)
\psline[linecolor=black, linewidth=0.04](12.026641,-3.56)(14.026641,-0.56)
\psline[linecolor=black, linewidth=0.04](16.026642,-3.46)(18.12664,-1.06)
\rput[bl](14.268746,-3.5573685){$E_4$}
\rput[bl](12.326641,-2.31){$E_5$}
\rput[bl](12.476641,-0.56){$E_6$}
\rput[bl](12.376641,1.89){$E_7$}
\rput[bl](15.276641,3.09){$E_8$}
\rput[bl](17.12664,1.14){$E_1$}
\rput[bl](17.97664,-0.66){$E_2$}
\rput[bl](16.926641,-2.86){$E_3$}
\rput[bl](14.126641,-4.46){$Y(A_2 + A_5)$}
\psline[linecolor=black, linewidth=0.04, linestyle=dashed, dash=0.17638889cm 0.10583334cm](17.62664,3.24)(16.026642,0.94)
\psline[linecolor=black, linewidth=0.04, linestyle=dashed, dash=0.17638889cm 0.10583334cm](17.526642,-2.36)(16.026642,-0.96)
\rput[bl](17.22664,2.14){$F_2$}
\rput[bl](15.926641,-1.76){$F_3$}
\end{pspicture}
}
    \caption{Contraction from $Y(A_8)$ to $Y(A_2 + A_5)$}
    \label{fig:contractionA8toA2A5}
\end{figure}
We consider $B_{Y(A_8)}:=E_1+\dots+E_8+F_2$.
Let $B_{X(A_8)}$ be the push-forward of $B_{Y(A_8)}$ to $X(A_8)$.
The pair $(Y(A_8),B_{Y(A_8)})$ is Calabi--Yau of index one and coregularity zero. 
The push-forward $B_{X(A_2+A_5)}$ of $B_{Y(A_8)}$ to $X(A_2+A_5)$
has two components. 
Therefore, the pair $(X(A_2+A_5),B_{X(A_2+A_5)})$
and so $(X(A_8),B_{X(A_8)})$ are cluster type pairs
by Theorem~\ref{thm:complete-Gor-dP}.(1) and (5).

We show that $X(A_1+A_7)$ is a cluster type surface. Let $Y(A_1+A_7)$ be the minimal resolution of $X(A_1+A_7)$. 
The surface $Y(A+_1+A_7)$ admits a contraction of a $(-1)$-curve to $Y(A_1+2A_3)$, the minimal resolution of $X(A_1+2A_3)$, as shown in Figure \ref{fig:contractionA1A7toA12A3}.
\begin{figure}[htb]
    \centering
    \psscalebox{0.7 0.7} 
{
\begin{pspicture}(0,-4.46)(19.22,3.42)
\psline[linecolor=black, linewidth=0.04](1.55,2.94)(5.55,2.94)
\psline[linecolor=black, linewidth=0.04](0.55,1.94)(0.55,-2.06)
\psline[linecolor=black, linewidth=0.04](1.55,-3.06)(5.55,-3.06)
\psline[linecolor=black, linewidth=0.04, linestyle=dashed, dash=0.17638889cm 0.10583334cm](6.55,-2.06)(6.55,1.94)
\psline[linecolor=black, linewidth=0.04](4.55,3.34)(6.95,0.94)
\psline[linecolor=black, linewidth=0.04](2.55,3.34)(0.15,0.94)
\psline[linecolor=black, linewidth=0.04](2.55,-3.46)(0.15,-1.06)
\psline[linecolor=black, linewidth=0.04](4.55,-3.46)(6.95,-1.06)
\psline[linecolor=black, linewidth=0.04, linestyle=dashed, dash=0.17638889cm 0.10583334cm](0.15,-0.06)(2.55,-0.06)
\psline[linecolor=black,linewidth=0.04, arrowsize=0.013cm 5.0,arrowlength=2.0,arrowinset=0.6]{->}(8.15,-0.06)(11.15,-0.06)
\rput[bl](2.8921053,-3.5573685){$E_6$}
\rput[bl](0.75,-2.61){$E_5$}
\rput[bl](0.0,-0.76){$E_4$}
\rput[bl](0.9,2.19){$E_3$}
\rput[bl](2.8,3.09){$E_2$}
\rput[bl](5.75,2.24){$E_1$}
\rput[bl](6.8,-0.26){$F_4$}
\rput[bl](5.75,-2.66){$E_7$}
\rput[bl](1.4,0.14){$F_1$}
\rput[bl](2.65,-4.46){$Y(A_1 + A_7)$}
\psbezier[linecolor=black, linewidth=0.04, linestyle=dashed, dash=0.17638889cm 0.10583334cm](4.15,3.34)(3.75,2.94)(3.35,1.04)(4.25,0.54)
\psbezier[linecolor=black, linewidth=0.04](4.25,1.24)(3.75,1.04)(3.35,-0.76)(4.05,-1.46)
\psbezier[linecolor=black, linewidth=0.04, linestyle=dashed, dash=0.17638889cm 0.10583334cm](4.15,-0.86)(3.45,-1.56)(3.55,-3.06)(4.05,-3.56)
\rput[bl](3.85,1.84){$F_2$}
\rput[bl](3.85,-0.26){$E_8$}
\rput[bl](3.95,-2.26){$F_3$}
\psline[linecolor=black, linewidth=0.04](13.55,2.94)(17.55,2.94)
\psline[linecolor=black, linewidth=0.04, linestyle=dashed, dash=0.17638889cm 0.10583334cm](12.55,1.94)(12.55,-2.06)
\psline[linecolor=black, linewidth=0.04](13.55,-3.06)(17.55,-3.06)
\psline[linecolor=black, linewidth=0.04, linestyle=dashed, dash=0.17638889cm 0.10583334cm](18.55,-2.06)(18.55,1.94)
\psline[linecolor=black, linewidth=0.04](16.55,3.34)(18.95,0.94)
\psline[linecolor=black, linewidth=0.04](14.55,3.34)(12.15,0.94)
\psline[linecolor=black, linewidth=0.04](14.55,-3.46)(12.15,-1.06)
\psline[linecolor=black, linewidth=0.04](16.55,-3.46)(18.95,-1.06)
\rput[bl](14.892105,-3.5573685){$E_6$}
\rput[bl](12.75,-2.61){$E_5$}
\rput[bl](12.0,-0.26){$E_4$}
\rput[bl](12.9,2.19){$E_3$}
\rput[bl](14.8,3.09){$E_2$}
\rput[bl](17.75,2.24){$E_1$}
\rput[bl](18.8,-0.26){$F_4$}
\rput[bl](17.75,-2.66){$E_7$}
\psbezier[linecolor=black, linewidth=0.04, linestyle=dashed, dash=0.17638889cm 0.10583334cm](16.15,3.34)(15.75,2.94)(15.35,1.04)(16.25,0.54)
\psbezier[linecolor=black, linewidth=0.04](16.25,1.24)(15.75,1.04)(15.35,-0.76)(16.05,-1.46)
\psbezier[linecolor=black, linewidth=0.04, linestyle=dashed, dash=0.17638889cm 0.10583334cm](16.15,-0.86)(15.45,-1.56)(15.55,-3.06)(16.05,-3.56)
\rput[bl](15.85,1.84){$F_2$}
\rput[bl](15.85,-0.26){$E_8$}
\rput[bl](15.95,-2.26){$F_3$}
\rput[bl](14.55,-4.46){$Y(A_1 + 2A_3)$}
\end{pspicture}
}
    \caption{Contraction from $Y(A_1 + A_7)$ to $Y(A_1 + 2A_3)$}
    \label{fig:contractionA1A7toA12A3}
\end{figure}
We consider $B_{Y(A_1+A_7)}:=E_1+\dots+E_7+F_4$
a $1$-complement of coregularity zero.
The push-forward $B_{X(A_1+A_7)}$ of
$B_{Y(A_1+A_7)}$ to $X(A_1+A_7)$ is a 
nodal curve with an $A_7$ singularity at the node.
The push-forward $B_{X(A_1+2A_3)}$
of $B_{Y(A_1+A_7)}$ has to $X(A_1+2A_3)$
has two prime components.
By Theorem~\ref{thm:complete-Gor-dP}.(1) and (5), 
we conclude that both $X(A_1+A_7)$
and $X(A_1+2A_3)$ are of cluster type. 

Consider the surface $X(2A_4)$. 
Let $Y(2A_4)$ be its minimal resolution.
The smooth surface $Y(2A_4)$ admits a contraction of four $(-1)$-curves to the surface $Y(A_4)$, the minimal resolution of $X(A_4)$, as shown in Figure \ref{fig:contraction2A4toA4}, where $F_2$ has self-intersection zero.
\begin{figure}
    \centering
    \psscalebox{0.7 0.7} 
{
\begin{pspicture}(0,-4.26)(18.531866,3.22)
\psline[linecolor=black, linewidth=0.04](1.25,2.74)(6.25,2.74)
\psline[linecolor=black, linewidth=0.04](0.55,1.64)(0.55,-2.06)
\psline[linecolor=black, linewidth=0.04](4.25,-1.06)(6.25,0.54)
\psline[linecolor=black, linewidth=0.04](4.25,1.54)(6.25,-0.06)
\psline[linecolor=black, linewidth=0.04](2.25,3.14)(0.25,1.14)
\psline[linecolor=black, linewidth=0.04](2.25,-2.96)(0.25,-1.36)
\psline[linecolor=black, linewidth=0.04](4.25,-0.46)(6.25,-2.06)
\rput[bl](6.9921055,-1.0573684){$E_5$}
\rput[bl](0.85,-2.71){$E_1$}
\rput[bl](0.0,-0.96){$E_2$}
\rput[bl](0.7,2.09){$E_3$}
\rput[bl](3.5,2.89){$E_4$}
\rput[bl](5.05,0.94){$E_6$}
\rput[bl](5.3,-0.66){$E_7$}
\rput[bl](4.75,-1.56){$E_8$}
\rput[bl](2.85,-4.26){$Y(2A_4)$}
\psline[linecolor=black, linewidth=0.04, linestyle=dashed, dash=0.17638889cm 0.10583334cm](6.25,3.04)(4.25,0.94)
\psline[linecolor=black, linewidth=0.04, linestyle=dashed, dash=0.17638889cm 0.10583334cm](0.35,0.24)(2.35,1.94)
\rput[bl](2.15,1.34){$F_1$}
\rput[bl](5.55,1.84){$F_2$}
\psline[linecolor=black, linewidth=0.04, linestyle=dashed, dash=0.17638889cm 0.10583334cm](6.25,-1.46)(4.25,-2.96)
\psbezier[linecolor=black, linewidth=0.04](1.05,1.24)(1.65,-1.86)(5.75,1.04)(6.95,0.94)(8.15,0.84)(7.5393524,-2.660337)(6.75,-3.26)(5.9606476,-3.859663)(5.0363183,-2.8634536)(4.25,-2.26)
\rput[bl](5.25,-2.66){$F_3$}
\psline[linecolor=black,linewidth=0.04, arrowsize=0.013cm 5.0,arrowlength=2.0,arrowinset=0.6]{->}(8.25,-0.26)(11.25,-0.26)
\psline[linecolor=black, linewidth=0.04](12.95,2.74)(17.95,2.74)
\psline[linecolor=black, linewidth=0.04](12.25,1.64)(12.25,-2.06)
\psline[linecolor=black, linewidth=0.04](13.95,3.14)(11.95,1.14)
\psline[linecolor=black, linewidth=0.04](13.95,-2.96)(11.95,-1.36)
\rput[bl](12.55,-2.71){$E_1$}
\rput[bl](11.7,-0.96){$E_2$}
\rput[bl](12.4,2.09){$E_3$}
\rput[bl](15.2,2.89){$E_4$}
\rput[bl](14.85,-4.26){$Y(A_4)$}
\psline[linecolor=red, linewidth=0.04](17.35,3.04)(17.35,0.14)
\rput[bl](13.05,1.04){$F_1$}
\rput[bl](17.65,1.54){$F_2$}
\psbezier[linecolor=black, linewidth=0.04](14.75,0.74)(13.85,0.64)(13.85,-1.16)(14.75,-1.26)(15.65,-1.36)(16.25,-0.26)(16.25,-0.26)(16.25,-0.26)(16.85,0.94)(17.75,0.74)(18.65,0.54)(18.85,-0.96)(17.75,-1.26)(16.65,-1.56)(15.95,1.04)(14.75,0.74)
\psbezier[linecolor=black, linewidth=0.04, linestyle=dashed, dash=0.17638889cm 0.10583334cm](12.05,0.64)(12.85,1.24)(14.35,0.64)(14.65,0.14)
\rput[bl](17.45,-1.76){$E_5$}
\end{pspicture}
}
    \caption{Contraction from  $Y(2A_4)$ to $Y(A_4)$}
    \label{fig:contraction2A4toA4}
\end{figure}
We consider the boundary divisor
$B_{Y(2A_4)}:=E_5+E_6+E_7+E_8+F_3$. 
The pair $(Y(2A_4),B_{Y(2A_4)})$
is a Calabi--Yau pair of index one and
coregularity zero.
The push-forward $B_{X(2A_4)}$ of $B_{Y(2A_4)}$ 
to $X(2A_4)$ is a nodal curve with an $A_4$ singularity at the node.
Therefore, the pair $(X(2A_4),B_{X(2A_4)})$ is a cluster type pair by Theorem~\ref{thm:complete-Gor-dP}.(3).
Let $B_{X(A_4)}$ be the push-forward of $B_{Y(2A_4)}$ to $X(A_4)$. 
Then, $B_{X(A_4)}$ is a nodal curve contained in the smooth locus of $X(A_4)$ and so $(X(A_4),B_{X(A_4)})$ is of cluster type 
by Theorem~\ref{thm:complete-Gor-dP}.(2). 

Finally, we prove that $X(A_1+A_2+A_5)$ is a cluster type surface. 
Let $B$ be a nodal sextic in $\pp(1,2,3)$ that does not pass through the singular points. 
Then, the pair $(\pp(1,2,3),B)$
is a Calabi--Yau pair of index one and coregularity zero. This pair is of cluster type
by Theorem~\ref{thm:complete-Gor-dP}. 
Note that $B^2=6$.
We blow-up the node of $B$ 
and extract the $(-1)$-curve $E$. Then, we inductively blow-up
the intersection of the strict transform of $B$ and the previous $(-1)$-curve, until the self-intersection of the strict transform of $B$ equals $-2$. 
Afterwards, we blow-down all the $(-2)$-curves contained in the pull-back of $B$ to this blow-up. 
By doing so, we obtain a Calabi--Yau pair 
$(X(A_1+A_2+A_5),\Gamma)$ where $\Gamma$ is a nodal curve with an $A_5$-singularity at its node. 
Therefore, by Theorem~\ref{thm:complete-Gor-dP}.(5), we conclude that $X(A_1+A_2+A_5)$ is a cluster type surface.
\end{proof}

\section{Examples and questions} 
\label{sec:ex-quest}

In this section, we give some examples and pose some questions for further research. 
Example~\ref{ex:comp2-not-rat}, shows that pairs of complexity two may not be rational.
Example~\ref{ex:non-ct}
and Example~\ref{ex:ct} give some explicit applications of Theorem~\ref{thm:ct-standard-model-Pic-2} and
Theorem~\ref{thm:ct-standard-model-Pic-1}.

\begin{example}\label{ex:comp2-not-rat}
{\em
In this example, we show that there are Calabi--Yau pairs of index one, coregularity zero, and complexity two which are not rational.
Let $X_3\subset \pp^4$ be a smooth cubic $3$-fold.
Let $H$ be a general hyperplane in $\pp^3$ and $F$ be its restriction to $X_3$. Therefore, $F$ is a smooth cubic surface.
Therefore, the smooth surface $F$ admits a $1$-complement $B_F$ of coregularity zero.
Since the pair $(X_3,F)$ is log Fano, we can lift the complement $B_F$ to a $1$-complement $(X_3,F+B)$ of index one and coregularity zero. 
Note that $c(X_3,F+B)=2$.
Indeed, if $B$ has at least two prime components, then by Theorem~\ref{thm:comp1-ct} we would conclude that $X_3$ is a rational variety leading to a contradiction.
Hence, the Calabi--Yau pair $(X_3,F+B)$ has coregularity zero, index one, and complexity two.
}
\end{example}

\begin{example}\label{ex:non-ct}
{\em 
In this example, we show how to use Theorem~\ref{thm:ct-standard-model-Pic-1} or Theorem~\ref{thm:ct-standard-model-Pic-2} to detect whether a Calabi--Yau pair is not of cluster type. 
We consider the projective space $\pp^3$ with variables $[t:x:y:z]$.
Consider the Calabi--Yau pair
\[
(\pp^3,H+C_3) 
\]
where 
\[
H:=\{t=0\}
\quad 
\text{ and }
\quad 
C_3:=\{x(x^2+y^2+z^2)+2tyz=0\}.
\]
The pair $(\pp^3,H+C_3)$ is Calabi--Yau of index one and coregularity zero. 
The surface $C_3$ is smooth along $H$, therefore, the general element of the pencil 
\[
D_\lambda :=
\{ 
\lambda t^3 + (1-\lambda)(
x(x^2+y^2+z^2)+2tyz)=0
\}  
\]
is a smooth cubic surface
containing the reducible curve
$H\cap C_3$.
By blowing up the intersection of $D_0$ and $D_1$ we obtain a fibration $\pi\colon X\rightarrow \pp^1$. 
We can blow down the reducible fibers of $\pi$ to obtain a fibration $\psi\colon X'\rightarrow \pp^1$ of relative Picard rank two. 
Let $(X',H'+C_3')$ be the pull-back of $(\pp^3,H+C_3)$ to $X'$.
Then, the reduced fiber over $\{0\}$ of $\psi$ equals $H'$
while the reduced fiber over $\{\infty\}$ of $\psi$ equals $C_3'$.
Therefore, the crepant fibration
\[
\psi\colon (X',H'+C_3') \rightarrow (\pp^1,\{0\}+\{\infty\}) 
\]
is a standard model
of relative Picard rank two
over a toric base.
The general fiber of $\psi$ is isomorphic to $(D_\lambda,C)$
where $D_\lambda$ is a smooth cubic surface and $C$ is a curve with two components $C_1+C_2$ which are smooth rational curves.
We conclude that $C_1^2+C_2^2=-1$.
Since a cubic surface contains no smooth rational curves with $C^2\leq -2$, we conclude that
$C_1^2=0$ and $C_2^2=-1$, 
up to possibly swapping the curves.
By Lemma~\ref{lem:ct-under-bir-comp2} and Theorem~\ref{thm:ct-standard-model-Pic-2}, we conclude that $(\pp^3,H+C_3)$ is not of cluster type. Equivalently, we can contract $C_2$ over $\pp^1$ to obtain a new crepant fibration
\[
\psi_0 \colon (X_0,H_0+C_{3,0})\rightarrow (\pp^1,\{0\}+\{\infty\})
\]
which is a standard model of relative Picard rank one. 
The volume of the general fiber of $\psi_0$ is $4$.
Then, by Lemma~\ref{lem:ct-under-bir-comp2} and Theorem~\ref{thm:ct-standard-model-Pic-1}, we conclude that $(\pp^3,H+C_3)$ is not of cluster type. 
}    
\end{example}

\begin{example}\label{ex:ct}
{\em 
In this example, we show how to use Theorem~\ref{thm:ct-standard-model-Pic-2} or Theorem~\ref{thm:ct-standard-model-Pic-1} to detect whether a Calabi--Yau pair is of cluster type.
We consider the projective space $\pp^3$ with variables $[t:x:y:z]$.
Consider the Calabi--Yau pair
\[
(\pp^3,H+C_3), 
\]
where
\[
H:=\{t=0\} 
\quad \text{ and }
\quad 
C_3:=\{y^2z-x^2(x+z)+zt^2=0\}.
\]
The pair $(\pp^3,H+C_3)$ is Calabi--Yau of index one and coregularity zero. Indeed, the intersection $H\cap C_3$ is a nodal cubic in $\pp^2$.
The general element of the pencil
\[
D_\lambda:=
\{ 
\lambda t^3 + (1-\lambda) 
(y^2z -x^2(x+z) +zt^2)=0 
\} 
\]
is a degenerate cubic surface with a single $A_1$ singularity at $[0:0:0:1]$.
By blowing up the intersection of $D_0$ and $D_1$ we obtain a fibration $X\rightarrow \pp^1$.
We can blow down the reducible fibers of $\pi$ to obtain a fibration $\psi\colon X'\rightarrow \pp^1$ of relative Picard rank one.
Let $(X',H'+C'_3)$ be the pull-back of $(\pp^3,H+C_3)$ to $X'$.
Then, the reduced fiber over $\{0\}$ of $\psi$ equals $H'$
while the reduced fiber over $\{\infty\}$ of $\psi$ equals $C'_3$.
Therefore, the crepant fibration 
\[
\psi\colon (X',H'+C'_3)\rightarrow (\pp^1,\{0\}+\{\infty\})
\]
is a standard model 
of relative Picard rank one
over a toric base.
The general fiber of $\psi$ is isomorphic to $(D_\lambda,C_\lambda)$ where $C_\lambda$ is a nodal curve
with a node at the $A_1$ singularity of $D_\lambda$.
The general fiber of $\psi$ has volume $3$ so $C_\lambda^2=3$.
However, we cannot apply Theorem~\ref{thm:ct-standard-model-Pic-1} as $C_\lambda$ is not contained in the smooth locus of $D_\lambda$.
Let $X''\rightarrow \pp^1$ be the family obtained by blowing up the one-dimensional log canonical center
of $(X',H'+C'_3)$ over $\pp^1$.
By doing so, we resolve the general fiber of $\psi$.
We obtain a new crepant fibration
\begin{equation}\label{eq:3rd-cf}
\pi\colon (X'',H''+E''+C''_3)\rightarrow (\pp^1,\{0\}+\{\infty\}) 
\end{equation} 
whose general fiber is the minimal resolution $(F,B_F)$ of $(D_\lambda,C_\lambda)$.
The divisor $B_F$ has two components;
the strict transform of $C_\lambda$ and $E|_F$.
In addition, crepant fibration~\eqref{eq:3rd-cf} 
is a standard model of relative Picard rank two over a toric base.
As $F$ is smooth, the divisor $B_F$ is contained in the smooth locus. 
Thus, we may apply Theorem~\ref{thm:ct-standard-model-Pic-2}.
Note that the strict transform of $C_\lambda$ in $F$ has self-intersection $1$.
Thus, we conclude that the pair $(X'',H''+E''+C''_3)$
is cluster type. 
By Lemma~\ref{lem:ct-under-bir-comp2}, we conclude that $(\pp^3,H+C_3)$ is of cluster type.
}
\end{example}

In the following examples, we show that the condition on the base 
of Theorem~\ref{thm:invariance-coreg} is necessary.

\begin{example}
{\em 
In this example, we show that the conclusion of Theorem~\ref{thm:invariance-coreg} is not valid if the base of the crepant fibration is not a toric Calabi--Yau pair.

Let $X:=\pp^1\times \pp^1$.
Let $p\in X$
and consider $F\in |\mathcal{O}(0,1)|$ passing through $p$.
Let $D_1$ and $D_2$ be
two general elements of $|\mathcal{O}(2,1)|$ that pass through $p$ as well. 
Then, the pair
\begin{equation}\label{ex:not-inv-coreg}
\left(X,F + \frac{1}{2}D_1 + \frac{1}{2}D_2\right)     
\end{equation}
is a Calabi--Yau pair.
Consider $\pi_2\colon X\rightarrow \pp^1$ to be the projection onto the second component
and let $q$ be the image of $p$.
Then, we have
a crepant fibration 
\begin{equation}\label{eq:fibration-crepant-coreg-inv} 
\pi_1\colon 
\left(X,
F + \frac{1}{2}D_1+\frac{1}{2}D_2 
\right) 
\rightarrow 
\left( 
\pp^1,
\frac{1}{2}(s_1+s_2)+q 
\right) 
\end{equation} 
where $s_1,s_2\in \pp^1$ are points different from $q$. 
The pair $(\pp^1,(\frac{1}{2}(s_1+s_2)+q)$ is not toric although the ambient space is toric.
The coregularity of the Calabi--Yau pair~\eqref{ex:not-inv-coreg} is zero as $p$ is contained in $F$ and the sum of the multiplicities of the curves passing through $p$ is $2$. 
On the other hand, the general fiber of the crepant fibration~\eqref{eq:fibration-crepant-coreg-inv} is isomorphic to $(\pp^1,\frac{1}{2}(\lambda_1+\dots+\lambda_4))$ so its coregularity is one. 
This shows that the toricity of the Calabi--Yau pair in the base of the statement of Theorem~\ref{thm:invariance-coreg} is indeed necessary. 
}
\end{example}

Now, we turn to give some examples of surface pairs
$(X,B)$ and we analyze whether they are cluster type or not. 

\begin{example}\label{A-ct-nct}
{\em 
We give examples of two Calabi--Yau surfaces
$(X_1,B_1)$ and $(X_2,B_2)$, 
such that both $X_1$ and $X_2$ only have
$A$-type singularities,
$X_1\setminus B_1$ and $X_2\setminus B_2$
have the same singularities, 
and $(X_1,B_1)$ is of cluster type 
while $(X_2,B_2)$ is not of cluster type. 

Let $C_3\subset \pp^2$ be a nodal cubic curve. 
Then, the pair $(\pp^2,C_3)$ is of cluster type.  
We take two points $p,q\in C_3$ that are not in the node of $C_3$. 
We blow-up $p$ and then we blow-up the intersection of the exceptional divisor with the strict transform of $C_3$. 
By doing so, we introduce a $(-2)$-curve that we may blow-down to get a $A_1$-singularity.
Analogously, we can blow-up $q$ and then blow-up the intersection of the last exceptional divisor with the strict transform of $C_3$, five times. By doing so, we introduce five $(-2)$-curves that we may blow-down to get an $A_5$ singularity. Let $X_1$ be such a surface 
and let $(X_1,B_1)$ be the pull-back of $(\pp^2,C_3)$ to $X_1$.
Then, we have two singularities in $X_1\setminus B_1$, namely an $A_1$ singularity and an $A_5$-singularity. As $(\pp^2,C_3)$ is cluster type, we conclude that $(X_1,B_1)$ 
is of cluster type. 

On the other hand, consider $X_2:=X(A_1+A_2+A_5)$ and let $Y(A_1+A_2+A_5)$ be its minimal resolution as in Figure \ref{fig:minResA1A2A5}, 
\begin{figure}[hbt]
    \centering
    \psscalebox{0.7 0.7} 
{
\begin{pspicture}(0,-4.935)(7.1266413,3.395)
\psline[linecolor=black, linewidth=0.04](1.5,2.4277096)(5.5,2.4277096)
\psline[linecolor=black, linewidth=0.04](0.5,1.4277097)(0.5,-2.5722904)
\psline[linecolor=black, linewidth=0.04, linestyle=dashed, dash=0.17638889cm 0.10583334cm](6.5,-2.5722904)(6.5,1.4277097)
\psline[linecolor=black, linewidth=0.04](0.1,0.8277097)(2.5,2.8277097)
\psline[linecolor=black, linewidth=0.04](4.5,2.8277097)(6.9,0.8277097)
\psline[linecolor=black, linewidth=0.04](0.1,-1.9722904)(2.5,-3.9722903)
\psline[linecolor=black, linewidth=0.04](4.5,-3.9722903)(6.8,-2.0722904)
\psline[linecolor=black, linewidth=0.04, linestyle=dashed, dash=0.17638889cm 0.10583334cm](0.1,-0.97229034)(2.1,0.8277097)(2.1,0.8277097)
\psline[linecolor=black, linewidth=0.04](3.5,1.0277096)(3.5,-2.5722904)
\psbezier[linecolor=black, linewidth=0.04](0.8,0.12770966)(1.8,0.52770966)(3.8,-0.8722904)(4.9,-1.5722903442382812)(6.0,-2.2722902)(6.3,-1.8722904)(7.1,-0.6722903)
\psbezier[linecolor=black, linewidth=0.04, linestyle=dashed, dash=0.17638889cm 0.10583334cm](3.1,2.8277097)(2.5,2.0277097)(3.2,0.42770967)(3.9,0.12770965576171875)
\psbezier[linecolor=black, linewidth=0.04, linestyle=dashed, dash=0.17638889cm 0.10583334cm](2.9,-2.8722904)(3.3,-2.1722903)(4.7,-1.4722904)(5.4,-1.4722903442382813)
\rput[bl](1.0,-3.4722903){$E_1$}
\rput[bl](0.0,0.027709655){$E_2$}
\rput[bl](0.8,1.8277097){$E_3$}
\rput[bl](3.9,2.5277097){$E_4$}
\rput[bl](5.7,1.8277097){$E_5$}
\rput[bl](5.6,-3.4722903){$E_6$}
\rput[bl](5.2,-2.2722902){$E_7$}
\rput[bl](2.9,-1.6722903){$E_8$}
\rput[bl](0.9,-0.6722903){$F_1$}
\rput[bl](3.0,1.6277096){$F_2$}
\rput[bl](4.0,-2.3722904){$F_3$}
\rput[bl](6.7,0.12770966){$F_4$}
\rput[bl](2.1,-4.935){$Y(A_1 + A_2 + A_5)$}
\end{pspicture}
}
    \caption{Minimal resolution $Y(A_1 + A_2 + A_5)$ of $X(A_1 + A_2 + A_5)$}
    \label{fig:minResA1A2A5}
\end{figure}
where the solid lines represent $(-2)$-curves
and the dotted lines represent $(-1)$-curves. 

Consider the boundary divisor 
$B_{Y(A_1+A_2+A_5)}:=E_7+E_8+F_3$. 
Let $X_2$ be the surface obtained by contracting $F_3$ and then the image of $E_8$. 
Then, the surface $X_2$ is a Gorenstein del Pezzo surface of rank one with two singularities $A_1$ and $A_5$.
Let $B_2$ be the image of $E_7$ in $X_2$. 
Then, the pair $(X_2,B_2)$ is a Calabi--Yau surface and $B_2$ is a nodal curve.
Furthermore, $B_2$ is contained in the smooth locus of $X_2$. 
By Theorem~\ref{thm:complete-Gor-dP}.(2), 
we conclude that $(X_2,B_2)$ is not of cluster type.

We conclude that $(X_1,B_1)$ is of cluster type 
while $(X_2,B_2)$ is not of cluster type. 
However, they both have the same singularities along the complement of the boundary divisor. 
Note that $\vol(X_1)=1$ and $\vol(X_2)=3$
while $\rho(X_1)=3$ and $\rho(X_2)=1$. 
}
\end{example}

We finish the article by posing two questions. Calabi--Yau pairs of index one and complexity one are cluster type.
In this article, we explain that understanding which pairs of complexity two are cluster type
is a decidable problem. 
However, we did not attempt to understand the log rationality of pairs of complexity two.
This problem is probably more subtle and related to the rationality of conic fibrations. 
Nevertheless, we expect a positive answer to the following question. 

\begin{question}
Let $X$ be a rational variety of dimension $n$.
Let $(X,B)$ be a Calabi--Yau pair of index one, coregularity zero, and complexity two.
Is the pair $(X,B)$ log rational? i.e., is $(X,B)$ crepant birational to $(\pp^n,\Sigma^n)$? 
\end{question} 

The previous question is motivated by~\cite[Conjecture 1.1]{EFM24} and the work relating complexity and dual complexes due to the first author and Mauri~\cite{MM24}. 
Although Calabi--Yau pairs of complexity two may be rather complicated. We expect that pairs of complexity strictly less than two are approachable and may be classified in the future.

\begin{question}
Is it possible to give a characterization of Calabi--Yau pairs $(X,B)$, with arbitrary index, and $c(X,B)<2$? 
\end{question}

So far, only the case with $c(X,B)=0$ is fully understood by the work of Brown, McKernan, Svaldi, and Zong~\cite{BFMS20} and the work of Enwright and Figueroa~\cite{EF24}.
In the former, they prove that $X$ is a toric variety 
and in the latter, they prove that $B$ is a convex weighted combination of different toric boundaries of $X$ (with respect to different maximal tori of ${\rm Aut}(X)$). 
In the case of complexity strictly less than one a similar statement is expected, while in the case of complexity strictly less than two some new geometric behavior may occur.

\bibliographystyle{habbvr}
\bibliography{bib}
\end{document}